\theoremstyle{plain}
\newtheorem{theorem}{Theorem}[section]
\newtheorem{proposition}[theorem]{Proposition}
\newtheorem{lemma}[theorem]{Lemma}
\newtheorem{corollary}[theorem]{Corollary}
\newtheorem{conjecture}[theorem]{Conjecture}
\newtheorem*{theorem*}{Theorem}
\theoremstyle{definition}
\newtheorem{definition}[theorem]{Definition}
\newtheorem{example}[theorem]{Example}
\newtheorem{remark}[theorem]{Remark}
\newtheorem{question}[theorem]{Question}
\newtheorem{notation}[theorem]{Notation}
\newcommand{\KM}{\mathfrak{K}(M)}
\newcommand{\res}[1]{\!\downarrow_{#1}}
\newcommand{\leftind}[2]{{\vphantom{#2}}_{#1}{#2}}
\newcommand{\leftindexp}[3]{{\vphantom{#2}}_{#1}^{#2}{#3}}
\DeclareMathOperator{\Ker}{\mathsf{Ker}}
\DeclareMathOperator{\Img}{\mathsf{Im}}
\DeclareMathOperator{\Coker}{\mathsf{Coker}}
\DeclareMathOperator{\Rad}{\mathsf{Rad}}
\DeclareMathOperator{\fgmod}{\mathsf{mod}}
\DeclareMathOperator{\rank}{rank}
\DeclareMathOperator{\cJt}{\mathsf{cJt}}
\DeclareMathOperator{\vect}{\mathsf{vec}}
\DeclareMathOperator{\Proj}{\mathsf{Proj}}
\DeclareMathOperator{\coh}{\mathsf{coh}}
\DeclareMathOperator{\Fun}{\mathsf{Fun}}
\DeclareMathOperator{\Tor}{\mathsf{Tor}}
\DeclareMathOperator{\JType}{\mathsf{JType}}
\begin{document}

\title[Constant Jordan type, pullbacks of bundles and generic kernels]
{Modules of constant Jordan type, pullbacks of bundles and generic
kernel filtrations}
\author{Shawn Baland and Kenneth Chan}
\address{Department of Mathematics, University of Washington, Seattle,
Washington 98185}

\begin{abstract}
Let $kE$ denote the group algebra of an elementary abelian $p$-group
of rank $r$ over an algebraically closed field of characteristic $p$.
We investigate the functors $\mathcal{F}_i$ from $kE$-modules of
constant Jordan type to vector bundles on $\mathbb{P}^{r-1}(k)$,
constructed by Benson and Pevtsova. For a $kE$-module $M$ of constant
Jordan type, we show that restricting the sheaf $\mathcal{F}_i(M)$ to
a dimension $s-1$ linear subvariety of $\mathbb{P}^{r-1}(k)$ is equivalent
to restricting $M$ along a corresponding rank $s$ shifted subgroup of $kE$
and then applying $\mathcal{F}_i$.

In the case $r=2$, we examine the generic kernel filtration of $M$
in order to show that $\mathcal{F}_i(M)$ may be computed on certain
subquotients of $M$ whose Loewy lengths are bounded in terms of $i$.
More precise information is obtained by applying similar techniques to
the $n$th power generic kernel filtration of $M$. The latter approach
also allows us to generalise our results to higher ranks $r$.
\end{abstract}

\maketitle

\tableofcontents

\section{Introduction}

The goal of this paper is to further investigate a curious functorial
relationship between the category of finitely generated $kE$-modules
and the category of coherent sheaves on the projective space
$\mathbb{P}^{r-1}(k)$, where $E$ is an elementary abelian $p$-group of
rank $r$ and $k$ is an algebraically closed field of characteristic $p$.
Specifically, we wish to better understand the functors
\[
\xymatrix{
\mathcal{F}_i\colon\fgmod(kE) \ar@{>}[r] & \coh(\mathbb{P}^{r-1}(k)),
\qquad 1\leq i\leq p
}
\]
introduced by Benson and Pevtsova \cite{Benson/Pevtsova:vb}.
Interest in the functors $\mathcal{F}_i$ originated in the study
of $kE$-modules of constant Jordan type, which were defined by
Carlson, Friedlander and Pevtsova
\cite{Carlson/Friedlander/Pevtsova:2008a}. Denoting the subcategory of
modules of constant Jordan type by $\cJt(kE)$, Benson and Pevtsova
showed that the functors $\mathcal{F}_i$ descend to functors
\[
\xymatrix{
\mathcal{F}_i\colon\cJt(kE) \ar@{>}[r] & \vect(\mathbb{P}^{r-1}(k)),
}
\]
where $\vect(\mathbb{P}^{r-1}(k))$ is the category of vector bundles
on $\mathbb{P}^{r-1}(k)$. We remark that neither of the latter two
categories is well understood. Whereas the study of modules of constant
Jordan type is a relatively new enterprise, the attempt to understand
what sorts of vector bundles can live on $\mathbb{P}^{r-1}(k)$ has been
ongoing since the advent of modern algebraic geometry, and with limited
success. Accordingly, a thorough understanding of the functors
$\mathcal{F}_i$ should be of interest to representation theorists and
algebraic geometers alike.

In this direction, our aim is to further establish some sort of
dictionary between modules of constant Jordan type and vector bundles
on $\mathbb{P}^{r-1}(k)$ via the functors $\mathcal{F}_i$. For
example, one of the common techniques of the algebraic geometer is that
of restricting a vector bundle on $\mathbb{P}^{r-1}(k)$ to a line
$L$ in $\mathbb{P}^{r-1}(k)$ in order to compute its so called
`splitting type'. Any such closed immersion
$L\subseteq\mathbb{P}^{r-1}(k)$ is obtained by applying the $\Proj$
functor to a surjective homogeneous ring homomorphism
\[
\xymatrix{
k[Y_1,\ldots,Y_r] \ar@{>}[r] & k[Z_1,Z_2].
}
\]
In Section \ref{pullbacksection} we generalise this situation a bit and
show that any surjective ring homomorphism of the form
\[
\xymatrix{
k[Y_1,\ldots,Y_r] \ar@{>}[r] & k[Z_1,\ldots,Z_s],\qquad s\leq r
}
\]
arises naturally from what we call a
\emph{homogeneously embedded $s$-shifted subgroup} $kE'$ of $kE$.
Our main result related to this is that, under the functors
$\mathcal{F}_i$, pulling back along such a closed immersion
$\mathbb{P}^{s-1}(k)\hookrightarrow\mathbb{P}^{r-1}(k)$ corresponds
to restricting scalars along the inclusion $kE'\hookrightarrow kE$.
Specifically, we obtain the following.

\begin{theorem*}
Let $kE'$ be a homogeneously embedded $s$-shifted subgroup of $kE$
and let
\[
\xymatrix{
f\colon\mathbb{P}^{s-1}(k) \ar@{>}[r] & \mathbb{P}^{r-1}(k)
}
\]
be the corresponding closed immersion. If $M$ is a $kE$-module of
constant Jordan type, then for all $1\leq i\leq p$ we have
$f^*\mathcal{F}_i(M)\cong\mathcal{F}_i(M\res{kE'})$.
\end{theorem*}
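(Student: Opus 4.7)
The plan is to unpack the Benson--Pevtsova construction of $\mathcal{F}_i$ in terms of its defining operator, identify this operator's pullback along $f$ with the corresponding operator for $M\res{kE'}$, and then use constant Jordan type to ensure that the kernel/image operations involved commute with $f^*$.

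First I would recall the setup. Choosing generators $g_1,\ldots,g_r$ of $E$ and writing $X_l=g_l-1$, one obtains $kE\cong k[X_1,\ldots,X_r]/(X_l^p)$ together with an $\mathcal{O}_{\mathbb{P}^{r-1}(k)}$-linear ``universal'' operator
\[
\theta_M\colon M\otimes_k\mathcal{O}_{\mathbb{P}^{r-1}(k)}(-1)\longrightarrow M\otimes_k\mathcal{O}_{\mathbb{P}^{r-1}(k)},\qquad \theta_M=\sum_{l=1}^{r}X_l\otimes Y_l,
\]
where $Y_1,\ldots,Y_r$ are the homogeneous coordinates. The sheaves $\mathcal{F}_i(M)$ are certain subquotients built out of the kernels and images of the twisted iterates $\theta_M^j$; when $M$ has constant Jordan type, each $\theta_M^j$ has constant rank at every closed point, so all of these kernels, images and subquotients are vector bundles.

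Next I would translate the definition of a homogeneously embedded $s$-shifted subgroup into linear-algebraic data. By assumption $kE'$ is generated by elements of the form $u_j=\sum_{l=1}^{r}c_{lj}X_l$ for $j=1,\ldots,s$, with $(c_{lj})$ an $r\times s$ matrix of rank $s$; the associated surjection $k[Y_1,\ldots,Y_r]\twoheadrightarrow k[Z_1,\ldots,Z_s]$ is $Y_l\mapsto\sum_{j}c_{lj}Z_j$, which dually yields the closed immersion $f$. Under the identification $f^*\mathcal{O}_{\mathbb{P}^{r-1}(k)}(1)\cong\mathcal{O}_{\mathbb{P}^{s-1}(k)}(1)$, applying $f^*$ to the universal operator gives
\[
f^*\theta_M=\sum_{l=1}^{r}X_l\otimes\Bigl(\sum_{j=1}^{s}c_{lj}Z_j\Bigr)=\sum_{j=1}^{s}u_j\otimes Z_j,
\]
and the right-hand side is precisely the operator that Benson and Pevtsova use to construct $\mathcal{F}_i(M\res{kE'})$ on $\mathbb{P}^{s-1}(k)$.

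Finally I would argue that $f^*$ commutes with the subquotient operations defining $\mathcal{F}_i$. The point is that $M$ being of constant Jordan type forces each $\Ker(\theta_M^j)$ and $\Img(\theta_M^j)$ to be a subbundle, so the defining short exact sequences are sequences of vector bundles and are preserved by the (exact on such sequences) functor $f^*$. A standard rank-variety-style argument shows that the Jordan type of $M\res{kE'}$ at a point $z\in\mathbb{P}^{s-1}(k)$ agrees with the Jordan type of $M$ at $f(z)$, so $M\res{kE'}$ also has constant Jordan type and the analogous sequences on $\mathbb{P}^{s-1}(k)$ are likewise locally split. Combining these ingredients with the identification $f^*\theta_M=\theta_{M\res{kE'}}$ above then delivers the required isomorphism $f^*\mathcal{F}_i(M)\cong\mathcal{F}_i(M\res{kE'})$. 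I expect the technical heart of the proof to lie in this last step: checking that $f^*$ genuinely commutes with the formation of $\Ker$, $\Img$ and the relevant quotients of the $\theta_M^j$, which is exactly where the constant-Jordan-type hypothesis becomes indispensable; everything else is a fairly mechanical unwinding of definitions and coordinate changes.
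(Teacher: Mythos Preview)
Your proposal is correct and follows essentially the same route as the paper: first identify $f^*\theta_M$ with $\theta_{M\res{kE'}}$ via the coordinate computation (the paper's Proposition~3.1 and Corollary~3.2), then use constant Jordan type to guarantee that the relevant cokernels/images are locally free so that $f^*$ preserves the short exact sequences defining $\mathcal{F}_i$ (the paper isolates this as Lemma~3.3 and applies it three times). The only cosmetic difference is that the paper never invokes constant Jordan type of $M\res{kE'}$ directly---it works entirely with local freeness on $\mathbb{P}^{r-1}(k)$ and pulls back---whereas you mention it as an auxiliary step; both viewpoints are equivalent here.
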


Here, the notation has been abused slightly so that $\mathcal{F}_i$
denotes both functors on $\fgmod(kE)$ and $\fgmod(kE')$, respectively.

In connection with the long term goal of studying $kE$-modules of
constant Jordan type by looking at splitting types of vector bundles
on $\mathbb{P}^{r-1}(k)$, the remainder of this paper will be dedicated to
better understanding the behaviour of modules of constant Jordan type
in the case $r=2$, so that the bundles $\mathcal{F}_i(M)$ live over
$\mathbb{P}^1(k)$. The representation theory of $kE$ in the case $r=2$
was closely investigated by Carlson, Friedlander and Suslin
\cite{Carlson/Friedlander/Suslin:2011a}. In that paper, the authors
constructed an interesting functorial invariant of a $kE$-module called
the \emph{generic kernel}. If $M$ is any finite dimensional $kE$-module in
rank two, then its generic kernel $\KM$ can be characterised as the
largest submodule of $M$ having the so called `equal images property'.
(See Definition \ref{eipdefinition}.) The generic kernel gives rise to
a filtration of $M$ whose terms are $J^i\KM$, where $J=J(kE)$
denotes the Jacobson radical of $kE$, and for $j>0$, $J^{-j}\KM$ denotes
the collection of elements $m\in M$ for which $J^jm\subseteq\KM$.
In the first author's \cite{Baland:2012a}, this was called the
\emph{generic kernel filtration}.

If $M$ has something called the `constant rank' property, which is a
relatively mild condition, then the generic kernel filtration of $M$ has
some interesting features. For example, its filtered quotients are
semisimple, and there is a well behaved duality theory.
(See Lemma \ref{generickernelduality}.)
Our results here will show further that for a $kE$-module $M$ of constant
Jordan type, the various layers of the generic kernel filtration allow
one to compute the vector bundles $\mathcal{F}_i(M)$ on what are
generally much smaller subquotients of $M$. In their fullest,
our results establish the following.

\begin{theorem*}
If $r=2$ and $M$ is a $kE$-module of constant Jordan type, then for
each $1\leq i\leq p$ we have
\[
\mathcal{F}_i(M)\cong\mathcal{F}_i(J^{-i}\KM/J^{i+1}\KM).
\]
\end{theorem*}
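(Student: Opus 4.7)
The proof strategy will be to split the claimed isomorphism into two successive steps,
\[
\mathcal{F}_i(M)\;\cong\;\mathcal{F}_i(M/J^{i+1}\KM)\;\cong\;\mathcal{F}_i(J^{-i}\KM/J^{i+1}\KM),
\]
each amounting to the claim that $\mathcal{F}_i$ is insensitive to a specific piece of the generic kernel filtration: the first step trims off the deeply radical submodule $J^{i+1}\KM$ from the bottom of $M$, while the second trims off the quotient $M/J^{-i}\KM$ from the top.

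For the first step I would apply $\mathcal{F}_i$ to the short exact sequence $0\to J^{i+1}\KM\to M\to M/J^{i+1}\KM\to 0$ and verify that the inclusion induces an isomorphism on $\mathcal{F}_i$. Fibrewise at a point $[\alpha]\in\mathbb{P}^1(k)$, this amounts to showing that no element of $J^{i+1}\KM$ contributes a Jordan block of the length selected by $\mathcal{F}_i$ to the universal nilpotent operator $\theta = Y_1\otimes x_1 + Y_2\otimes x_2$ acting on $M\otimes k[Y_1,Y_2]$. The key input is that $\KM$ has the equal images property (Definition~\ref{eipdefinition}), which forces $J^{i+1}\KM$ to lie inside $\Img(\theta^{i+1})$ at every closed point; combined with the description of $\mathcal{F}_i$ as a subquotient of iterated $\Ker$ and $\Img$ of $\theta$, this rules out any contribution to $\mathcal{F}_i$. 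For the second step I would appeal to the duality of generic kernel filtrations recorded in Lemma~\ref{generickernelduality}: this exchanges the submodule $J^{-i}\KM$ of $M$ with a bottom piece of $M^*$, so that, combined with the compatibility of the $\mathcal{F}_i$ with $k$-linear duality of $kE$-modules (up to Serre-dualising the resulting bundle on $\mathbb{P}^1(k)$), the second isomorphism reduces to the first applied to $M^*$.

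The main obstacle will be the base vanishing $\mathcal{F}_i(J^{i+1}\KM)=0$ underpinning the first step. One has to translate the purely representation-theoretic fact that elements of $J^{i+1}\KM$ are generic images of depth at least $i+1$ into a uniform fibrewise assertion about the relevant subquotient of kernels and images of $\theta$ at \emph{every} closed point of $\mathbb{P}^1(k)$, without appealing to a splitting of $M$ as a direct sum of string-type modules. The natural route is to pick a spanning set of $J^{i+1}\KM$ consisting of elements of the form $\alpha_1\cdots\alpha_{i+1}\cdot x$ with $x\in\KM$ and $\alpha_j\in J$, and to use the equal images property to rewrite $\alpha_1\cdots\alpha_{i+1}\cdot x$ generically as $\theta^{i+1}(y)$ for some $y$. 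Once this is secured, the upper truncation follows formally by duality, and splicing the two isomorphisms yields the theorem.
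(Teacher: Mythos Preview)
Your two-step outline --- kill the bottom piece $J^{i+1}\KM$, then use duality to kill the top piece --- is the same scaffolding as the paper's, only with the steps transposed: the paper first restricts to the submodule $J^{-i}\KM$ (Proposition~\ref{Fionsubmodule}) and then invokes Lemma~\ref{generickernelduality} and Lemma~\ref{vectorbundlesofduals} to deduce the quotient step. The submodule-first order is not an accident. For a submodule $N\subseteq M$ one has $\Ker(X_\alpha,N)=\Ker(X_\alpha,M)\cap N$, and Lemma~\ref{kernelcapimagelemma} upgrades this to a literal equality
\[
\Ker(X_\alpha,N)\cap\Img(X_\alpha^i,N)=\Ker(X_\alpha,M)\cap\Img(X_\alpha^i,M)
\]
of subspaces of $M$, which plugs directly into Proposition~\ref{fibresofinclusion}. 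The analogous comparison for a quotient involves preimages $X_\alpha^{-1}(J^{i+1}\KM)$ and is considerably messier to carry out directly.

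There is a genuine gap in your first step. The ``base vanishing'' $\mathcal{F}_i(J^{i+1}\KM)=0$ you single out is false in general: take $p\geq 3$, $M=W_{3,3}$ (so $\KM=M$ since $M$ has the equal images property) and $i=1$; then $J^2\KM$ is one-dimensional and $\mathcal{F}_1(J^2\KM)\cong\mathcal{O}_{\mathbb{P}^1}\neq 0$. More fundamentally, even if this vanishing did hold, it would not yield $\mathcal{F}_i(M)\cong\mathcal{F}_i(M/J^{i+1}\KM)$, because $\mathcal{F}_i$ is not an exact functor and there is no long exact sequence attached to a short exact sequence of $kE$-modules. What you actually need is that the projection induces, fibrewise, an isomorphism between the two subquotients $(\Ker X_\alpha\cap\Img X_\alpha^{i-1})/(\Ker X_\alpha\cap\Img X_\alpha^i)$ computed in $M$ and in $M/J^{i+1}\KM$. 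Your observation that $J^{i+1}\KM\subseteq\Img(X_\alpha^{i+1},M)$ is indeed the key ingredient for that, but to make the fibrewise check legitimate you also need $M/J^{i+1}\KM$ to have constant $(i{-}1)$-, $i$- and $(i{+}1)$-rank so that Lemmas~\ref{firstlemma} and \ref{kernelcapimage} apply to it. The cleanest route to those constant ranks is duality again, since $(M/J^{i+1}\KM)^\#\cong J^{-i}\mathfrak{K}(M^\#)$ and Lemma~\ref{jranklemma} handles the latter --- at which point you have essentially reconstructed the paper's argument in reverse order.
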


Our main objective in reducing to the above subquotients when computing
$\mathcal{F}_i(M)$ was the following: Although the vector
bundles on $\mathbb{P}^1(k)$ may be described succinctly
(every bundle is a direct sum of twists of the structure sheaf), the
category of $kE$-modules having constant Jordan type is known to be
wild unless $p=2$. It would be satisfying to find a subcategory
of $\fgmod(kE)$ through which the functor $\mathcal{F}_i$ factors,
one which lends itself to some sort of structure theorem.
Unfortunately, even when computing $\mathcal{F}_1(M)$, the
above theorem deals with the subquotient $J^{-1}\KM/J^2\KM$. Although
such subquotients have Loewy length only three, the class of modules
of the form $J^{-1}\KM/J^2\KM$ (where $M$ has constant Jordan type)
remains wild. There must certainly be a better behaved structural invariant
of $M$ that determines what $\mathcal{F}_i(M)$ is. In other words,
the goal of our program is to determine exactly how much of the structure
of $M$ the functors $\mathcal{F}_i$ actually detect.

The search for such a structural invariant has led us to investigate
a different filtration, namely the $n$th power generic kernel filtration.
As with the generic kernel, the $n$th power generic kernel ($n\geq 1$) of a
$kE$-module $M$ was also introduced in
\cite{Carlson/Friedlander/Suslin:2011a}. We will show that the $n$th power
generic kernels, along with their duals, give rise to a filtration of $M$
that performs the same task with respect to computing $\mathcal{F}_i(M)$
as the regular generic kernel filtration does, but with two added benefits:
First, a suitable choice of definition allows us to generalise our results
to the case $r>2$, and second, the Loewy length three subquo-
tients in the
$n$th power generic kernel filtration on which $\mathcal{F}_1(M)$ is
computed appear to have more tractable structures. In particular, it
appears that $\mathcal{F}_1(M)$ is determined by the Loewy length two
summands of these subquotients. We make a precise conjecture about this
point in our final section.

The authors would like to thank Julia Pevtsova and Alexandru Chirvasitu
for numerous discussions that contributed to the development
of this work.

\section{Background on elementary abelian
\texorpdfstring{$p$}{p}-group representations}

Let $p$ be a prime number and $k$ an algebraically closed field of
characteristic $p$. Throughout this paper, $E\cong(\mathbb{Z}/p)^r$
will denote an elementary abelian $p$-group of rank $r$. In what follows,
we choose a collection of pairwise commuting generators
$g_1,\ldots,g_r$ of $E$.
Letting $X_i$ denote the element $g_i-1$ in the group algebra $kE$,
we have $X_i^p=(g_i-1)^p=0$ since $k$ has characteristic $p$. We may
therefore identify $kE$ with the truncated
polynomial ring $k[X_1,\ldots,X_r]/(X_1^p,\ldots,X_r^p)$. In particular,
$kE$ is a local ring, and the Jacobson radical of $kE$ is generated
by the elements $X_i$.

If $\alpha=(\lambda_1,\ldots,\lambda_r)$ is any point in the affine
space $\mathbb{A}^r(k)$, we define the element
\[
X_\alpha=\lambda_1X_1+\cdots+\lambda_rX_r\in kE.
\]
Observe that we have $X_\alpha^p=0$ for all $\alpha$.
The assignment $\alpha\mapsto X_\alpha$ allows one to identify
$\mathbb{A}^r(k)$ with $\Rad(kE)/\Rad^2(kE)$. Note that for each
non-zero $\alpha$, the element $1+X_\alpha\in kE$ has multiplicative
order $p$, hence it generates a subgroup of $kE^\times$ isomorphic to
$\mathbb{Z}/p$. The subalgebra $k\langle 1+X_\alpha\rangle\subseteq kE$
is called a \emph{cyclic shifted subgroup} of $kE$. The nomenclature
is designed to indicate that, although $k\langle 1+X_\alpha\rangle$
is a group algebra, $\langle 1+X_\alpha\rangle$ is not a subgroup of
$E$ in general.

Throughout this paper we will deal exclusively with finitely generated
(i.e., finite dimen-
sional) $kE$-modules. If $M$ is a $kE$-module
and $\alpha$ is a point in $\mathbb{A}^r(k)$, then because $X_\alpha^p=0$,
the Jordan canonical form of the matrix representing the action of
$X_\alpha$ on $M$ consists of Jordan blocks whose eigenvalues
are all zero and whose lengths are at most $p$. The \emph{Jordan type}
of $X_\alpha$ on $M$ is defined to be the partition
\[
\JType(X_\alpha,M)=[p]^{a_p}[p-1]^{a_{p-1}}\ldots[1]^{a_1}
\]
of $\dim_k(M)$, where $X_\alpha$ acts on $M$ via $a_j$ Jordan blocks
of length $j$. Recall that representations of $k(\mathbb{Z}/p)$ are
classified in terms of Jordan canonical forms. In that context,
if $\alpha\neq 0$, then the Jordan type of $X_\alpha$ on $M$ is
precisely the isomorphism type of $M\res{k\langle 1+X_\alpha\rangle}$
when viewed as a $k(\mathbb{Z}/p)$-module via the identification
$\langle 1+X_\alpha\rangle\cong\mathbb{Z}/p$.

At first, one might be tempted to try to classify the indecomposable
objects in $\fgmod(kE)$. Unfortunately, $kE$ has wild representation type
unless $r=1$ or $r=p=2$, which essentially makes that task impossible.
We therefore confine ourselves to identifying special subcategories of
$\fgmod(kE)$ that we hope to better understand in terms of certain
invariants. This paper is primarily concerned with the category of
modules of constant Jordan type, which were introduced by
Carlson, Friedlander and Pevtsova \cite{Carlson/Friedlander/Pevtsova:2008a}.

\begin{definition}
A $kE$-module $M$ has \emph{constant Jordan type} if the partition
$\JType(X_\alpha,M)$ is independent of the choice of non-zero
$\alpha\in\mathbb{A}^r(k)$. If $M$ has constant Jordan type and
$\JType(X_\alpha,M)=[p]^{a_p}\ldots[1]^{a_1}$ for all non-zero $\alpha$,
then we say that $M$ has \emph{constant Jordan type}
$[p]^{a_p}\ldots[1]^{a_1}$. We denote the full subcategory of modules
of constant Jordan type by $\cJt(kE)$.
\end{definition}

For our purposes, the main feature of modules of constant Jordan
type is that they give rise to vector bundles
(i.e., locally free coherent sheaves) on $\mathbb{P}^{r-1}(k)$
in a natural way. Let $V$ be the subspace of $kE$ spanned by
$X_1,\ldots,X_r$. For $1\leq i\leq r$, let $Y_i\in V^\#$
be the basis element dual to $X_i$. The $Y_i$ then act as homogeneous
coordinate functions on $V$, and we identify
$\mathbb{P}^{r-1}(k)$ with $\Proj k[Y_1,\ldots,Y_r]$.
For an arbitrary $kE$-module $M$, let $\widetilde{M}$ denote the coherent
sheaf $M\otimes_k\mathcal{O}_{\mathbb{P}^{r-1}}$. In
\cite{Friedlander/Pevtsova:constr}, Friedlader and Pevtsova introduced
the operators
\[
\xymatrix{
\theta_M\colon\widetilde{M}(n) \ar@{>}[r] & \widetilde{M}(n+1)
}
\]
defined locally as follows: Any section of $\widetilde{M}(n)$ is of the
from $m\otimes f$, where $m\in M$ and $f$ is a homogeneous rational
function of degree $n$ in $Y_1,\ldots,Y_r$. The map $\theta_M$ is defined
by mapping $m\otimes f$ to the section $\sum_i X_im\otimes Y_if$ of
$\widetilde{M}(n+1)$. The virtue of this setup is that if
$\overline{\alpha}\in\mathbb{P}^{r-1}(k)$ is a closed point and
$\alpha\in\mathbb{A}^r(k)$ is a point lying above $\overline{\alpha}$,
then the fibre of $\theta_M$ at $\overline{\alpha}$ recovers
(up to a scalar factor) the $k$-linear map $X_\alpha\colon M\rightarrow M$.

We now describe the functors of interest in this paper. For a
$kE$-module $M$ and $1\leq i\leq p$, Benson and Pevtosva
\cite{Benson/Pevtsova:vb} defined the coherent sheaves
\[
\mathcal{F}_i(M)=
\frac{
\Ker\theta_M\cap\Img\theta_M^{i-1}
}
{
\Ker\theta_M\cap\Img\theta_M^i
}.
\]
Here, $\Ker\theta_M$ denotes the kernel of the morphism
$\widetilde{M}\rightarrow\widetilde{M}(1)$, whereas, for $j=i-1$ and $i$,
$\Img\theta_M^j$ denotes the image of the morphism
$\widetilde{M}(-j)\rightarrow\widetilde{M}$. With these conventions,
$\mathcal{F}_i(M)$ is a subquotient of $\widetilde{M}$. The following,
which appeared in \cite{Benson/Pevtsova:vb}, is the main fact concerning
the functors $\mathcal{F}_i$.

\begin{proposition}
A $kE$-module $M$ has constant Jordan type $[p]^{a_p}\ldots[1]^{a_1}$
if and only if, for each $1\leq i\leq p$, the coherent sheaf
$\mathcal{F}_i(M)$ is a vector bundle of rank $a_i$ on
$\mathbb{P}^{r-1}(k)$.
\end{proposition}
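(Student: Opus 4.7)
The plan is to reduce the equivalence to a fibrewise dimension count, exploiting the defining property that the fibre of $\theta_M$ at a closed point $\overline{\alpha}\in\mathbb{P}^{r-1}(k)$ is (up to a non-zero scalar) the $k$-linear map $X_\alpha\colon M\to M$, for any $\alpha\in\mathbb{A}^r(k)\setminus\{0\}$ lying above $\overline{\alpha}$. Since a coherent sheaf on the reduced scheme $\mathbb{P}^{r-1}(k)$ is locally free of rank $a$ precisely when $\overline{\alpha}\mapsto\dim_{k(\overline{\alpha})}\mathcal{F}(\overline{\alpha})$ is the constant function $a$, it suffices to match the fibre dimensions of $\mathcal{F}_i(M)$ to the Jordan block multiplicities of the operators $X_\alpha$.

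First I would record the standard linear-algebraic fact that for any nilpotent operator $T$ on a finite-dimensional $k$-space with Jordan type $[p]^{a_p}\cdots[1]^{a_1}$,
\[
a_i=\dim_k\frac{\Ker T\cap\Img T^{i-1}}{\Ker T\cap\Img T^i}=\rank T^{i-1}-2\rank T^i+\rank T^{i+1},
\]
as one sees by inspection on a Jordan basis. Applied to $T=X_\alpha$, this reformulates constant Jordan type as the statement that every function $\alpha\mapsto\rank X_\alpha^j$ with $0\le j\le p$ is constant on $\mathbb{A}^r(k)\setminus\{0\}$.

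The principal step, and what I expect to be the main obstacle, is to compare the sheaf-theoretic formation of $\mathcal{F}_i(M)$ with this module-theoretic quotient after passing to a fibre. Since tensoring with a residue field is only right exact, the formation of $\Ker\theta_M$, the images $\Img\theta_M^j$, and their intersections need not commute with restriction to $\overline{\alpha}$. The key observation for sidestepping this is that $\theta_M^j\colon\widetilde{M}(-j)\to\widetilde{M}$ has fibrewise rank $\rank X_\alpha^j$ at $\overline{\alpha}$, so each $\theta_M^j$ has locally constant rank on $\mathbb{P}^{r-1}(k)$ if and only if $M$ has constant Jordan type; and whenever $\theta_M^j$ and $\theta_M^{j+1}$ both have locally constant rank, $\Img\theta_M^j$ is a subbundle of $\widetilde{M}$, the sheaf $\Ker\theta_M\cap\Img\theta_M^j$ is the kernel of the induced surjection of subbundles $\Img\theta_M^j\twoheadrightarrow\Img\theta_M^{j+1}$ (hence itself a subbundle), and its fibre at $\overline{\alpha}$ is canonically $\Ker X_\alpha\cap\Img X_\alpha^j$.

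Both implications then follow from this compatibility. For $(\Rightarrow)$, constant Jordan type globalises the above analysis to all of $\mathbb{P}^{r-1}(k)$, exhibiting $\mathcal{F}_i(M)$ as a quotient of a subbundle of $\widetilde{M}$ by a subbundle whose fibrewise dimension is the constant $a_i$, hence a vector bundle of rank $a_i$. For $(\Leftarrow)$, the ranks $a_i$ of the $\mathcal{F}_i(M)$ agree at a generic $\overline{\alpha}$ (where all $\theta_M^j$ simultaneously attain their maximal ranks and the comparison above applies) with the generic Jordan block counts $a_i(\alpha_{\mathrm{gen}})$; constancy of the fibre dimensions $\dim\mathcal{F}_i(M)(\overline{\alpha})=a_i$ in $\overline{\alpha}$, combined with the identity $\sum_i i\cdot a_i(\alpha)=\dim_k M$ and the lower semicontinuity of each $\rank X_\alpha^j$, then forces $a_i(\alpha)=a_i$ for every non-zero $\alpha$, so that $\JType(X_\alpha,M)=[p]^{a_p}\cdots[1]^{a_1}$ uniformly.
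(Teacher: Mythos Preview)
The paper does not prove this proposition itself; it is quoted from Benson--Pevtsova, and the paper only reproduces the lemmas underlying the forward direction (Lemmas~\ref{rankcokernellemma}--\ref{kernelcapimage}). Your argument for $(\Rightarrow)$ is correct and essentially identical to that route: constant rank of each $\theta_M^j$ makes $\Img\theta_M^j$ a subbundle of $\widetilde{M}$, the sheaves $\Ker\theta_M\cap\Img\theta_M^j$ become subbundles with the expected fibres, and the quotient $\mathcal{F}_i(M)$ is locally free of rank $a_i$.

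Your $(\Leftarrow)$ direction, however, has a genuine gap. You correctly observe that on the dense open set where every $\theta_M^j$ has maximal rank, the fibre of $\mathcal{F}_i(M)$ computes $a_i(\alpha)$, so $a_i=a_i(\alpha_{\mathrm{gen}})$. But the final sentence---that constancy of $\dim\mathcal{F}_i(M)(\overline\alpha)$, the identity $\sum_i i\,a_i(\alpha)=\dim_kM$, and lower semicontinuity of $\rank X_\alpha^j$ together force $a_i(\alpha)=a_i$ everywhere---does not follow. The identity $\sum_i i\,a_i(\alpha)=\dim_kM$ holds for \emph{every} $\alpha$ regardless of any hypothesis, and semicontinuity only gives $\rank X_\alpha^j\le R_j$; these two facts hold for arbitrary $M$ and cannot by themselves force constant Jordan type. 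The remaining input, constancy of $\dim\mathcal{F}_i(M)(\overline\alpha)$, is not linked to $a_i(\alpha)$ at special points, because you have not shown that the fibre of $\mathcal{F}_i(M)$ at such a point equals (or even bounds) the linear-algebraic quantity $\dim\bigl(\Ker X_\alpha\cap\Img X_\alpha^{i-1}\bigr)/\bigl(\Ker X_\alpha\cap\Img X_\alpha^{i}\bigr)$.

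What is missing is exactly the filtration input the paper records as Proposition~\ref{filtrationproposition} (from Benson--Pevtsova, Lemmas~2.2--2.3): $\widetilde{M}=\Ker\theta_M^p$ is filtered by the $\Ker\theta_M^j$, with successive quotients $\Ker\theta_M^{j}/\Ker\theta_M^{j-1}\cong(\Ker\theta_M\cap\Img\theta_M^{j-1})(j-1)$, each of which is in turn filtered by the $\mathcal{F}_i(M)$. If all $\mathcal{F}_i(M)$ are locally free, every step of this filtration is a subbundle of $\widetilde{M}$, so on fibres one gets an honest flag $K_1(\overline\alpha)\subseteq\cdots\subseteq K_p(\overline\alpha)=M$ with $K_{j-1}(\overline\alpha)=\Ker\bigl(X_\alpha^{j-1}\big|_{K_j(\overline\alpha)}\bigr)$. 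Downward induction starting from $K_p(\overline\alpha)=M$ then gives $K_j(\overline\alpha)=\Ker(X_\alpha^j,M)$ for every $j$ and every $\overline\alpha$, whence $\dim\Ker(X_\alpha^j,M)=\rank\Ker\theta_M^j$ is constant. This is the step your semicontinuity sketch is standing in for.
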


The main result of \cite{Benson/Pevtsova:vb} showed that
the functor $\mathcal{F}_1$ realises all vector bundles on
$\mathbb{P}^{r-1}(k)$ up to a Frobenius twist. We record it here
in order to motivate our overall interest in the functors
$\mathcal{F}_i$, and our particular interest in the bahaviour of the
functor $\mathcal{F}_1$, the latter being the subject of our examples.

\begin{theorem}
If $p=2$ and $\mathcal{F}$ is a vector bundle of rank $s$ on
$\mathbb{P}^{r-1}(k)$, then there exists a $kE$-module of constant Jordan
type of the form $[p]^n[1]^s$ (for some $n$) such that
$\mathcal{F}_1(M)\cong\mathcal{F}$.
If $p>2$ and $\mathcal{F}$ is a vector bundle on $\mathbb{P}^{r-1}(k)$,
then there exists a $kE$-module of the form $[p]^n[1]^s$ such that
$\mathcal{F}_1(M)\cong F^*\mathcal{F}$, where
$F\colon\mathbb{P}^{r-1}(k)\rightarrow\mathbb{P}^{r-1}(k)$ is the
Frobenius morphism.
\end{theorem}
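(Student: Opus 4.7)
The plan is to invert the functor $\mathcal{F}_1$ at the sheaf level via Koszul duality. When $p=2$, the group algebra $kE = k[X_1,\ldots,X_r]/(X_i^2)$ is the exterior algebra $\Lambda V$ on $V = \mathrm{span}(X_1,\ldots,X_r)$, so the classical BGG correspondence relates graded $kE$-modules to graded modules over $S = k[Y_1,\ldots,Y_r]$, with $\theta_M = \sum X_i \otimes Y_i$ playing the role of the Koszul differential. This makes $\mathcal{F}_1$ essentially the sheafification of a BGG functor, so the inverse construction should proceed by resolving $\mathcal{F}$ on the $S$-side and reading off the $kE$-module structure.

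For $p=2$, given $\mathcal{F}$ I would form the graded $S$-module $\Gamma_*(\mathcal{F}) = \bigoplus_n H^0(\mathcal{F}(n))$ (truncated to non-negative degrees) and build its Tate resolution over $\Lambda V$. Truncating at an appropriate place produces a map of graded free $kE$-modules
\[
\phi\colon kE(-1)^b \longrightarrow kE^a
\]
whose entries are linear in the $X_i$ and whose cokernel $N$ satisfies $\mathcal{F}_1(N) \cong \mathcal{F}$ after a grading shift. Since $\mathcal{F}_1(kE) = 0$, one may freely add free summands to $N$ as padding, which is what allows one to absorb any spurious intermediate-length blocks of $X_\alpha$ and so achieve the precise constant Jordan type $[2]^n[1]^s$.

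For $p > 2$, the exterior algebra $\Lambda V$ is a proper subalgebra of $kE$. I would first run the construction above over $\Lambda V$ to obtain a $\Lambda V$-module $N$ realizing $\mathcal{F}$, and then inflate $N$ to a $kE$-module $M$ by stretching each length-$2$ block of $X_\alpha$ into a length-$p$ block (for instance via induction along $\Lambda V \hookrightarrow kE$, or by forming an appropriate self-extension). In this inflated module, the operator $\sum X_i \otimes Y_i$ pairs the top and bottom of each length-$p$ block via $X_i^{p-1}$ rather than $X_i$; on the sheaf side this introduces a Frobenius twist, yielding $\mathcal{F}_1(M) \cong F^*\mathcal{F}$.

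The main obstacle is verifying that the Jordan type is constant and confined to the partitions $[p]$ and $[1]$ at \emph{every} $\alpha \neq 0$ rather than only at a generic point. The BGG formalism controls $\mathcal{F}_1(M)$ at the sheaf level, but fibrewise block structure requires a genericity argument together with careful monitoring of how the Tate resolution specialises at closed points; the padding step is where this control is purchased. The appearance of precisely one Frobenius twist in characteristic $p > 2$ similarly demands a careful comparison between the linear Koszul differential and the $p$-th power structure built into the shifted subgroups and the operator $\theta_M$.
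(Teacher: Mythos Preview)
The paper does not prove this theorem at all: it is quoted verbatim as ``the main result of \cite{Benson/Pevtsova:vb}'' and recorded purely for motivation. There is therefore no proof in the present paper to compare your proposal against.

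That said, a few remarks on your sketch. For $p=2$ your BGG/Tate-resolution approach is essentially the one Benson and Pevtsova use in the cited paper, and is sound. For $p>2$ your outline has a genuine gap. Induction along $\Lambda V \hookrightarrow kE$ does not ``stretch each length-$2$ block into a length-$p$ block'' in any controlled way: the induced module $kE \otimes_{\Lambda V} N$ is much larger and its Jordan type at a general $\alpha$ is not easily read off from that of $N$. More seriously, your explanation of the Frobenius twist is not right: $\theta_M$ is always the \emph{linear} operator $\sum X_i\otimes Y_i$, so it never ``pairs top and bottom via $X_i^{p-1}$''. In Benson--Pevtsova the Frobenius enters because the module they build is governed by the operator $\theta_M^{p-1}$ (equivalently, by $X_\alpha^{p-1}$), and the expansion of $(\lambda_1 X_1+\cdots+\lambda_r X_r)^{p-1}$ modulo the ideal $(X_1^p,\ldots,X_r^p)$ produces monomials in the $\lambda_i$ that, after the relevant identifications, amount to replacing each $Y_i$ by $Y_i^p$ on the sheaf side. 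Your proposal does not account for this mechanism, and without it you cannot explain why exactly one Frobenius twist appears rather than none or several.
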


\section{Pullbacks of bundles and homogeneously embedded subgroups}
\label{pullbacksection}

The natural generalisation of a cyclic shifted subgroup of $kE$ is
a \emph{rank $s$ shifted subgroup} of $kE$, where $s\leq r$ is a fixed
positive integer. Specifically, a rank $s$ shifted subgroup is a
subalgebra of $kE$ that is isomorphic to the group algebra $kE'$, where
$E'$ is an elementary abelian $p$-group of rank $s$. Any embedding
$\phi\colon kE'\hookrightarrow kE$ is obtained by mapping a choice of
generators $T_1,\ldots,T_s$ of $\Rad(kE')$ to elements
$\phi(T_1),\ldots,\phi(T_s)\in\Rad(kE)$ whose images in
$\Rad(kE)/\Rad^2(kE)$ are linearly independent. In the following,
we restrict attention to the embeddings $\phi$ for which the elements
$\phi(T_j)$ are linear combinations of the generators $X_1,\ldots,X_r$ of
$\Rad(kE)$. We call such embeddings
\emph{homogeneously embedded $s$-shifted subgroups}. As we shall see,
homogeneously embedded $s$-shifted subgroups give rise to closed
immersions $\mathbb{P}^{s-1}\hookrightarrow\mathbb{P}^{r-1}$ that are of
interest in the study of vector bundles on projective space, e.g.,
embeddings of lines into $\mathbb{P}^{r-1}$.

Following the notation of the previous section, we continue to let $V$
denote the subspace of $kE$ spanned by $X_1,\ldots,X_r$, and we let
$U$ be the subspace of $kE'$ spanned by $T_1,\ldots,T_s$.
By definition, the homogeneous embedding $\phi\colon kE'\hookrightarrow kE$
is given by a linear embedding $U\hookrightarrow V$, which is represented
by an $r\times s$ matrix $\mathbf{A}=(a_{ij})$. Specifically, we have
$\phi(T_j)=\sum_{i=1}^r a_{ij}X_i$. Taking $k$-linear duals, the matrix
$\mathbf{A}^t$ induces a surjective linear map $V^\#\rightarrow U^\#$.
Letting $Z_1,\ldots,Z_s$ denote the dual elements in $U^\#$ that correspond
to $T_1,\ldots,T_s$, respectively, $\mathbf{A}^t$ then gives rise to a
surjective graded homomorphism of $k$-algebras
\[
\xymatrix{
\phi^\#\colon k[Y_1,\ldots,Y_r] \ar@{>}[r] & k[Z_1,\ldots,Z_s].
}
\]
Specifically, we have $\phi^\#(Y_i)=\sum_{j=1}^s a_{ij}Z_j$.
Finally, applying the functor $\Proj$, the graded homomorphism $\phi^\#$
induces the desired closed immersion
$f\colon\mathbb{P}^{s-1}\hookrightarrow\mathbb{P}^{r-1}$.

For a finite dimensional $kE$-module $M$, we now wish to compare the
coherent sheaves $f^*(\mathcal{F}_i(M))$ and $\mathcal{F}_i(M\res{kE'})$
on $\mathbb{P}^{s-1}$, where by abuse of notation, $\mathcal{F}_i$ denotes
both functors
$\fgmod(kE)\rightarrow\coh(\mathbb{P}^{r-1})$ and
$\fgmod(kE')\rightarrow\coh(\mathbb{P}^{s-1})$, respectively.

\begin{proposition}
Let $M$ be any finite dimensional $kE$-module. Let $\mathcal{E}^{\bullet}$
denote the sequence of coherent sheaves
\[
\xymatrix @C=0.5in{
\cdots \ar@{>}[r] & \widetilde{M}(n-1) \ar@{>}[r]^{\ \ \theta_M} &
\widetilde{M}(n) \ar@{>}[r]^{\theta_M\ \ \ } & \widetilde{M}(n+1)
\ar@{>}[r] & \cdots
}
\]
on $\mathbb{P}^{r-1}$ and let $\mathcal{E}\res{kE'}^\bullet$ denote
the sequence of coherent sheaves
\[
\xymatrix @C=0.5in{
\cdots \ar@{>}[r] & \widetilde{M\res{kE'}}(n-1)
\ar@{>}[r]^{\ \ \theta_{M\res{kE'}}} & \widetilde{M\res{kE'}}(n)
\ar@{>}[r]^{\theta_{M\res{kE'}}\ \ \ } &
\widetilde{M\res{kE'}}(n+1) \ar@{>}[r] & \cdots
}
\]
on $\mathbb{P}^{s-1}$. (Observe that these are not chain complexes
unless $p=2$.) Then $f^*\mathcal{E}^{\bullet}$ is naturally isomorphic
to $\mathcal{E}\res{kE'}^\bullet$ in the functor category
$\Fun(\mathbb{Z},\coh(\mathbb{P}^{s-1}))$.
\end{proposition}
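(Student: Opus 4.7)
The plan is to construct, for each $n \in \mathbb{Z}$, a canonical isomorphism $\iota_n\colon f^*\widetilde{M}(n) \to \widetilde{M\res{kE'}}(n)$ of coherent sheaves on $\mathbb{P}^{s-1}$, and then to verify that these isomorphisms intertwine $f^*\theta_M$ with $\theta_{M\res{kE'}}$. Since the only nontrivial arrows in the indexing category $\mathbb{Z}$ are the successor maps $n\to n+1$, along which the functors $\mathcal{E}^\bullet$ and $\mathcal{E}\res{kE'}^\bullet$ send their arrows to $\theta_M$ and $\theta_{M\res{kE'}}$ respectively, naturality in $\Fun(\mathbb{Z},\coh(\mathbb{P}^{s-1}))$ reduces precisely to the commutativity of these squares.

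For the construction of $\iota_n$, recall that $\widetilde{M}$ is the free $\mathcal{O}_{\mathbb{P}^{r-1}}$-module $M\otimes_k \mathcal{O}_{\mathbb{P}^{r-1}}$. Since $f^*$ commutes with tensor products of $\mathcal{O}$-modules, and since the graded ring map $\phi^\#\colon k[Y_1,\ldots,Y_r]\to k[Z_1,\ldots,Z_s]$ gives the standard identification $f^*\mathcal{O}_{\mathbb{P}^{r-1}}(n)\cong \mathcal{O}_{\mathbb{P}^{s-1}}(n)$, one obtains
$$f^*\widetilde{M}(n) \cong M\otimes_k \mathcal{O}_{\mathbb{P}^{s-1}}(n).$$
As the underlying $k$-vector spaces of $M$ and $M\res{kE'}$ coincide, the right-hand side is tautologically $\widetilde{M\res{kE'}}(n)$, and this defines $\iota_n$.

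The heart of the proof is then the verification that $\iota_{n+1}\circ f^*\theta_M = \theta_{M\res{kE'}}\circ \iota_n$ on local sections. Writing such a section as $m\otimes g'$, where $g'$ is a homogeneous degree-$n$ rational function in $Z_1,\ldots,Z_s$, the left-hand side produces $\sum_{i=1}^r X_im\otimes \phi^\#(Y_i)g' = \sum_{i,j} a_{ij}\,X_im \otimes Z_jg'$, while the right-hand side produces $\sum_{j=1}^s \phi(T_j)m\otimes Z_jg' = \sum_{i,j} a_{ij}\,X_im \otimes Z_jg'$, invoking the defining relations $\phi(T_j)=\sum_i a_{ij}X_i$ and $\phi^\#(Y_i)=\sum_j a_{ij}Z_j$. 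The two expressions coincide term-by-term, and this match is nothing other than the transpose duality between the matrices $\mathbf{A}$ and $\mathbf{A}^t$ that governs the entire setup.

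The main obstacle is not conceptual but organisational: one must execute the above local calculation on the standard affine cover of $\mathbb{P}^{s-1}$, check that the local descriptions glue to the global sheaf maps $f^*\theta_M$ and $\theta_{M\res{kE'}}$, and confirm that the identification $f^*\mathcal{O}_{\mathbb{P}^{r-1}}(n)\cong \mathcal{O}_{\mathbb{P}^{s-1}}(n)$ is natural in $n$. These points are routine for the closed immersion arising from a surjective graded ring map, but they do require a consistent choice of local trivialisations; this is the only delicate step in what is otherwise a formal argument.
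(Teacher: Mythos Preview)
Your proposal is correct and follows essentially the same route as the paper's proof: both construct the canonical identification $f^*\widetilde{M}(n)\cong\widetilde{M\res{kE'}}(n)$ via $f^*\mathcal{O}_{\mathbb{P}^{r-1}}(n)\cong\mathcal{O}_{\mathbb{P}^{s-1}}(n)$ and then verify that $f^*\theta_M$ and $\theta_{M\res{kE'}}$ coincide through the computation $\sum_j Z_j\,\phi(T_j)=\sum_{i,j}a_{ij}Z_jX_i=\sum_i\phi^\#(Y_i)X_i$. The only cosmetic difference is that the paper fixes a basis of $M$ to phrase everything as $d\times d$ matrices over the homogeneous coordinate rings (so the vertical isomorphisms are literally identity matrices), whereas you work more intrinsically with tensor products and local sections; the underlying argument is identical.
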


\begin{proof}
For $n\in\mathbb{Z}$, note that
$\widetilde{M}(n)=M\otimes_k\mathcal{O}_{\mathbb{P}^{r-1}}(n)$ is
isomorphic to $\mathcal{O}_{\mathbb{P}^{r-1}}(n)^{\oplus d}$, where
$d=\dim_k M$. In this way, $\theta_M=\sum_i X_i\otimes Y_i$ can be viewed
as the $d\times d$ matrix $\sum_i Y_iX_i$ with entries in
$k[Y_1,\ldots,Y_r]$. It follows that $f^*\theta_M$ is the matrix map
\[
\xymatrix @C=0.9in{
\mathcal{O}_{\mathbb{P}^{s-1}}(n)^{\oplus d}
=f^*(\mathcal{O}_{\mathbb{P}^{r-1}}(n)^{\oplus d})
\ar@{>}[r]^{\sum_i \phi^\#(Y_i)X_i\ \ \ \ \ \ } &
f^*(\mathcal{O}_{\mathbb{P}^{r-1}}(n+1)^{\oplus d})
=\mathcal{O}_{\mathbb{P}^{s-1}}(n+1)^{\oplus d}.
}
\]
On the other hand,
$\widetilde{M\res{kE'}}(n)=
M\res{kE'}\otimes_k\:\mathcal{O}_{\mathbb{P}^{s-1}}(n)$ can be
identified with $\mathcal{O}_{\mathbb{P}^{s-1}}(n)^{\oplus d}$,
hence we may view $\theta_{M\res{kE'}}=\sum_j T_j\otimes Z_j$
as a $d\times d$ matrix with entries in $k[Z_1,\ldots,Z_s]$.
But as a $k$-linear endomorphism of $M$, $T_j$ acts via the embedding
$\phi\colon kE'\rightarrow kE$. In other words,
$\theta_{M\res{kE'}}$ acts via the matrix
\[
\sum_j Z_j\phi(T_j)=\sum_{i,j} Z_j(a_{ij}X_i)
=\sum_{i,j} (a_{ij}Z_j)X_i=\sum_i \phi^\#(Y_i)X_i.
\]
Taking the vertical arrows in
\[
\xymatrix @C=0.8in{
\mathcal{O}_{\mathbb{P}^{s-1}}(n)^{\oplus d}
\ar@{>}[r]^{\hspace{-0.15in}\sum_i \phi^\#(Y_i)X_i} \ar@{>}[d] &
\mathcal{O}_{\mathbb{P}^{s-1}}(n+1)^{\oplus d} \ar@{>}[d] \\
\mathcal{O}_{\mathbb{P}^{s-1}}(n)^{\oplus d}
\ar@{>}[r]^{\hspace{-0.15in}\sum_j Z_j\phi(T_j)} &
\mathcal{O}_{\mathbb{P}^{s-1}}(n+1)^{\oplus d}
}
\]
to be the $d\times d$ identity matrix induces the required isomorphism
$f^*\mathcal{E}^\bullet\xrightarrow{\sim}\mathcal{E}\res{kE'}^\bullet$.
\end{proof}

\begin{corollary}\label{restrictionpullback}
If $M$ is any finite dimensional $kE$-module, then
\[
\mathcal{F}_i(M\res{kE'})\cong
\frac{\Ker(f^*\theta_M)\cap\Img(f^*\theta_M^{i-1})}
{\Ker(f^*\theta_M)\cap\Img(f^*\theta_M^i)}.
\]
\end{corollary}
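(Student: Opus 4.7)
The plan is to read this corollary off directly from the preceding proposition by unwinding the definition of $\mathcal{F}_i$. By definition,
\[
\mathcal{F}_i(M\res{kE'}) = \frac{\Ker\theta_{M\res{kE'}}\cap\Img\theta_{M\res{kE'}}^{i-1}}{\Ker\theta_{M\res{kE'}}\cap\Img\theta_{M\res{kE'}}^{i}},
\]
where kernels and images are formed inside the sequence $\mathcal{E}\res{kE'}^\bullet$. The task therefore reduces to transporting this expression across the natural isomorphism $f^*\mathcal{E}^\bullet \xrightarrow{\sim} \mathcal{E}\res{kE'}^\bullet$ produced by the proposition.

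Concretely, at each degree $n$ the proposition furnishes an isomorphism of coherent sheaves $f^*\widetilde{M}(n)\xrightarrow{\sim}\widetilde{M\res{kE'}}(n)$, namely the identity on $\mathcal{O}_{\mathbb{P}^{s-1}}(n)^{\oplus d}$ under the identifications appearing in the proof of the proposition. These degreewise isomorphisms intertwine $f^*\theta_M$ with $\theta_{M\res{kE'}}$ at every step. Since kernels, images of iterated maps, intersections of subsheaves and quotients of coherent sheaves depend only on the diagram up to isomorphism, the isomorphism of sequences identifies $\Ker\theta_{M\res{kE'}}$ with $\Ker(f^*\theta_M)$ and $\Img\theta_{M\res{kE'}}^{\,j}$ with $\Img(f^*\theta_M^{\,j})$ for $j=i-1,i$, as subsheaves of $\widetilde{M\res{kE'}}\cong f^*\widetilde{M}$. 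Substituting these into the displayed formula yields the claimed description of $\mathcal{F}_i(M\res{kE'})$.

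No real obstacle is anticipated here: the proposition has already carried out the substantive verification (matching the matrix $\sum_i\phi^\#(Y_i)X_i$ with $\sum_j Z_j\phi(T_j)$). The only bookkeeping point is that iterating $f^*\theta_M$ step by step in $\Fun(\mathbb{Z},\coh(\mathbb{P}^{s-1}))$ matches iterating $\theta_{M\res{kE'}}$ step by step under the natural isomorphism — a formal consequence of naturality. Consequently the corollary is essentially a restatement of the proposition tailored to the definition of $\mathcal{F}_i$.
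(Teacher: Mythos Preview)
Your proposal is correct and matches the paper's treatment: the corollary is stated without proof, as it follows immediately from the proposition by transporting the definition of $\mathcal{F}_i$ across the natural isomorphism $f^*\mathcal{E}^\bullet\xrightarrow{\sim}\mathcal{E}\res{kE'}^\bullet$. Your explicit unwinding of this transport is a faithful elaboration of what the paper leaves implicit.
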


Before giving our next result, we first recall an apparently standard
lemma whose proof we provide for completeness.

\begin{lemma}\label{leftexact}
Let $f\colon X\rightarrow Y$ be any morphism of locally ringed spaces. If
\[
\xymatrix{
0 \ar@{>}[r] & \mathcal{G}' \ar@{>}[r] & \mathcal{G} \ar@{>}[r] &
\mathcal{G}'' \ar@{>}[r] & 0
}
\]
is a short exact sequence of $\mathcal{O}_Y$-modules with $\mathcal{G}''$
locally free, then
\[
\xymatrix{
0 \ar@{>}[r] & f^*\mathcal{G}' \ar@{>}[r] & f^*\mathcal{G} \ar@{>}[r] &
f^*\mathcal{G}'' \ar@{>}[r] & 0
}
\]
is a short exact sequence of $\mathcal{O}_X$-modules.
\end{lemma}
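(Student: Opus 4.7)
The plan is to reduce the question to a purely local one on $X$ and to exploit the fact that, over open sets where $\mathcal{G}''$ trivialises, the original short exact sequence splits. Since exactness of a sequence of $\mathcal{O}_X$-modules may be checked on any open cover of $X$, it suffices to produce a cover on which the pulled-back sequence is visibly exact.

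First I would choose an open cover $\{U_\alpha\}$ of $Y$ on which $\mathcal{G}''|_{U_\alpha}$ is a free $\mathcal{O}_{U_\alpha}$-module. Over each $U_\alpha$, the restricted sequence $0 \to \mathcal{G}'|_{U_\alpha} \to \mathcal{G}|_{U_\alpha} \to \mathcal{G}''|_{U_\alpha} \to 0$ admits a splitting, because a free $\mathcal{O}_{U_\alpha}$-module is projective in the category of $\mathcal{O}_{U_\alpha}$-modules. Consequently, locally on $Y$ one obtains an isomorphism $\mathcal{G}|_{U_\alpha} \cong \mathcal{G}'|_{U_\alpha} \oplus \mathcal{G}''|_{U_\alpha}$ under which the given maps are identified with the canonical inclusion and projection.

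Next I would apply $f^*$. Since $f^*$ is additive and commutes with restriction to open subsets (the canonical identification $f^*(\mathcal{H})|_{f^{-1}(U_\alpha)}$ with the pullback of $\mathcal{H}|_{U_\alpha}$ along the restricted morphism $f^{-1}(U_\alpha) \to U_\alpha$), the local splitting above transports to a splitting of the pulled-back sequence on $f^{-1}(U_\alpha)$. The opens $f^{-1}(U_\alpha)$ cover $X$, and on each of them the pulled-back sequence is split exact; hence it is exact on all of $X$.

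The main obstacle, such as it is, is bookkeeping: one must verify the compatibility of $f^*$ with restriction to opens, which follows formally from the description of $f^*$ as the exact inverse image $f^{-1}$ followed by extension of scalars along $f^{-1}\mathcal{O}_Y \to \mathcal{O}_X$. An alternative route bypasses splittings entirely by noting that since $f^{-1}$ is exact, only the right exact functor $-\otimes_{f^{-1}\mathcal{O}_Y}\mathcal{O}_X$ can introduce a kernel, and then observing that $f^{-1}\mathcal{G}''$ is locally free (hence flat) over $f^{-1}\mathcal{O}_Y$, so the relevant $\Tor_1$ sheaf vanishes.
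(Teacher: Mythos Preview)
Your primary argument contains a subtle error: a free $\mathcal{O}_{U_\alpha}$-module of finite rank is \emph{not} in general projective in the category of all $\mathcal{O}_{U_\alpha}$-modules. Surjectivity of a sheaf map is a stalkwise condition, so a surjection $\mathcal{G}|_{U_\alpha}\to\mathcal{O}_{U_\alpha}^{\oplus n}$ need not be surjective on sections over $U_\alpha$, and hence need not split there. (For instance, $\mathcal{O}_{\mathbb{P}^1}$ is not projective, since $H^1(\mathbb{P}^1,\mathcal{O}_{\mathbb{P}^1}(-2))\neq 0$ exhibits a non-split extension of $\mathcal{O}_{\mathbb{P}^1}$.) The fix is easy: refine the cover. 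Given a basis $e_1,\ldots,e_n$ of $\mathcal{G}''(U_\alpha)$, each point of $U_\alpha$ has a neighbourhood over which all the $e_j$ lift to sections of $\mathcal{G}$, and on such a neighbourhood the sequence does split. With this refinement your argument goes through and yields a more elementary proof than the paper's.

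Your closing alternative---observing that $f^{-1}$ is exact and that $f^{-1}\mathcal{G}''$ is locally free over $f^{-1}\mathcal{O}_Y$, whence the relevant $\Tor_1$ vanishes---is precisely the paper's argument, which carries out this $\Tor$ computation on stalks.
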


\begin{proof}
The functor $f^*$ is right exact, so we have an exact sequence of
$\mathcal{O}_X$-modules
\[
\xymatrix{
L_1f^*\mathcal{G}'' \ar@{>}[r] & f^*\mathcal{G}' \ar@{>}[r] &
f^*\mathcal{G} \ar@{>}[r] & f^*\mathcal{G}'' \ar@{>}[r] & 0.
}
\]
Note that the leftmost term is
\[
L_1((-\otimes_{f^{-1}\mathcal{O}_Y}\mathcal{O}_X)\circ f^{-1})
(\mathcal{G}'')=
L_1(-\otimes_{f^{-1}\mathcal{O}_Y}\mathcal{O}_X)(f^{-1}\mathcal{G}'')
=\Tor^{f^{-1}\mathcal{O}_Y}_1(f^{-1}\mathcal{G}'',\mathcal{O}_X),
\]
where the first equality holds since $f^{-1}$ is exact. Now let
$x\in X$ and consider the localisa-
tion $(L_1f^*\mathcal{G}'')_x$.
Because $\Tor$ commutes with localisation, this is equal to
\[
\Tor^{f^{-1}\mathcal{O}_Y}_1(f^{-1}\mathcal{G}'',\mathcal{O}_X)_x=
\Tor^{\mathcal{O}_{Y,f(x)}}_1(\mathcal{G}''_{f(x)},\mathcal{O}_{X,x}).
\]
Since $\mathcal{G}''$ is locally free, the right hand term is zero, hence
$(L_1f^*\mathcal{G}'')_x=0$. This being true for all $x\in X$, it follows
that $L_1f^*\mathcal{G}''=0$.
\end{proof}

Returning to the case where
$f\colon\mathbb{P}^{s-1}\rightarrow\mathbb{P}^{r-1}$ is the closed
immersion corresponding to the embedding $\phi\colon kE'\rightarrow kE$,
we are now in a position to prove the following.

\begin{theorem}\label{pullback}
If $M$ is a $kE$-module of constant Jordan type, then
$f^*\mathcal{F}_i(M)\cong\mathcal{F}_i(M\res{kE'})$ for all
$1\leq i\leq p$.
\end{theorem}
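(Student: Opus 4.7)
The overall plan is to show that $f^*$ commutes with each operation used to build $\mathcal{F}_i(M)$ from $\theta_M$ (kernel, image, intersection, quotient), and then to appeal to Corollary \ref{restrictionpullback}. The constant Jordan type hypothesis is the crucial ingredient: it guarantees that the relevant intermediate sheaves are locally free, so Lemma \ref{leftexact} applies at every step.

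First, Corollary \ref{restrictionpullback} realises $\mathcal{F}_i(M\res{kE'})$ as the quotient
\[
\frac{\Ker(f^*\theta_M)\cap\Img(f^*\theta_M^{i-1})}{\Ker(f^*\theta_M)\cap\Img(f^*\theta_M^i)},
\]
so the theorem reduces to commuting $f^*$ past the kernels, images, and intersections in the construction of $\mathcal{F}_i(M)$. As a sanity check, $M\res{kE'}$ itself has constant Jordan type: since $\phi$ is a linear embedding of $U$ into $V$, any non-zero $\alpha\in\mathbb{A}^s(k)$ maps to a non-zero $\phi(\alpha)\in\mathbb{A}^r(k)$, and $\JType(T_\alpha,M\res{kE'})=\JType(X_{\phi(\alpha)},M)$ is independent of $\alpha$.

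The constant Jordan type of $M$ is equivalent to each morphism $\theta_M^j\colon\widetilde{M}(-j)\to\widetilde{M}$ having locally constant rank, and in turn to $\Img\theta_M^j$ being a subbundle of $\widetilde{M}$ with locally free quotient. Applying Lemma \ref{leftexact} to the short exact sequence
\[
0\to\Img\theta_M^j\to\widetilde{M}\to\widetilde{M}/\Img\theta_M^j\to 0
\]
then yields $f^*\Img\theta_M^j\cong\Img(f^*\theta_M^j)$ as subsheaves of $f^*\widetilde{M}$. Restricting $\theta_M$ to $\Img\theta_M^j$ produces the second short exact sequence
\[
0\to\Ker\theta_M\cap\Img\theta_M^j\to\Img\theta_M^j\xrightarrow{\theta_M}\Img\theta_M^{j+1}\to 0,
\]
whose quotient is again locally free; combining Lemma \ref{leftexact} with the previous identification gives $f^*(\Ker\theta_M\cap\Img\theta_M^j)\cong\Ker(f^*\theta_M)\cap\Img(f^*\theta_M^j)$. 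Finally, the defining sequence
\[
0\to\Ker\theta_M\cap\Img\theta_M^i\to\Ker\theta_M\cap\Img\theta_M^{i-1}\to\mathcal{F}_i(M)\to 0
\]
has locally free quotient by the Benson--Pevtsova proposition, and one last application of Lemma \ref{leftexact} delivers
\[
f^*\mathcal{F}_i(M)\cong\frac{f^*(\Ker\theta_M\cap\Img\theta_M^{i-1})}{f^*(\Ker\theta_M\cap\Img\theta_M^i)}\cong\mathcal{F}_i(M\res{kE'}).
\]

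The main obstacle is really a bookkeeping one: verifying that each sheaf playing the role of the third term in an application of Lemma \ref{leftexact} is genuinely locally free. All of these local freeness claims reduce to the equivalence between the constant Jordan type of $M$ and the constancy of the rank of $\theta_M^j$ at every closed point, which follows from the fact that the fibre of $\theta_M^j$ at $\overline{\alpha}$ is (up to a scalar) the map $X_\alpha^j$ on $M$. Once that characterisation is in hand, the theorem is a short cascade of applications of Lemma \ref{leftexact}.
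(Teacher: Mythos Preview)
Your proposal is correct and follows essentially the same approach as the paper: a cascade of applications of Lemma~\ref{leftexact} to the three short exact sequences (image inside $\widetilde{M}$, then $\Ker\cap\Img$ as the kernel of $\theta_M$ restricted to an image, then the defining sequence of $\mathcal{F}_i(M)$), each time using constant Jordan type to guarantee the quotient is locally free. The only cosmetic slip is that the middle sequence should end in $\Img\theta_M^{j+1}(1)$ rather than $\Img\theta_M^{j+1}$, which does not affect the argument.
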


\begin{proof}
We first claim that we can identify $f^*(\Img\theta_M^i)$ and
$\Img(f^*\theta_M^i)$ as subobjects of $f^*\widetilde{M}$ for all
$1\leq i\leq p$.  Since $M$ has constant Jordan type, the morphism
$\theta^{i}_{M}\colon\widetilde{M}(-i)\rightarrow\widetilde{M}$
has locally free cokernel.  By Lemma \ref{leftexact}, applying
$f^*$ gives an exact sequence 
%
%
\[
\xymatrix{
0 \ar@{>}[r] & f^*(\Img\theta_M^i) \ar@{>}[r] & f^*\widetilde{M}
\ar@{>}[r] & f^*(\Coker\theta_M^i) \ar@{>}[r] & 0
}
\]
By the universal property of cokernels, there is a unique isomorphism $u$
making the following diagram commute.
\[
\xymatrix @C=0.5in{
0 \ar@{>}[r] &
f^*(\Img\theta_M^i) \ar@{>}[r]^-{f^{\ast}\theta^i_M} &
f^*\widetilde{M} \ar@{>}[r]^-{q'} \ar@{>}[rd]_{q} &
f^*(\Coker\theta_M^i)  \ar@{>}[r] & 0 \\
& & & \Coker(f^*\theta_M^i) \ar@{>}[u]_u &
}
\]
Since $u$ is an isomorphism, we have $\Ker q' = \Ker q$, hence 
$f^*(\Img\theta_M^i)=\Img(f^*\theta_M^i)$ as subobjects of
$f^*\widetilde{M}$. Now consider the short exact sequence
\[
\xymatrix{
0 \ar@{>}[r] & \Ker\theta_M\cap\Img\theta_M^{i-1} \ar@{>}[r] &
\Img\theta_M^{i-1} \ar@{>}[r]^{\theta_M\ } & \Img\theta_M^i(1) \ar@{>}[r]
& 0
}
\]
of coherent sheaves on $\mathbb{P}^{r-1}$. Since $M$ has constant
Jordan type, $\Img\theta_M^i$ is locally free, so a similar argument
shows that
$f^*(\Ker\theta_M\cap\Img\theta_M^{i-1})=
\Ker(f^*\theta_M)\cap\Img(f^*\theta_M^{i-1})$ 
as subobjects of $f^*\widetilde{M}$ for all $i$.

Finally, consider the short exact sequence
\[
\xymatrix{
0 \ar@{>}[r] & \Ker\theta_M\cap\Img\theta_M^i \ar@{>}[r] &
\Ker\theta_M\cap\Img\theta_M^{i-1} \ar@{>}[r] & \mathcal{F}_i(M)
\ar@{>}[r] & 0
}
\]
defining $\mathcal{F}_i(M)$. Because $M$ has constant Jordan type,
Proposition 2.1 of \cite{Benson/Pevtsova:vb} tells us that
$\mathcal{F}_i(M)$ is locally free. Another application
of Lemma \ref{leftexact} then reveals that
\[
\xymatrix{
0 \ar@{>}[r] & f^*(\Ker\theta_M\cap\Img\theta_M^i) \ar@{>}[r] &
f^*(\Ker\theta_M\cap\Img\theta_M^{i-1}) \ar@{>}[r] & f^*\mathcal{F}_i(M)
\ar@{>}[r] & 0
}
\]
is exact. In light of Corollary \ref{restrictionpullback}, the
diagram of short exact sequences
\[
\xymatrix{
0 \ar@{>}[r] & f^*(\Ker\theta_M\cap\Img\theta_M^i) \ar@{>}[r]
\ar@{=}[d] & f^*(\Ker\theta_M\cap\Img\theta_M^{i-1}) \ar@{>}[r]
\ar@{=}[d] & f^*\mathcal{F}_i(M) \ar@{>}[r] & 0 \\
0 \ar@{>}[r] & \Ker(f^*\theta_M)\cap\Img(f^*\theta_M^i) \ar@{>}[r] &
\Ker(f^*\theta_M)\cap\Img(f^*\theta_M^{i-1}) \ar@{>}[r] &
\mathcal{F}_i(M\res{kE'}) \ar@{>}[r] & 0
}
\]
immediately implies that
$f^*\mathcal{F}_i(M)\cong\mathcal{F}_i(M\res{kE'})$.
\end{proof}

\begin{remark}
A similar statement to Theorem \ref{pullback} cannot hold for the
pushforward along $f$. In particular, $f_*\mathcal{F}_i(M\res{kE'})$ is
never isomorphic to $\mathcal{F}_i(M)$ unless the latter sheaf is zero or
$s=r$. This is because the pushforward of a sheaf along a closed
immersion that is not surjective is never globally supported, hence
cannot be locally free of non-zero rank.
\end{remark}

\section{More on the operator
\texorpdfstring{$\theta_M$}{thetaM} and vector bundles}

This section consists of extensions of the
preliminary results in \cite{Benson/Pevtsova:vb}. We give an explicit exposition of these key
ideas, as they will be used extensively throughout the sequel.
We first recall what is perhaps the most important fact about
locally free sheaves on projective space. It is traditionally
referenced as Exercise II.5.8 of \cite{Hartshorne:1977a}.

\begin{lemma}\label{rankcokernellemma}
Let $X$ be a reduced connected noetherian scheme and
$f\colon\mathcal{E}\rightarrow\mathcal{E}'$ a morphism of locally
free sheaves on $X$. Then the dimension of the fibre
\[
\xymatrix @C=0.3in{
f\otimes k(x)\colon\mathcal{E}\otimes_{\mathcal{O}_X} k(x) \ar@{>}[r] &
\mathcal{E}'\otimes_{\mathcal{O}_X} k(x)
}
\]
is independent of $x\in X$ if and only if $\,\Coker f$ is locally free.

If these conditions hold, then the coherent sheaf $\,\Img f$ is also
locally free.
\end{lemma}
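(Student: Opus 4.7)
The plan is to reduce the statement to the standard fact that, on a reduced noetherian scheme, a coherent sheaf is locally free if and only if its fibre dimension is locally constant. The argument splits into three short steps: a fibre-dimension identity obtained from right exactness of $-\otimes_{\mathcal{O}_X}k(x)$; an application of the fibre criterion to $\Coker f$; and a splitting argument for the short exact sequence $0\to\Img f\to\mathcal{E}'\to\Coker f\to 0$.

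For the first step, since $X$ is connected and $\mathcal{E}'$ is locally free, its rank is a constant $r'$. Applying $-\otimes_{\mathcal{O}_X}k(x)$ to the right exact sequence $\mathcal{E}\xrightarrow{f}\mathcal{E}'\to\Coker f\to 0$ gives
\[
\dim_{k(x)}\Img(f\otimes k(x))+\dim_{k(x)}((\Coker f)\otimes k(x))=r'
\]
for every $x\in X$. Hence the fibre dimension of $f$ is independent of $x$ if and only if that of $\Coker f$ is. For the second step, I would invoke the standard criterion: on a reduced noetherian scheme, a coherent sheaf $\mathcal{G}$ is locally free if and only if the function $x\mapsto\dim_{k(x)}\mathcal{G}(x)$ is locally constant. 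The argument uses Nakayama's lemma to lift a $k(x)$-basis of the fibre to generators of the stalk, producing a surjection $\mathcal{O}_X^n\twoheadrightarrow\mathcal{G}$ on an affine neighbourhood of $x$; if the fibre dimension is locally constant, the kernel of this surjection has vanishing fibre at every point of the neighbourhood, so each local section vanishes at every point, and reducedness of $X$ forces the kernel to be zero. Applied to $\Coker f$, this yields the stated equivalence (using connectedness of $X$ to upgrade ``locally constant'' to ``constant'').

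For the final assertion, once $\Coker f$ is locally free, the short exact sequence $0\to\Img f\to\mathcal{E}'\to\Coker f\to 0$ splits on any affine open that simultaneously trivialises all three sheaves. Consequently $\Img f$ is, locally, a direct summand of the finite free $\mathcal{O}_X$-module $\mathcal{E}'$, and is therefore itself locally free of finite rank. The only delicate ingredient in the entire argument is the fibre criterion for local freeness, whose non-trivial direction genuinely requires $X$ to be reduced; apart from that standard input, the proof is purely formal.
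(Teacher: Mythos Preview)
Your argument is correct and matches the paper's approach: the paper simply cites Hartshorne, Exercise II.5.8, for the equivalence and then, for the final assertion, invokes exactly the short exact sequence $0\to\Img f\to\mathcal{E}'\to\Coker f\to 0$ with locally free right-hand term, just as you do. One cosmetic point: in your last step you need only that $\Coker f$ be free on the chosen affine open (its projectivity then gives the splitting); asking that all three sheaves trivialise there presupposes the conclusion about $\Img f$, though the fix is immediate.
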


\begin{proof}
The second statement follows from the short exact sequence
\[
\xymatrix{
0 \ar@{>}[r] & \Img f \ar@{>}[r] & \mathcal{E}' \ar@{>}[r] &
\Coker f \ar@{>}[r] & 0
}
\]
in which the map on the right is a surjection of locally free sheaves.
\end{proof}

\begin{notation}
Observe that if $M$ is a $kE$-module, then for each non-zero
$\alpha=(\lambda_1,\ldots,\lambda_r)$ in $\mathbb{A}^r(k)$, the
submodules $\Img(X_\alpha^i,M)$ and $\Ker(X_\alpha^i,M)$ are
uniquely determined by the class
$\overline{\alpha}=[\lambda_1:\ldots:\lambda_r]$ in $\mathbb{P}^{r-1}(k)$.
In what follows, we shall often find it convenient to use the closed point
$\overline{\alpha}\in\mathbb{P}^{r-1}(k)$ to parameterise the action of
the non-zero element $X_\alpha$ on $M$.
\end{notation}

The following two lemmas were instrumental in the proof of Proposition 2.1
of \cite{Benson/Pevtsova:vb}. We provide details here, because the
same reasoning will be used later when we examine the behaviour of vector
bundles with respect to submodules.

\begin{lemma}\label{firstlemma}
Let $M$ be a $kE$-module and suppose that the rank of $X_\alpha^i$
acting as a $k$-linear endomorphism of $M$ is independent of the choice
of $\,\overline{\alpha}\in\mathbb{P}^{r-1}(k)$ for some $i\geq 0$.
Then the coherent sheaf $\,\Img\theta_M^i$ is locally free. Moreover,
the fibre of the short exact sequence
\[
\xymatrix{
0 \ar@{>}[r] & \Img\theta_M^i \ar@{>}[r] & \widetilde{M} \ar@{>}[r] &
\Coker\theta_M^i \ar@{>}[r] & 0
}
\]
at a point $\,\overline{\alpha}\in\mathbb{P}^{r-1}(k)$ may be identified
with the natural short exact sequence
\[
\xymatrix{
0 \ar@{>}[r] & \Img(X_\alpha^i,M) \ar@{>}[r] & M \ar@{>}[r] &
\Coker(X_\alpha^i,M) \ar@{>}[r] & 0.
}
\]
\end{lemma}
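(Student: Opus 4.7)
The plan is to apply Lemma \ref{rankcokernellemma} directly to the morphism $\theta_M^i \colon \widetilde{M}(-i) \to \widetilde{M}$ of locally free sheaves on $\mathbb{P}^{r-1}$, and then to extract the fibre description by tensoring the resulting short exact sequence with the residue field at $\overline{\alpha}$.

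First I would unwind the definition of $\theta_M$ recalled in Section 2 in order to identify the fibre map $\theta_M^i \otimes k(\overline{\alpha}) \colon M \to M$ with the $k$-linear endomorphism $X_\alpha^i$ of $M$, up to a non-zero scalar. This is immediate from the formula $\theta_M = \sum_j X_j \otimes Y_j$ together with the observation that specialising the homogeneous coordinates $Y_j$ at a point $\overline{\alpha}=[\lambda_1:\cdots:\lambda_r]$ returns the scalars $\lambda_j$. Consequently, the rank of the fibre map equals $\rank(X_\alpha^i)$, which by hypothesis is independent of $\overline{\alpha} \in \mathbb{P}^{r-1}(k)$. Lemma \ref{rankcokernellemma} therefore yields that $\Coker\theta_M^i$ is locally free, and with it that the subsheaf $\Img\theta_M^i$ of $\widetilde{M}$ is locally free as well.

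Having established local freeness, I would tensor the short exact sequence
\[
0 \to \Img\theta_M^i \to \widetilde{M} \to \Coker\theta_M^i \to 0
\]
with the residue field $k(\overline{\alpha})$. Since $\Coker\theta_M^i$ is locally free at $\overline{\alpha}$, the relevant $\Tor_1$ obstruction vanishes (the same mechanism as in Lemma \ref{leftexact}), and the fibred sequence remains short exact. The middle fibre is canonically $M$ since $\widetilde{M} = M \otimes_k \mathcal{O}_{\mathbb{P}^{r-1}}$. The right-hand fibre is $\Coker(X_\alpha^i,M)$, because cokernels commute with the right exact functor $-\otimes k(\overline{\alpha})$ and the fibre of $\theta_M^i$ has already been identified with a scalar multiple of $X_\alpha^i$. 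Exactness then forces the left-hand fibre to be $\Img(X_\alpha^i,M)$, with the arrow to $M$ being the usual inclusion.

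The main potential obstacle is making sure the fibre of $\Img\theta_M^i$ really is $\Img(X_\alpha^i,M)$ rather than some larger or smaller submodule of $M$; this requires both the local freeness of $\Img\theta_M^i$ (to avoid a spurious kernel when passing to fibres) and the explicit identification of the cokernel map's fibre with the quotient map $M \twoheadrightarrow \Coker(X_\alpha^i,M)$. Both points are handled by the Tor-vanishing argument above, and the rest is routine bookkeeping against the definitions.
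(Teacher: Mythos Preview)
Your proposal is correct and follows essentially the same route as the paper: apply Lemma~\ref{rankcokernellemma} to $\theta_M^i$ to get local freeness of $\Coker\theta_M^i$ and $\Img\theta_M^i$, identify the fibre of $\Coker\theta_M^i$ via right exactness, and then read off the fibre of $\Img\theta_M^i$ from exactness of the fibred sequence. The only cosmetic difference is that the paper phrases the last step as a comparison of two short exact rows in a diagram, whereas you invoke $\Tor_1$-vanishing directly; these amount to the same thing.
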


\begin{proof}
Recall that the fibre of
$\widetilde{M}(-i)\xrightarrow{\theta_M^i}\widetilde{M}$ at
$\overline{\alpha}\in\mathbb{P}^{r-1}(k)$ is the map
$M\xrightarrow{X_\alpha^i}M$, the rank of which is independent of
$\overline{\alpha}$. By Lemma \ref{rankcokernellemma}, this shows that
the coherent sheaves $\Coker\theta_M^i$ and $\Img\theta_M^i$ are locally
free.

For the statements regarding fibres, note that because the tensor
product is right exact, the exact sequence
\[
\xymatrix{
\widetilde{M}(-i) \ar@{>}[r]^{\ \ \theta_M^i} &
\widetilde{M} \ar@{>}[r] & \Coker\theta_M^i \ar@{>}[r] & 0
}
\]
gives rise to an exact sequence of vector spaces
\[
\xymatrix{
M \ar@{>}[r]^{X_\alpha^i} & M \ar@{>}[r] &
\Coker\theta_M^i\otimes
k(\overline{\alpha}) \ar@{>}[r] & 0.
}
\]
We therefore identify the fibre of $\Coker\theta_M^i$ at
$\overline{\alpha}$ with $\Coker(X_\alpha^i,M)$. Taking the fibre of
\[
\xymatrix{
0 \ar@{>}[r] & \Img\theta_M^i \ar@{>}[r] & \widetilde{M} \ar@{>}[r] &
\Coker\theta_M^i \ar@{>}[r] & 0
}
\]
at $\overline{\alpha}\in\mathbb{P}^{r-1}(k)$ and placing it in the
top row of the diagram
\[
\xymatrix{
0 \ar@{>}[r] & \Img\theta_M^i\otimes k(\overline{\alpha}) \ar@{>}[r] &
M \ar@{>}[r] \ar@{=}[d] &
\Coker\theta_M^i\otimes k(\overline{\alpha}) \ar@{>}[r] \ar@{=}[d] &
0 \\
0 \ar@{>}[r] & \Img(X_\alpha^i,M) \ar@{>}[r] & M \ar@{>}[r] &
\Coker(X_\alpha^i,M) \ar@{>}[r] & 0
}
\]
then allows us to identify the fibre of $\Img\theta_M^i$ at
$\overline{\alpha}$ with $\Img(X_\alpha^i,M)$.
\end{proof}

\begin{lemma}\label{kernelcapimage}
If the ranks of $X_\alpha^i$ and $X_\alpha^{i+1}$ acting on $M$ are both
independent of the choice of $\,\overline{\alpha}\in\mathbb{P}^{r-1}(k)$
(so that $\,\Img\theta_M^i$ and $\,\Img\theta_M^{i+1}$ are locally free),
then $\,\Ker\theta_M\cap\Img\theta_M^i$ is
also locally free, and the fibre of the short exact sequence
\[
\xymatrix{
0 \ar@{>}[r] & \Ker\theta_M\cap\Img\theta_M^i \ar@{>}[r] &
\Img\theta_M^i \ar@{>}[r]^{\theta_M\ \ \ } &
\Img\theta_M^{i+1}(1) \ar@{>}[r] & 0
}
\]
at $\,\overline{\alpha}\in\mathbb{P}^{r-1}(k)$ is the short exact
sequence of vector spaces
\[
\xymatrix{
0 \ar@{>}[r] & \Ker(X_\alpha,M)\cap\Img(X_\alpha^i,M) \ar@{>}[r] &
\Img(X_\alpha^i,M) \ar@{>}[r]^{X_\alpha\ \ } &
\Img(X_\alpha^{i+1},M) \ar@{>}[r] & 0.
}
\]
\end{lemma}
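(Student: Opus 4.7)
The argument parallels that of Lemma \ref{firstlemma}, and I would take it as my starting point: it already gives that $\Img\theta_M^i$ and $\Img\theta_M^{i+1}$ are locally free with fibres $\Img(X_\alpha^i,M)$ and $\Img(X_\alpha^{i+1},M)$ at $\overline{\alpha}$. The first step is to exhibit the claimed short exact sequence intrinsically. The operator $\theta_M\colon\widetilde{M}\to\widetilde{M}(1)$ restricts to a morphism $\Img\theta_M^i\to\widetilde{M}(1)$, and on local sections $\theta_M^i(t)\in\Img\theta_M^i$ (with $t$ a local section of $\widetilde{M}(-i)$) one has $\theta_M\theta_M^i(t)=\theta_M^{i+1}(t)$, which under the reindexing of $\theta_M^{i+1}$ as $\widetilde{M}(-i)\to\widetilde{M}(1)$ lies in $\Img\theta_M^{i+1}(1)$. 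This exhibits the restriction as a surjection $\Img\theta_M^i\twoheadrightarrow\Img\theta_M^{i+1}(1)$ whose kernel is tautologically $\Ker\theta_M\cap\Img\theta_M^i$, giving the desired short exact sequence.

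Next I would verify that $\Ker\theta_M\cap\Img\theta_M^i$ is locally free. Since the right-hand term $\Img\theta_M^{i+1}(1)$ of the short exact sequence is locally free, the sequence splits in a neighbourhood of each point, exhibiting $\Ker\theta_M\cap\Img\theta_M^i$ locally as a direct summand of the locally free sheaf $\Img\theta_M^i$; hence it is itself locally free.

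To identify the fibres, I would apply $-\otimes_{\mathcal{O}_{\mathbb{P}^{r-1}}}k(\overline{\alpha})$ to the sequence. The flatness of $\Img\theta_M^{i+1}(1)$ preserves exactness, and by Lemma \ref{firstlemma} the fibres of $\Img\theta_M^i$ and $\Img\theta_M^{i+1}(1)$ are identified with $\Img(X_\alpha^i,M)$ and $\Img(X_\alpha^{i+1},M)$ respectively. Since the fibre of $\theta_M$ at $\overline{\alpha}$ recovers $X_\alpha$ up to a scalar factor (as recalled in the preamble to the functors $\mathcal{F}_i$), the induced map between these fibres is $X_\alpha$. Overlaying the resulting short exact sequence with the natural one
\[
0\to\Ker(X_\alpha,M)\cap\Img(X_\alpha^i,M)\to\Img(X_\alpha^i,M)\xrightarrow{X_\alpha}\Img(X_\alpha^{i+1},M)\to 0
\]
and using that the two right-hand maps agree forces the identification of the leftmost terms, which is the stated conclusion.

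The main obstacle is the bookkeeping with the twist by $\mathcal{O}(1)$: making it explicit that the restriction of $\theta_M$ to $\Img\theta_M^i$ has image exactly $\Img\theta_M^{i+1}(1)$ (rather than landing in $\widetilde{M}(1)$ or in an inconvenient twist of $\Img\theta_M^{i+1}$) requires careful attention to indexing. Once that is pinned down, the remaining steps are formal consequences of Lemma \ref{firstlemma} and standard properties of locally free sheaves.
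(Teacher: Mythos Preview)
Your proposal is correct and follows essentially the same route as the paper. The paper packages the identification of the fibre of $\theta_M\colon\Img\theta_M^i\to\Img\theta_M^{i+1}(1)$ with $X_\alpha\colon\Img(X_\alpha^i,M)\to\Img(X_\alpha^{i+1},M)$ into a single commutative diagram of short exact sequences (the rows being those of Lemma~\ref{firstlemma} for $i$ and $i+1$, linked by $\theta_M$), and then defers the remaining steps to Proposition~2.1 of \cite{Benson/Pevtsova:vb}; your argument unpacks those same steps explicitly, using local splitting and flatness in place of the reference.
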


\begin{proof}
Consider the diagram of short exact sequences of locally free sheaves
\[
\xymatrix{
0 \ar@{>}[r] & \Img\theta_M^i \ar@{>}[r] \ar@{>}[d] &
\widetilde{M} \ar@{>}[r] \ar@{>}[d]^{\theta_M} &
\Coker\theta_M^i \ar@{>}[r] \ar@{>}[d] & 0 \\
0 \ar@{>}[r] & \Img\theta_M^{i+1}(1) \ar@{>}[r] &
\widetilde{M}(1) \ar@{>}[r] & \Coker\theta_M^{i+1}(1) \ar@{>}[r] & 0
}
\]
whose fibre at $\overline{\alpha}\in\mathbb{P}^{r-1}(k)$ is
\[
\xymatrix{
0 \ar@{>}[r] & \Img(X_\alpha^i,M) \ar@{>}[r] \ar@{>}[d] &
M \ar@{>}[r] \ar@{>}[d]^{X_\alpha} &
\Coker(X_\alpha^i,M) \ar@{>}[r] \ar@{>}[d] & 0 \\
0 \ar@{>}[r] & \Img(X_\alpha^{i+1},M) \ar@{>}[r] &
M \ar@{>}[r] & \Coker(X_\alpha^{i+1},M) \ar@{>}[r] & 0.
}
\]
This immediately shows that the fibre of
$\theta_M\colon\Img\theta_M^i\rightarrow\Img\theta_M^{i+1}(1)$ at
$\overline{\alpha}$ is the induced map
$X_\alpha\colon\Img(X_\alpha^i,M)\rightarrow\Img(X_\alpha^{i+1},M)$.
The proof now follows as described in Proposition 2.1 of
\cite{Benson/Pevtsova:vb}.
\end{proof}

Now let $M$ be a $kE$-module and $N$ any submodule of $M$.
The inclusion $N\subseteq M$ induces an inclusion of
locally free sheaves $\widetilde{N}\subseteq\widetilde{M}$, and one
readily confirms that the diagrams
\[
\xymatrix{
\widetilde{N}(n) \ar@{>}[r]^{\theta_N\ \ \ } \ar@{^{(}->}[d] &
\widetilde{N}(n+1) \ar@{^{(}->}[d] \\
\widetilde{M}(n) \ar@{>}[r]^{\theta_M\ \ \ } & \widetilde{M}(n+1)
}
\]
commute. It follows that $\Img\theta_N^i\subseteq\Img\theta_M^i$
and $\Ker\theta_N^i\subseteq\Ker\theta_M^i$ for each $i\geq 0$.

The following proposition will be the essential step in showing that
$\mathcal{F}_i(N)=\mathcal{F}_i(M)$ in certain cases.

\begin{proposition}\label{fibresofinclusion}
If $M$ and $N$ both have constant $i$- and $(i+1)$-rank,
then the fibre of the natural inclusion
$\Ker\theta_N\cap\Img\theta_N^i\subseteq\Ker\theta_M\cap\Img\theta_M^i$
at a point $\overline{\alpha}\in\mathbb{P}^{r-1}(k)$ is the
inclusion of vector spaces
$\,\Ker(X_\alpha,N)\cap\Img(X_\alpha^i,N)\subseteq
\Ker(X_\alpha,M)\cap\Img(X_\alpha^i,M)$.
\end{proposition}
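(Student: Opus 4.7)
The plan is to promote Lemma \ref{kernelcapimage} to a naturality statement by comparing its constructions for $N$ and $M$ side by side. Applying Lemma \ref{kernelcapimage} to both modules and linking the two resulting short exact sequences by the canonical inclusions induced by $\widetilde{N}\subseteq\widetilde{M}$ (which in turn is induced by $N\subseteq M$) produces the commutative ladder
\[
\xymatrix @C=0.3in{
0 \ar@{>}[r] & \Ker\theta_N\cap\Img\theta_N^i \ar@{>}[r] \ar@{>}[d] & \Img\theta_N^i \ar@{>}[r]^{\theta_N\ \ \ } \ar@{>}[d] & \Img\theta_N^{i+1}(1) \ar@{>}[r] \ar@{>}[d] & 0 \\
0 \ar@{>}[r] & \Ker\theta_M\cap\Img\theta_M^i \ar@{>}[r] & \Img\theta_M^i \ar@{>}[r]^{\theta_M\ \ \ } & \Img\theta_M^{i+1}(1) \ar@{>}[r] & 0
}
\]
in which, by the constant rank hypotheses, all six sheaves are locally free.

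Next, I would take fibres at $\overline{\alpha}\in\mathbb{P}^{r-1}(k)$. Because $\Img\theta_N^{i+1}(1)$ and $\Img\theta_M^{i+1}(1)$ are locally free, applying Lemma \ref{leftexact} to the closed immersion $\{\overline{\alpha}\}\hookrightarrow\mathbb{P}^{r-1}$ keeps both rows short exact. Invoking Lemma \ref{firstlemma} to identify the fibres of $\Img\theta_N^j$ and $\Img\theta_M^j$ with $\Img(X_\alpha^j,N)$ and $\Img(X_\alpha^j,M)$ for $j=i,i+1$, and then reading off the leftmost terms via Lemma \ref{kernelcapimage}, produces a commutative diagram of short exact sequences of vector spaces
\[
\xymatrix @C=0.15in{
0 \ar@{>}[r] & \Ker(X_\alpha,N)\cap\Img(X_\alpha^i,N) \ar@{>}[r] \ar@{>}[d] & \Img(X_\alpha^i,N) \ar@{>}[r]^{X_\alpha\ \ } \ar@{>}[d] & \Img(X_\alpha^{i+1},N) \ar@{>}[r] \ar@{>}[d] & 0 \\
0 \ar@{>}[r] & \Ker(X_\alpha,M)\cap\Img(X_\alpha^i,M) \ar@{>}[r] & \Img(X_\alpha^i,M) \ar@{>}[r]^{X_\alpha\ \ } & \Img(X_\alpha^{i+1},M) \ar@{>}[r] & 0
}
\]
whose leftmost column is precisely the fibre of the map under consideration.

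The only remaining point, which I expect to be the main (though not deep) obstacle, is to verify that the two right-hand vertical arrows in the displayed fibre diagram are the evident subspace inclusions. This can be settled by comparing Lemma \ref{firstlemma} applied to the two composite chains $\Img\theta_N^j\hookrightarrow\widetilde{N}\hookrightarrow\widetilde{M}$ and $\Img\theta_N^j\hookrightarrow\Img\theta_M^j\hookrightarrow\widetilde{M}$. Since $\widetilde{M}/\widetilde{N}\cong\widetilde{M/N}$ is locally free, Lemma \ref{leftexact} guarantees that the fibre functor is exact on the inclusion $\widetilde{N}\hookrightarrow\widetilde{M}$, so the first chain fibres to the natural inclusion $\Img(X_\alpha^j,N)\hookrightarrow M$. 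The second chain fibres to the same composite, and since $\Img(X_\alpha^j,M)\hookrightarrow M$ is injective, this forces the factor $\Img(X_\alpha^j,N)\to\Img(X_\alpha^j,M)$ to be the canonical inclusion. Once the right-hand columns are so identified, a trivial diagram chase in the diagram of short exact sequences above identifies the leftmost column as the required inclusion $\Ker(X_\alpha,N)\cap\Img(X_\alpha^i,N)\subseteq\Ker(X_\alpha,M)\cap\Img(X_\alpha^i,M)$, completing the proof.
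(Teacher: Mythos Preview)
Your proposal is correct and follows essentially the same route as the paper's proof: both set up the commutative ladder of short exact sequences with rows $0\to\Ker\theta\cap\Img\theta^i\to\Img\theta^i\xrightarrow{\theta}\Img\theta^{i+1}(1)\to 0$ for $N$ and $M$, take fibres (using local freeness to preserve exactness), and then reduce everything to identifying the fibre of $\Img\theta_N^j\hookrightarrow\Img\theta_M^j$ with the natural inclusion $\Img(X_\alpha^j,N)\subseteq\Img(X_\alpha^j,M)$. The only cosmetic difference is that the paper carries out this last identification by writing down the full ladder $0\to\Img\theta^j\to\widetilde{(-)}\to\Coker\theta^j\to 0$ for $N$ and $M$ and reading off the leftmost column of its fibre, whereas you argue by comparing the two factorizations of $\Img\theta_N^j\hookrightarrow\widetilde{M}$ through $\widetilde{N}$ and through $\Img\theta_M^j$; these are equivalent arguments.
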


\begin{proof}
Since both $M$ and $N$ have constant $i$-rank, Lemma \ref{firstlemma}
tells us that the fibre of the diagram of short exact sequences
\[
\xymatrix{
0 \ar@{>}[r] & \Img\theta_N^i \ar@{>}[r] \ar@{^{(}->}[d] &
\widetilde{N} \ar@{>}[r] \ar@{^{(}->}[d] &
\Coker\theta_N^i \ar@{>}[r] \ar@{>}[d] & 0 \\
0 \ar@{>}[r] & \Img\theta_M^i \ar@{>}[r] &
\widetilde{M} \ar@{>}[r] & \Coker\theta_M^i \ar@{>}[r] & 0
}
\]
at a point $\overline{\alpha}\in\mathbb{P}^{r-1}(k)$ is
\[
\xymatrix{
0 \ar@{>}[r] & \Img(X_\alpha^i,N) \ar@{>}[r] \ar@{>}[d] &
N \ar@{>}[r] \ar@{>}[d] &
\Coker(X_\alpha^i,N) \ar@{>}[r] \ar@{>}[d] & 0 \\
0 \ar@{>}[r] & \Img(X_\alpha^i,M) \ar@{>}[r] &
M \ar@{>}[r] & \Coker(X_\alpha^i,M) \ar@{>}[r] & 0,
}
\]
where the middle map is the inclusion $N\subseteq M$. This implies
that the fibre of the inclusion $\Img\theta_N^i\subseteq\Img\theta_M^i$
at $\overline{\alpha}$ is the natural inclusion
$\Img(X_\alpha^i,N)\subseteq\Img(X_\alpha^i,M)$. The same argument also
shows that the fibre of $\Img\theta_N^{i+1}\subseteq\Img\theta_M^{i+1}$
is $\Img(X_\alpha^{i+1},N)\subseteq\Img(X_\alpha^{i+1},M)$.

Now consider the diagram of short exact sequences
\[
\xymatrix{
0 \ar@{>}[r] &
\Ker\theta_N\cap\Img\theta_N^i \ar@{>}[r] \ar@{^{(}->}[d] &
\Img\theta_N^i \ar@{>}[r]^{\theta_N\ \ \ } \ar@{^{(}->}[d] &
\Img\theta_N^{i+1}(1) \ar@{>}[r] \ar@{^{(}->}[d] & 0 \\
0 \ar@{>}[r] & \Ker\theta_M\cap\Img\theta_M^i \ar@{>}[r] &
\Img\theta_M^i \ar@{>}[r]^{\theta_M\ \ \ } &
\Img\theta_M^{i+1}(1) \ar@{>}[r] & 0
}
\]
whose terms are all locally free, and whose fibre at a point
$\overline{\alpha}\in\mathbb{P}^{r-1}(k)$ is
\[
\xymatrix{
0 \ar@{>}[r] &
\Ker(X_\alpha,N)\cap\Img(X_\alpha^i,N) \ar@{>}[r] \ar@{>}[d] &
\Img(X_\alpha^i,N) \ar@{>}[r]^{X_\alpha\ \ } \ar@{^{(}->}[d] &
\Img(X_\alpha^{i+1},N) \ar@{>}[r] \ar@{^{(}->}[d] & 0 \\
0 \ar@{>}[r] & \Ker(X_\alpha,M)\cap\Img(X_\alpha^i,M) \ar@{>}[r] &
\Img(X_\alpha^i,M) \ar@{>}[r]^{X_\alpha\ \ } &
\Img(X_\alpha^{i+1},M) \ar@{>}[r] & 0.
}
\]
In light of the above remarks, the right two maps are the natural
inclusions $\Img(X_\alpha^i,N)\subseteq\Img(X_\alpha^i,M)$ and
$\Img(X_\alpha^{i+1},N)\subseteq\Img(X_\alpha^{i+1},M)$, respectively.
This forces the map on the left to be the inclusion
$\Ker(X_\alpha,N)\cap\Img(X_\alpha^i,N)\subseteq
\Ker(X_\alpha,M)\cap\Img(X_\alpha^i,M)$.
\end{proof}

\section{The equal images property and vector bundles}

In this section we recall the notion of the equal images property for
$kE$-modules. For such a module $M$, we introduce an inductive procedure
for computing the vector bundles $\mathcal{F}_i(M)$. As was the case in
Section \ref{pullbacksection}, there is a close relationship between the
structure of such modules and the geometry of $\mathbb{P}^{r-1}(k)$.
Later in the section, we use this procedure to compute the vector bundles
for so-called `$W$-modules' in the case $r=2$. The following definition
first appeared in \cite{Carlson/Friedlander/Suslin:2011a}.

\begin{definition}\label{eipdefinition}
A $kE$-module $M$ has the \emph{equal images property} if the image of
$X_\alpha$ acting on $M$ is independent of the choice of
$\overline{\alpha}\in\mathbb{P}^{r-1}(k)$.
\end{definition}

A useful characterisation of the equal images property is the following,
which appeared as Proposition 2.5 of \cite{Carlson/Friedlander/Suslin:2011a}.

\begin{proposition}\label{eipradicals}
A $kE$-module $M$ has the equal images property if and only if the
image of $X_\alpha$ acting on $M$ is equal to $\Rad(M)$ for all
$\overline{\alpha}\in\mathbb{P}^{r-1}(k)$.
\end{proposition}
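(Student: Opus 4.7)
The plan is to prove both implications directly from the definitions, the backward direction being essentially trivial and the forward direction being a short two-inclusion argument.

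For the backward direction, suppose $\Img(X_\alpha,M)=\Rad(M)$ for every $\overline{\alpha}\in\mathbb{P}^{r-1}(k)$. Then the common value $\Rad(M)$ witnesses the equal images property directly, so there is nothing further to do.

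For the forward direction, assume $M$ has the equal images property, and let $N=\Img(X_\alpha,M)$, which by hypothesis is a submodule of $M$ that does not depend on the choice of nonzero $\alpha$. I would prove $N=\Rad(M)$ by two inclusions. First, for $N\subseteq\Rad(M)$: since $X_\alpha=\sum_i\lambda_iX_i$ lies in $\Rad(kE)$ for every $\alpha$ (recall that $\Rad(kE)$ is generated by $X_1,\dots,X_r$, as noted in Section 2), we have
\[
N=X_\alpha M\subseteq \Rad(kE)\cdot M=\Rad(M).
\]
Second, for $\Rad(M)\subseteq N$: specialise $\alpha$ to each of the standard basis vectors $e_1,\ldots,e_r\in\mathbb{A}^r(k)$, all of which are nonzero and hence produce the same image $N$. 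Thus $X_iM=N$ for every $i=1,\ldots,r$, and since $\Rad(kE)$ is generated as a (two-sided) ideal by $X_1,\ldots,X_r$,
\[
\Rad(M)=\Rad(kE)\cdot M=\sum_{i=1}^r X_iM=N.
\]

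There is really no substantial obstacle here; the only step requiring any thought is recognising that one should specialise to the standard basis vectors in order to recover all of $\Rad(M)$ from individual images $X_\alpha M$. The argument uses nothing beyond the identification of $kE$ with $k[X_1,\ldots,X_r]/(X_1^p,\ldots,X_r^p)$ and the fact that $\Rad(kE)=(X_1,\ldots,X_r)$ recorded in Section 2.
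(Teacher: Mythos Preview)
Your proof is correct. Note, however, that the paper does not actually supply a proof of this proposition; it simply records the statement and cites it as Proposition~2.5 of \cite{Carlson/Friedlander/Suslin:2011a}. Your argument is the standard elementary one and matches what appears in that reference: the backward implication is trivial, and for the forward implication one uses $X_\alpha\in\Rad(kE)$ to get $N\subseteq\Rad(M)$, then specialises to the coordinate directions $e_1,\ldots,e_r$ to recover $\Rad(M)=\sum_i X_iM\subseteq N$.
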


We remark that the equal images property is rather strong. In particular,
if $M$ has the equal images property, then $M$ has constant Jordan type,
although the converse does not necessarily hold.
(See Proposition 2.8 of \cite{Carlson/Friedlander/Suslin:2011a}.)
We provide the following brief summary of the salient points in Section 2
of \cite{Carlson/Friedlander/Suslin:2011a}.

\begin{proposition}\label{eipproperties}
The class of $kE$-modules with the equal images property is closed under
taking direct sums, quotients and radicals.
\end{proposition}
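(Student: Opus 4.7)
The plan is to verify each of the three closure properties by directly computing $X_\alpha$-images and appealing to the radical characterisation of the equal images property given in Proposition \ref{eipradicals}. In other words, it suffices to check in each case that $\Img(X_\alpha, -)$ equals the radical of the module in question, and then the independence from $\overline{\alpha}$ is automatic.

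For direct sums, if $M$ and $N$ both have the equal images property, I would simply observe that $X_\alpha$ acts componentwise, so
\[
\Img(X_\alpha, M\oplus N)=\Img(X_\alpha,M)\oplus\Img(X_\alpha,N)=\Rad(M)\oplus\Rad(N)=\Rad(M\oplus N),
\]
which is manifestly independent of $\overline{\alpha}$. For quotients, given a submodule $N\subseteq M$ with $M$ having the equal images property, I would use the fact that $\Rad(M/N)=(\Rad(M)+N)/N$ (since the radical is $J(kE)\cdot(-)$) and compute
\[
\Img(X_\alpha,M/N)=(X_\alpha M+N)/N=(\Rad(M)+N)/N=\Rad(M/N).
\]

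The radical case is the most interesting and is where I expect the only real thought is required. I want to show that $\Img(X_\beta,\Rad(M))$ is independent of $\overline{\beta}$. Fixing any auxiliary $\overline{\alpha_0}$ and using the equal images property of $M$ to write $\Rad(M)=X_{\alpha_0}M$, I would then invoke the commutativity of $kE$ (the elements $X_i$ pairwise commute, hence so do all $X_\alpha$ and $X_\beta$) to get
\[
X_\beta\,\Rad(M)=X_\beta X_{\alpha_0}M=X_{\alpha_0}X_\beta M=X_{\alpha_0}\Rad(M),
\]
where the final equality uses $X_\beta M=\Rad(M)$ for every $\beta$. Since the right hand side no longer depends on $\beta$, this gives the equal images property for $\Rad(M)$ (and incidentally identifies the common image as $\Rad^2(M)$).

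The only potential pitfall is the quotient step, where one must be careful that $\Rad$ of a quotient is the image of the original radical rather than some larger submodule; but this is just the standard fact $\Rad(M/N)=(JM+N)/N$ for $J=J(kE)$, so there is no real obstacle. The entire argument is short and relies only on Proposition \ref{eipradicals} and the commutativity of the generators $X_1,\dots,X_r$.
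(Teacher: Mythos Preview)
Your proof is correct. The paper does not actually prove this proposition; it merely records it as a summary of results from Section~2 of \cite{Carlson/Friedlander/Suslin:2011a}, so your self-contained argument using Proposition~\ref{eipradicals} and the commutativity of the $X_\alpha$ supplies more than the paper does.
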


The following is our main result regarding the inductive nature of
the functors $\mathcal{F}_i$ evaluated at modules having the equal images
property.

\begin{theorem}\label{eipradicalvb}
If $M$ is a $kE$-module with the equal images property, then for all
$\,0\leq j<i\leq p$ we have
$\mathcal{F}_i(M)\cong\mathcal{F}_{i-j}(\Rad^j(M))$.
\end{theorem}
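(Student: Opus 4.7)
The plan is to identify $\Img\theta_M^{\ell}$ with $\widetilde{\Rad^{\ell} M}$ as subsheaves of $\widetilde{M}$ for all $\ell\geq 0$; once this is in hand, the data defining $\mathcal{F}_{i-j}(\Rad^j M)$ and $\mathcal{F}_i(M)$ will match piece by piece inside $\widetilde{M}$, and the isomorphism will follow.

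First I would record the consequences of EIP. By Proposition~\ref{eipproperties}, every $\Rad^j M$ again has the equal images property, so iterating Proposition~\ref{eipradicals} gives $X_\alpha^k\Rad^j M=\Rad^{j+k}M$ for every nonzero $\alpha$ and all $k\geq 0$. In particular $M$ has constant $\ell$-rank for all $\ell$, so Lemma~\ref{firstlemma} guarantees that each $\Img\theta_M^{\ell}$ is locally free and that its fibre at every closed point $\overline{\alpha}\in\mathbb{P}^{r-1}(k)$ is the subspace $\Rad^{\ell}M\subseteq M$.

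Next I would establish the key identification $\Img\theta_M^{\ell}=\widetilde{\Rad^{\ell}M}$ as subsheaves of $\widetilde{M}$. One inclusion is formal: a local section of $\Img\theta_M^{\ell}$ is a $k[Y_1,\ldots,Y_r]$-linear combination of elements of the form $X_{i_1}\cdots X_{i_{\ell}}m\otimes Y_{i_1}\cdots Y_{i_{\ell}}f$, whose $M$-components all lie in $\Rad^{\ell}M$. For the reverse inclusion, note that $\widetilde{\Rad^{\ell}M}$ is trivially locally free with fibre $\Rad^{\ell}M$, and its fibrewise inclusion into $\widetilde{M}$ is the inclusion $\Rad^{\ell}M\subseteq M$, which agrees with the fibrewise inclusion of $\Img\theta_M^{\ell}$. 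Hence the quotient $\widetilde{\Rad^{\ell}M}/\Img\theta_M^{\ell}$ is a coherent sheaf with vanishing fibres everywhere and is therefore zero. I expect this sheaf-level identification, which crucially uses EIP to force the two fibres to coincide, to be the main obstacle.

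With the identification in hand, $\theta_M$ restricts to $\theta_{\Rad^j M}$ on $\widetilde{\Rad^j M}=\Img\theta_M^j$, so for any $k\geq 0$
\[
\Img\theta_{\Rad^j M}^{\,k}=\theta_M^{k}\bigl(\widetilde{\Rad^j M}\bigr)=\theta_M^{k}\bigl(\Img\theta_M^{j}\bigr)=\Img\theta_M^{j+k}
\]
as subsheaves of $\widetilde{M}$. Specialising to $k=i-j-1$ and $k=i-j$ (both permissible since $0\leq j<i$) yields $\Img\theta_{\Rad^j M}^{\,i-j-1}=\Img\theta_M^{i-1}$ and $\Img\theta_{\Rad^j M}^{\,i-j}=\Img\theta_M^{i}$. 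Since $\Ker\theta_{\Rad^j M}=\Ker\theta_M\cap\widetilde{\Rad^j M}$ and $\Img\theta_M^{i-1}\subseteq\widetilde{\Rad^{i-1}M}\subseteq\widetilde{\Rad^j M}$ (because $i-1\geq j$), intersecting with $\Ker\theta_M$ gives
\[
\Ker\theta_{\Rad^j M}\cap\Img\theta_{\Rad^j M}^{\,i-j-1}=\Ker\theta_M\cap\Img\theta_M^{i-1},
\]
and analogously with $i-j$ in place of $i-j-1$. Passing to the quotient delivers $\mathcal{F}_{i-j}(\Rad^j M)\cong\mathcal{F}_i(M)$.
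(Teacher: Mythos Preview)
Your argument is correct and is essentially the paper's proof reorganised: the paper phrases the key step as the surjectivity of the factored map $\widehat{\theta_M^j}\colon\widetilde{M}(-i)\to\widetilde{\Rad^j M}(-i+j)$ (checked on fibres via EIP), which is exactly your identification $\Img\theta_M^j=\widetilde{\Rad^j M}$, and then both proofs derive $\Img\theta_{\Rad^j M}^{\,i-j}=\Img\theta_M^{i}$ and match the kernel pieces in the same way. Your front-loading of the identity $\Img\theta_M^{\ell}=\widetilde{\Rad^{\ell}M}$ for all $\ell$ makes the image and kernel bookkeeping a bit cleaner, but the content is the same.
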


\begin{proof}
By the construction of $\theta_M$, the map
$\theta_M^j\colon\widetilde{M}(-i)\rightarrow\widetilde{M}(-i+j)$
factors through the vector bundle $\widetilde{\Rad^j(M)}(-i+j)$. Let
$\widehat{\theta_M^j}$ denote the induced map
\[
\xymatrix{
\widetilde{M}(-i) \ar@{>}[r] & \widetilde{\Rad^j(M)}(-i+j).
}
\]
We then have a commutative diagram
\[
\xymatrix @C=0.7in{
\widetilde{M}(-i) \ar@{>}[r]^{\widehat{\theta_M^j}\ \ \ \ \ \ \ }
\ar@{=}[d] &
\widetilde{\Rad^j(M)}(-i+j) \ar@{>}[r]^{\ \ \ \ \ \theta_{\Rad^j(M)}^{i-j}}
\ar@{^{(}->}[d] &
\widetilde{\Rad^j(M)} \ar@{^{(}->}[d] \\
\widetilde{M}(-i) \ar@{>}[r]^{\theta_M^j\ \ } &
\widetilde{M}(-i+j) \ar@{>}[r]^{\ \ \ \ \ \theta_M^{i-j}} &
\widetilde{M}
}
\]
where the right two vertical arrows are those induced by the inclusion
$\Rad^j(M)\subseteq M$.  We claim that $\widehat{\theta_M^j}$ is surjective,
from which it will follow that the image of $\theta_{\Rad^j(M)}^{i-j}$
equals that of $\theta_M^i$. To see this, note that because $\theta_M^j$
is a map of vector bundles and the middle vertical arrow is an injection,
$\widehat{\theta_M^j}$ is also a map of vector bundles.
This also allows one to conclude that the fibre of $\widehat{\theta_M^j}$ at a
point $\overline{\alpha}\in\mathbb{P}^{r-1}(k)$ is the linear
map $X_\alpha^j\colon M\rightarrow\Rad^j(M)$. Since $M$ has the
equal images property, Propositions \ref{eipradicals} and \ref{eipproperties}
imply that this map is always surjective. In other words,
$\widehat{\theta_M^j}$ is a map of vector bundles that
is surjective on fibres, hence surjective.

We next claim that there is an equality
\[
\Ker\theta_{\Rad^j(M)}\cap\Img\theta_{\Rad^j(M)}^{i-j}=
\Ker\theta_M\cap\Img\theta_{\Rad^j(M)}^{i-j}
\]
as subsheaves of $\widetilde{M}$. There is an obvious rightwards inclusion
induced by the inclusion of modules $\Rad^j(M)\subseteq M$. The reverse containment
follows from the fact that the left hand side is the kernel of
$\theta_{\Rad^j(M)}\colon\Img\theta_{\Rad^j(M)}^{i-j}\rightarrow
\Img\theta_{\Rad^j(M)}^{i-j+1}(1)$ which, after precomposing with the
inclusion $\Ker\theta_M\cap\Img\theta_{\Rad^j(M)}^{i-j}\hookrightarrow
\Img\theta_{\Rad^j(M)}^{i-j}$, yields the zero map.

Putting this all together, we therefore have
\[
\begin{aligned}
\mathcal{F}_i(M) &=
\frac{\Ker\theta_M\cap\Img\theta_M^{i-1}}
{\Ker\theta_M\cap\Img\theta_M^i}\cong
\frac{\Ker\theta_M\cap\Img\theta_{\Rad^j(M)}^{i-j-1}}
{\Ker\theta_M\cap\Img\theta_{\Rad^j(M)}^{i-j}}=
\frac{\Ker\theta_{\Rad^j(M)}\cap\Img\theta_{\Rad^j(M)}^{i-j-1}}
{\Ker\theta_{\Rad^j(M)}\cap\Img\theta_{\Rad^j(M)}^{i-j}} \\
&= \mathcal{F}_{i-j}(\Rad^j(M))
\end{aligned}
\]
as required.
\end{proof}

Before giving an application of Theorem \ref{eipradicalvb}, we
recall the theory of Chern classes and how they interact with
the functors $\mathcal{F}_i$. Note that the Chow ring
$A^*(\mathbb{P}^{r-1}(k))$ of projective space is isomorphic to the
truncated polynomial ring $\mathbb{Z}[h]/h^r$. If $\mathcal{F}$ is
a vector bundle on $\mathbb{P}^{r-1}(k)$, then the \emph{Chern class}
of $\mathcal{F}$ is the well defined polynomial class
\[
c(\mathcal{F})=1+c_1(\mathcal{F})h+\cdots+c_{r-1}(\mathcal{F})h^{r-1}\in
A^*(\mathbb{P}^{r-1}(k))
\]
characterised by the following properties. (See Chapters 3 and 4 of
\cite{Fulton:1984a} for details.)
\begin{itemize}
\item[(1)] $c_i(\mathcal{F})=0$ for all $i\geq\rank(\mathcal{F})$.
\item[(2)] If $0\rightarrow\mathcal{F}_1\rightarrow\mathcal{F}_2\rightarrow
\mathcal{F}_3\rightarrow 0$ is a short exact sequence of vector bundles
on $\mathbb{P}^{r-1}(k)$, then
$c(\mathcal{F}_2)=c(\mathcal{F}_1)c(\mathcal{F}_3)$.
\item[(3)] $c(\mathcal{O}(n))=1+nh$ for all $n\in\mathbb{Z}$.
\end{itemize}
The integers $c_i(\mathcal{F})$ are called the \emph{Chern numbers} of
$\mathcal{F}$.
We record the formula for Chern numbers of twists, which follows from
Example 3.2.2 of \cite{Fulton:1984a}.

\begin{lemma}\label{chernnumbersoftwists}
If $\mathcal{F}$ is a vector bundle on $\mathbb{P}^{r-1}(k)$, then for all
$n\in\mathbb{Z}$, the $i$th Chern number of $\mathcal{F}(n)$ is given by
\[
c_i(\mathcal{F}(n))=\sum_{j=0}^i n^j
\binom{\rank(\mathcal{F})-i+j}{j}c_{i-j}(\mathcal{F}).
\]
\end{lemma}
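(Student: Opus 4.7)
The plan is a short, direct computation in the Chow ring $A^*(\mathbb{P}^{r-1}(k))\cong\mathbb{Z}[h]/h^r$, using the splitting principle together with the multiplicativity properties (2) and (3) of the total Chern class. Write $e=\rank(\mathcal{F})$. By the splitting principle, one may pull back to an auxiliary scheme on which $\mathcal{F}$ acquires formal \emph{Chern roots} $\alpha_1,\ldots,\alpha_e$ satisfying $c(\mathcal{F})=\prod_{k=1}^{e}(1+\alpha_k)$; in particular, $c_j(\mathcal{F})h^j$ is the $j$-th elementary symmetric polynomial $e_j(\alpha_1,\ldots,\alpha_e)$.

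By property (3), $c_1(\mathcal{O}(n))=nh$; combined with the fact that tensoring by a line bundle adds its first Chern class to every Chern root (a special case of multiplicativity), each $\alpha_k$ gets replaced by $\alpha_k+nh$ when passing to $\mathcal{F}(n)=\mathcal{F}\otimes\mathcal{O}(n)$. Multiplicativity of $c$ then gives
\[
c(\mathcal{F}(n))=\prod_{k=1}^{e}(1+nh+\alpha_k).
\]
Setting $x=1+nh$ in the polynomial identity $\prod_{k=1}^{e}(x+\alpha_k)=\sum_{j=0}^{e}x^{e-j}e_j(\alpha_1,\ldots,\alpha_e)$ converts this to
\[
c(\mathcal{F}(n))=\sum_{j=0}^{e}(1+nh)^{e-j}\,c_j(\mathcal{F})\,h^j.
\]

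The remaining step is to extract the coefficient of $h^i$. Expanding $(1+nh)^{e-j}=\sum_{l\geq 0}\binom{e-j}{l}n^lh^l$ by the binomial theorem and collecting the contributions with $l+j=i$ yields
\[
c_i(\mathcal{F}(n))=\sum_{j=0}^{i}\binom{e-j}{i-j}\,n^{i-j}\,c_j(\mathcal{F}),
\]
and the reindexing $j\mapsto i-j$ produces the asserted formula. There is no real obstacle here: the argument is one elementary symmetric-polynomial identity followed by the binomial theorem, with the only care required being the index bookkeeping in the final step. An entirely self-contained alternative proceeds by induction on $e$, applying property (2) to a short exact sequence $0\to\mathcal{L}\to\mathcal{F}\to\mathcal{F}'\to 0$ with $\mathcal{L}$ a line bundle (which exists after suitable pullback), and reduces the general case to the line-bundle case $c(\mathcal{L}(n))=1+(c_1(\mathcal{L})+nh)$ handled directly by property (3).
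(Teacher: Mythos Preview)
Your argument is correct: the splitting principle reduces the total Chern class of $\mathcal{F}(n)$ to $\prod_{k=1}^{e}(1+nh+\alpha_k)$, and the remainder is elementary symmetric-function and binomial bookkeeping, which you carry out accurately (including the final reindexing). The paper does not supply its own proof of this lemma; it simply records the formula and cites Example~3.2.2 of Fulton's \emph{Intersection Theory}. That example is precisely the Chern-root computation you have written out, so your approach coincides with the cited reference rather than differing from it.
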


We shall combine this fact with the following result, which follows from
Lemmas 2.2 and 2.3 of \cite{Benson/Pevtsova:vb}.

\begin{proposition}\label{filtrationproposition}
If $M$ is a finitely generated $kE$-module, then $\widetilde{M}$ has
a filtration whose filtered quotients are $\mathcal{F}_i(M)(j)$ for all
$0\leq j<i\leq p$.
\end{proposition}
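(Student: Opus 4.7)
The plan is to build the filtration in two stages. First, since $X_\alpha^p = 0$ for every $\alpha$, the operator $\theta_M$ satisfies $\theta_M^p = 0$, hence $\Img\theta_M^p = 0$ and one obtains a finite decreasing filtration
\[
\widetilde{M} = \Img\theta_M^0 \supseteq \Img\theta_M^1 \supseteq \cdots \supseteq \Img\theta_M^p = 0
\]
of $\widetilde{M}$ by coherent subsheaves. Write $L_\ell = \Img\theta_M^\ell/\Img\theta_M^{\ell+1}$ for its associated graded pieces ($0\leq\ell\leq p-1$). Because $\theta_M$ carries $\Img\theta_M^\ell$ into $\Img\theta_M^{\ell+1}(1)$, it descends to a morphism $\bar\theta_\ell\colon L_\ell\to L_{\ell+1}(1)$, with the convention $L_p=0$.

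Second, I would establish that each $\bar\theta_\ell$ is surjective with kernel canonically isomorphic to $\mathcal{F}_{\ell+1}(M)$. Surjectivity is immediate from $\theta_M(\Img\theta_M^\ell) = \Img\theta_M^{\ell+1}(1)$. For the kernel, a section $x$ of $\Img\theta_M^\ell$ lies over $0\in L_{\ell+1}(1)$ exactly when $\theta_M(x)\in\Img\theta_M^{\ell+2}(1) = \theta_M(\Img\theta_M^{\ell+1})$, that is, when $x \in \Ker\theta_M + \Img\theta_M^{\ell+1}$. The modular law, applied to the inclusion $\Img\theta_M^{\ell+1}\subseteq\Img\theta_M^\ell$, then yields
\[
\Ker\bar\theta_\ell \;\cong\; \frac{(\Ker\theta_M\cap\Img\theta_M^\ell)+\Img\theta_M^{\ell+1}}{\Img\theta_M^{\ell+1}} \;\cong\; \frac{\Ker\theta_M\cap\Img\theta_M^\ell}{\Ker\theta_M\cap\Img\theta_M^{\ell+1}} \;=\; \mathcal{F}_{\ell+1}(M).
\]

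With these two facts in hand, a descending induction on $\ell$ equips each $L_\ell$ with a filtration whose successive quotients are $\mathcal{F}_{\ell+1}(M), \mathcal{F}_{\ell+2}(M)(1), \ldots, \mathcal{F}_p(M)(p-1-\ell)$: the base case $\ell = p-1$ reduces to $L_{p-1}\cong\mathcal{F}_p(M)$ (since $\theta_M(\Img\theta_M^{p-1}) = 0$), and for the inductive step one pulls the filtration of $L_{\ell+1}(1)$ back along the surjection $\bar\theta_\ell$ and prepends the subsheaf $\Ker\bar\theta_\ell\cong\mathcal{F}_{\ell+1}(M)$. Splicing these layer filtrations into the $\Img\theta_M^\bullet$ filtration of $\widetilde{M}$ produces a filtration of $\widetilde{M}$ in which $\mathcal{F}_i(M)(j)$ appears exactly once, contributed by the layer $L_{i-1-j}$, as $(i,j)$ ranges over pairs with $0\leq j<i\leq p$. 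The main conceptual step is the identification of $\Ker\bar\theta_\ell$ displayed above, which rests purely on the modular law; the remaining work is bookkeeping, with care needed only to track that the twist attached to $\mathcal{F}_i(M)$ in the layer $L_\ell$ really equals $i-1-\ell$.
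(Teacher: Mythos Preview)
Your argument is correct. The paper itself does not supply a proof of this proposition; it simply records that the result ``follows from Lemmas 2.2 and 2.3 of \cite{Benson/Pevtsova:vb}.'' Your two-stage construction---first filtering $\widetilde{M}$ by the $\Img\theta_M^\ell$, then refining each layer $L_\ell$ via the surjections $\bar\theta_\ell\colon L_\ell\twoheadrightarrow L_{\ell+1}(1)$ with kernel $\mathcal{F}_{\ell+1}(M)$---is precisely the standard way to obtain such a filtration and is essentially what the cited lemmas in Benson--Pevtsova establish. The modular-law identification of $\Ker\bar\theta_\ell$ and the bookkeeping on twists are both sound.
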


\section{Application: Vector bundles for \texorpdfstring{$W$}{W}-modules}
\label{Wmodulesection}

In this section we restrict our attention to the case in which $E$ has
rank two and look at $W$-modules for $kE$. Such modules were first
introduced in \cite{Carlson/Friedlander/Suslin:2011a} and shown there to
play an important role in the theory of modules having the equal images
property. Our goal here is to use the results of the previous section to
compute the vector bundle $\mathcal{F}_i(M)$ for any $W$-module $M$.
Again, we emphasise that throughout the section we shall require the
rank $E$ to equal two, that is, $E\cong\mathbb{Z}/p\times\mathbb{Z}/p$.

\begin{definition}
Let $n$ and $d$ be positive integers such that $1\leq d\leq n$ and $d\leq p$.
If $V$ is the free $kE$-module of rank $n$ with generators $v_1,\ldots,v_n$,
we define $W_{n,d}$ to be the quotient $V/U$, where $U$ is the
$kE$-submodule of $V$ generated by the elements
\[
X_1v_1,\ \ \ X_2v_n,\ \ \ X_1^dv_i\ \ \text{for $1\leq i\leq n$,}\ \ \ 
X_2v_i-X_1v_{i+1}\ \ \text{for $1\leq i\leq n-1$.}
\]
Any $kE$-module of the form $W_{n,d}$ is called a \emph{$W$-module}.
\end{definition}

It is convenient to picture the structure of a $W$-module by way of
certain diagrams. For example, if $p$ is any prime number and $n\geq 2$,
then the module $W_{n,2}$ can be represented by the diagram
\[
\xymatrix{
v_1 \ar@{=}[dr] & & v_2 \ar@{-}[dl] \ar@{=}[dr]
& & v_3 \ar@{-}[dl]
& \cdots & v_{n-1} \ar@{=}[dr] & & v_n \ar@{-}[dl] \\
& \bullet & & \bullet & & & & \bullet &
}
\]
where each vertex represents a basis element of $W_{n,2}$. The generator
$X_1$ of $\Rad(kE)$ maps a vertex to the one lying below it, on the
opposite end of a single edge. Similarly, $X_2$ maps a vertex to
the the one lying below it, on the opposite end of a double edge.
Note that the Loewy length of a given module is indicated by the
number of rows in the corresponding diagram.
As another example, for $p\geq 3$, the module $W_{4,3}$ has diagram
\[
\xymatrix{
v_1 \ar@{=}[dr] & & v_2 \ar@{-}[dl] \ar@{=}[dr]
& & v_3 \ar@{-}[dl] \ar@{=}[dr] & & v_4 \ar@{-}[dl] \\
& \bullet \ar@{=}[dr] & & \bullet \ar@{-}[dl] \ar@{=}[dr] & &
\bullet \ar@{-}[dl] & \\
& & \bullet & & \bullet & &
}
\]
Observe that both of the above diagrams roughly have a `W' shape, hence the
terminology `$W$-module'.

The following appeared as Proposition 3.3 of
\cite{Carlson/Friedlander/Suslin:2011a}.

\begin{proposition}\label{wmoduleshaveeip}
If $1\leq d\leq n$ and $d\leq p$, then the $kE$-module $W_{n,d}$ has
the equal images property.
\end{proposition}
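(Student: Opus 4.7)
The plan is to invoke Proposition \ref{eipradicals}, reducing the equal images property for $W_{n,d}$ to the claim that $\Img(X_\alpha, W_{n,d}) = \Rad(W_{n,d})$ for every non-zero $\alpha = (\lambda_1, \lambda_2) \in \mathbb{A}^2(k)$. The inclusion $\Img(X_\alpha, W_{n,d}) \subseteq \Rad(W_{n,d})$ is automatic, so only the reverse containment requires work.

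First I would set up a basis of $W_{n,d}$ compatible with the radical filtration. Iterating the defining relation $X_2 v_i = X_1 v_{i+1}$ gives $X_1^a X_2^b v_i = X_1^{a+b} v_{i+b}$ whenever all indices are in range. Together with $X_1 v_1 = 0$, $X_2 v_n = 0$, and $X_1^d v_i = 0$, this shows that every element of $W_{n,d}$ is a linear combination of
\[
u_{j,\ell} := X_1^j v_{j+\ell}, \qquad 0 \leq j \leq d-1,\ 1 \leq \ell \leq n-j,
\]
and a dimension count (or direct verification from the presentation) shows these actually form a basis. The vectors $u_{0,\ell} = v_\ell$ are the top-row generators, while for $j \geq 1$ the $\{u_{j,\ell}\}_{\ell=1}^{n-j}$ form a basis for the $j$-th row of the $W$-diagram and descend to a basis of $\Rad^j(W_{n,d})/\Rad^{j+1}(W_{n,d})$.

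Next I would compute the action of $X_\alpha$ on this basis. A short calculation using $X_1^j v_{j+\ell} = X_2^j v_\ell$ gives
\[
X_1 u_{j,\ell} = u_{j+1,\ell-1}, \qquad X_2 u_{j,\ell} = u_{j+1,\ell},
\]
subject to the conventions $u_{j,0} = 0$, $u_{j,m} = 0$ for $m > n-j$, and $u_{d,\ell} = 0$, coming respectively from $X_1 v_1 = 0$, $X_2 v_n = 0$, and $X_1^d v_i = 0$. Consequently $X_\alpha$ sends the $j$-th row into the $(j+1)$-th row by $u_{j,\ell} \mapsto \lambda_1 u_{j+1,\ell-1} + \lambda_2 u_{j+1,\ell}$ for $j \leq d-2$, and annihilates the bottom row $R_{d-1}$.

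It then remains to check that for each $0 \leq j \leq d-2$ the induced map from the $(n-j)$-dimensional row $R_j$ to the $(n-j-1)$-dimensional row $R_{j+1}$ is surjective. In the bases fixed above this map is represented by an $(n-j-1) \times (n-j)$ bidiagonal matrix with $\lambda_2$'s on the upper diagonal and $\lambda_1$'s on the next; deleting either the first column (if $\lambda_2 \neq 0$) or the last column (if $\lambda_1 \neq 0$) produces a triangular square submatrix with non-zero diagonal, so the map has full row rank. Summing over $j$ yields $X_\alpha W_{n,d} = \bigoplus_{j=1}^{d-1} R_j = \Rad(W_{n,d})$, as required. I expect the only real obstacle to be bookkeeping: verifying the identity $X_1^a X_2^b v_i = X_1^{a+b} v_{i+b}$ and the three distinct sources of vanishing basis elements are all applied correctly, so that the basis $\{u_{j,\ell}\}$ has the right size and each bidiagonal matrix genuinely has exactly one more column than row.
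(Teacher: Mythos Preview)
The paper does not supply its own proof of this proposition; it simply cites Proposition~3.3 of \cite{Carlson/Friedlander/Suslin:2011a}. So there is no in-paper argument to compare against, and your proposal stands as an independent, self-contained verification.

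Your argument is correct. The basis $u_{j,\ell}=X_1^j v_{j+\ell}$ is well chosen, the action formulae $X_1 u_{j,\ell}=u_{j+1,\ell-1}$ and $X_2 u_{j,\ell}=u_{j+1,\ell}$ follow from the defining relations exactly as you describe, and the surjectivity of each row map $R_j\to R_{j+1}$ via the bidiagonal matrix is the right mechanism. One small slip: with the matrix whose $(m,m)$ entry is $\lambda_2$ and whose $(m,m+1)$ entry is $\lambda_1$, deleting the \emph{last} column yields an upper-triangular matrix with $\lambda_2$'s on the diagonal (so this works when $\lambda_2\neq 0$), while deleting the \emph{first} column yields a lower-triangular matrix with $\lambda_1$'s on the diagonal (so this works when $\lambda_1\neq 0$). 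You have the two parenthetical conditions interchanged. This does not affect the validity of the proof, since $\alpha\neq 0$ guarantees that one of the two deletions produces an invertible square submatrix; just swap the labels when you write it up.
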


As an immediate corollary, one obtains the following, which also appeared
in \cite{Carlson/Friedlander/Suslin:2011a}.

\begin{corollary}\label{jordantypesofwmodules}
If $1\leq d\leq n$ and $d\leq p$, then $W_{n,d}$ has constant Jordan type
\[
[d]^{n-d+1}[d-1]\ldots[1].
\]
\end{corollary}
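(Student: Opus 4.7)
By Proposition \ref{wmoduleshaveeip}, $W_{n,d}$ has the equal images property, and as noted in the remark following Proposition \ref{eipradicals}, this implies $W_{n,d}$ has constant Jordan type. Hence it suffices to compute $\JType(X_\alpha, W_{n,d})$ for a single non-zero $\alpha$; I would take $\alpha=(1,0)$ so that $X_\alpha=X_1$, since the defining relations are particularly easy to manipulate in this direction.

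Using the relation $X_2 v_j = X_1 v_{j+1}$ (for $j<n$) together with commutativity of $X_1$ and $X_2$, an induction shows $X_1^a v_i = X_2^a v_{i-a}$ for $1\le a\le i-1$. Combined with $X_1 v_1 = 0$, $X_2 v_n = 0$, and $X_1^d v_i = 0$, this forces $X_1^a v_i = 0$ whenever $a\ge i$ or $a\ge d$. Moreover, any element $X_1^a X_2^b v_i$ in the free module can be rewritten as $X_1^{a+b} v_{i+b}$ (if $i+b\le n$) or zero (otherwise). Consequently, the set $\mathcal{B} = \{X_1^a v_i : 1\le i\le n,\ 0\le a\le \min(i-1,d-1)\}$ spans $W_{n,d}$, and the action of $X_1$ on $\mathcal{B}$ is transparent: $X_1\cdot X_1^a v_i = X_1^{a+1}v_i$ when $a+1\le\min(i-1,d-1)$, and zero otherwise.

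The main obstacle is showing that $\mathcal{B}$ is linearly independent, since only then does the description of the $X_1$-action above give genuine Jordan chains rather than a spanning picture that could hide collisions. I would handle this by constructing a model $kE$-module $\widetilde{W}$ on the $k$-vector space with basis indexed by $\mathcal{B}$, defining $X_1$ and $X_2$ on these basis vectors in accordance with the rewriting rules above, and then verifying that each defining relation of $W_{n,d}$ holds in $\widetilde{W}$. This produces a surjection $W_{n,d}\twoheadrightarrow\widetilde{W}$ that must be an isomorphism by the spanning property of $\mathcal{B}$. Once $\mathcal{B}$ is a basis, $X_1$ partitions it into disjoint Jordan chains $v_i\to X_1 v_i\to\cdots\to X_1^{\min(i-1,d-1)}v_i\to 0$ of length $\min(i,d)$, one for each $i=1,\ldots,n$. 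Tallying these yields one Jordan block of length $k$ for each $1\le k\le d-1$ and $n-d+1$ blocks of length $d$, giving the desired Jordan type $[d]^{n-d+1}[d-1]\cdots[1]$.
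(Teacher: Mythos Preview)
Your proposal is correct and follows essentially the same approach as the paper: invoke Proposition~\ref{wmoduleshaveeip} to obtain constant Jordan type, then compute the Jordan type of $X_1$ on $W_{n,d}$ directly. The paper's proof simply appeals to ``the corresponding module diagram'' without further detail, whereas you have carefully spelled out a basis $\mathcal{B}$, verified its linear independence via a model construction, and read off the Jordan chains explicitly---a more thorough execution of the same idea.
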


\begin{proof}
The fact that $W_{n,d}$ has constant Jordan type follows from
Proposition \ref{wmoduleshaveeip}. Calculating its Jordan type is then
accomplished by calculating the Jordan type of $X_1$ on $W_{n,d}$, using
the corresponding module diagram.
\end{proof}

The surprising fact about $W$-modules is not that they have the equal
images property, but that they are, in some sense, nice models for all
$kE$-modules having the equal images property. This is made precise
by the next result, which appeared as Theorem 5.4 of
\cite{Carlson/Friedlander/Suslin:2011a}.

\begin{proposition}
If $M$ is a $kE$-module having the equal images property of radical length
$d$, then there exists an integer $n\geq d$ and a surjective module
homomorphism $W_{n,d}\rightarrow M$.
\end{proposition}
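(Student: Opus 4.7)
The goal is to produce elements $m_1, \ldots, m_n \in M$ that generate $M$ and satisfy the defining relations of $W_{n,d}$, namely $X_1 m_1 = 0$, $X_2 m_n = 0$, and $X_2 m_i = X_1 m_{i+1}$ for $1 \leq i \leq n-1$. The remaining relations $X_1^d m_i = 0$ are then automatic because $\Rad^d M = 0$, and the assignment $v_i \mapsto m_i$ extends to a $kE$-homomorphism $W_{n,d} \to M$ which is surjective by Nakayama provided the images $\bar m_i$ span $L_0 := M/\Rad M$. I will take $n = \dim_k L_0$; since $M$ has radical length $d$, we will see $n \geq d$.

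The structural input is the EIP propagated through the radical layers $L_j = \Rad^j M / \Rad^{j+1} M$. By Proposition~\ref{eipproperties} each $\Rad^j M$ inherits the EIP, so Proposition~\ref{eipradicals} gives $X_\alpha \cdot \Rad^j M = \Rad^{j+1} M$ for every nonzero $\alpha$. Hence every nontrivial combination $\lambda_1 \bar X_1 + \lambda_2 \bar X_2 : L_j \to L_{j+1}$ is surjective. Over an algebraically closed field this forces $\dim L_j > \dim L_{j+1}$ whenever $L_{j+1} \neq 0$: otherwise $\det(\lambda_1 \bar X_1 + \lambda_2 \bar X_2)$ would be a positive-degree homogeneous polynomial in $k[\lambda_1,\lambda_2]$ with no nontrivial zero, which is impossible. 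Iterating yields $\dim L_0 > \dim L_1 > \cdots > \dim L_{d-1} \geq 1$, so in particular $n = \dim L_0 \geq d$.

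The plan is to build the chain by repeatedly solving $X_1 m_{i+1} = X_2 m_i$, which is possible by the EIP since $X_2 m_i \in \Rad M = X_1 M$, starting from some $m_1 \in \ker X_1 \setminus \Rad M$. Such an $m_1$ exists because $\dim \ker X_1 = \dim L_0$, while $\dim(\ker X_1 \cap \Rad M) = \dim L_1 < \dim L_0$. The difficulty is that each $m_{i+1}$ is only determined modulo $\ker X_1$, so we must orchestrate these choices so that the chain simultaneously terminates with $X_2 m_n = 0$ and has top images $\bar m_1, \ldots, \bar m_n$ spanning $L_0$. This is controlled by applying Kronecker's classification of matrix pencils to $(\bar X_1, \bar X_2) : L_0 \to L_1$: surjectivity of every nontrivial combination, together with the strict dimension drop, rules out regular and right-singular blocks, so the pencil decomposes into left-singular blocks each having precisely the top-two-layer shape of a $W$-module. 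Concatenating these blocks — the end of one lying in $\ker \bar X_2$ and the start of the next in $\ker \bar X_1$ — produces a chain in $L_0$ of length $n$ with the required relations modulo $\Rad^2 M$. Lifting to $M$ and then correcting the lifts inductively down the layers, using the EIP and the dimension drop at each level to absorb the error into $\ker X_1$, completes the construction. This joint termination-and-generation argument is the main obstacle; once it is in place, verifying the relations and invoking Nakayama is routine.
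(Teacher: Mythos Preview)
The paper does not prove this proposition; it is quoted as Theorem~5.4 of \cite{Carlson/Friedlander/Suslin:2011a} without argument, so there is nothing in the present paper to compare your attempt against.

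On its own merits, the parts you actually carry out are correct. The strict dimension drop $\dim L_j > \dim L_{j+1}$ follows as you say, and the Kronecker analysis of $(\bar X_1,\bar X_2)\colon L_0\to L_1$ is right: surjectivity of every nontrivial pencil member kills the regular and right-singular blocks, so the pencil is a direct sum of $W_{m_t,2}$-shapes, and concatenating them (each join being the equation $0=0$) gives a length-$n$ chain in $L_0$ satisfying the relations modulo $\Rad^2 M$.

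The gap is exactly where you flag it. You assert that ``correcting the lifts inductively down the layers, using the EIP and the dimension drop at each level to absorb the error into $\ker X_1$'' completes the construction, but you do not carry this out, and the mechanism is not self-evident. At the $j$th stage one must find $c_1,\ldots,c_n$ with images in $L_j$ so that $X_1 c_1$, the $X_2 c_i - X_1 c_{i+1}$, and $X_2 c_n$ hit $n+1$ prescribed targets in $L_{j+1}$. A greedy left-to-right correction handles the first $n$ of these exactly but leaves no freedom for the last one, $X_2 c_n$; the slack ``in $\ker X_1$'' has already been spent. What actually makes the full system solvable is a second application of Kronecker, now to the pencil $L_j\to L_{j+1}$: on a single block of parameter $m$ the resulting map $L_j^{\,n}\to L_{j+1}^{\,n+1}$ is surjective precisely when $n\ge m$, and this holds because $m+1\le\dim L_j<\dim L_0=n$ for $j\ge 1$. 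That computation (or an equivalent one) is the missing content; as written, your proposal identifies the obstacle without overcoming it.
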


Motivated by the central role $W$-modules play in the theory of
$kE$-modules having the equal images property, we now calculate the
vector bundle $\mathcal{F}_i(W_{n,d})$ for each $1\leq i\leq d$.
Before doing so, we should point out that the kernel bundle
$\Ker\theta_{W_{n,d}}$ was calculated in Proposition 6.4 of
\cite{Carlson/Friedlander/Suslin:2011a}. Given that the vector bundles
$\mathcal{F}_i(W_{n,d})(j)$ form a filtration of the kernel bundle, our
calculation may be viewed as a refinement of this earlier work.

We recall that Grothendieck \cite{Grothendieck:1957a} has classified the
vector bundles on $\mathbb{P}^1(k)$. In particular, every such bundle is a
direct sum of line bundles
\[
\mathcal{O}_{\mathbb{P}^1}(n_1)\oplus\cdots\oplus
\mathcal{O}_{\mathbb{P}^1}(n_t),
\]
where the integers $n_1,\ldots,n_t$ are uniquely determined up to reordering.
Given this classification, we now present the main theorem of this section.

\begin{theorem}\label{wmodulevectorbundles}
If $1\leq d\leq n$ and $d\leq p$, then
$\mathcal{F}_i(W_{n,d})\cong\mathcal{O}_{\mathbb{P}^1}(-n+i)$ for
$1\leq i\leq d-1$, and
$\mathcal{F}_d(W_{n,d})\cong\mathcal{O}_{\mathbb{P}^1}^{\oplus(n-d+1)}$.
\end{theorem}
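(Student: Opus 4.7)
The plan is to combine the equal images property of $W$-modules with a Chern class computation on $\mathbb{P}^1$. The key observation is that every $\mathcal{F}_i(W_{n,d})$ can be realised as $\mathcal{F}_1$ of a smaller $W$-module, reducing the problem (outside of one trivially identified case) to pinning down the degree of a single line bundle via Grothendieck's splitting theorem.

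First I would establish the radical computation $\Rad^j(W_{n,d}) \cong W_{n-j, d-j}$ for $0 \leq j \leq d-1$. The elements $w_i := X_1 v_{i+1} = X_2 v_i$, for $1 \leq i \leq n-1$, generate $\Rad(W_{n,d})$, and a direct check using the defining relations of $W_{n,d}$ (in particular $X_1^2 v_2 = X_2 X_1 v_1 = 0$ at the leftmost end and $X_1 X_2 v_n = 0$ at the rightmost end) confirms that they satisfy precisely the defining relations of $W_{n-1, d-1}$; iterating yields the general claim. Since $W_{n,d}$ has the equal images property by Proposition \ref{wmoduleshaveeip}, Theorem \ref{eipradicalvb} then gives $\mathcal{F}_i(W_{n,d}) \cong \mathcal{F}_1(W_{n-i+1, d-i+1})$ for all $1 \leq i \leq d$.

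The case $i = d$ reduces to computing $\mathcal{F}_1(W_{n-d+1, 1})$. But $W_{n-d+1, 1}$ is the trivial $kE$-module $k^{n-d+1}$, since in this module every $X_j$ acts as zero. Consequently $\theta_{W_{n-d+1, 1}} = 0$ and $\mathcal{F}_1(W_{n-d+1, 1}) = \widetilde{W_{n-d+1, 1}} \cong \mathcal{O}_{\mathbb{P}^1}^{\oplus (n-d+1)}$, establishing the second clause of the theorem.

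For the first clause it remains to show $\mathcal{F}_1(W_{m,e}) \cong \mathcal{O}_{\mathbb{P}^1}(-m+1)$ whenever $2 \leq e \leq m$. By Corollary \ref{jordantypesofwmodules} this bundle has rank one, so Grothendieck's classification writes it as $\mathcal{O}(c)$ for some integer $c$; only $c$ needs to be identified. I would induct on $e$. Since $\widetilde{W_{m,e}}$ is a trivial vector bundle, multiplicativity of the total Chern class along the filtration of Proposition \ref{filtrationproposition} yields the single linear relation $\sum_{i=1}^{e} \sum_{j=0}^{i-1} c_1(\mathcal{F}_i(W_{m,e})(j)) = 0$ in $A^*(\mathbb{P}^1)$. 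All terms with $i \geq 2$ are controlled by the inductive hypothesis via the reduction $\mathcal{F}_i(W_{m,e}) \cong \mathcal{F}_1(W_{m-i+1, e-i+1})$; after expanding the twists using Lemma \ref{chernnumbersoftwists}, this equation solves uniquely for the remaining unknown $c_1(\mathcal{F}_1(W_{m,e}))$ and returns the value $-m+1$. The principal obstacle is purely organisational, namely the bookkeeping in the Chern class sum; the conceptual content is already exhausted by the radical identification and Theorem \ref{eipradicalvb}.
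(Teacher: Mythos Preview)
Your proposal is correct and follows essentially the same route as the paper: both use the identification $\Rad(W_{n,d})\cong W_{n-1,d-1}$ together with Theorem~\ref{eipradicalvb} to reduce the higher $\mathcal{F}_i$ to known cases, and then solve for the remaining $\mathcal{F}_1$ via the Chern class relation coming from Proposition~\ref{filtrationproposition}. The only difference is organisational---the paper inducts on $d$ for the full statement, whereas you first reduce every $\mathcal{F}_i$ to an $\mathcal{F}_1$ and then induct on the second parameter---but the content is identical.
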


\begin{remark}
The ranks of these vector bundles are given by the exponents in the
statement of Corollary \ref{jordantypesofwmodules}.
\end{remark}

\begin{proof}[Proof of Theorem \ref{wmodulevectorbundles}]
We proceed by induction on $d$, the case $d=1$ being trivial. So suppose
$d>1$. Since the trivial bundle $\widetilde{W_{n,d}}$ has a filtration with
filtered quotients $\mathcal{F}_i(M)(j)$ for $0\leq j<i\leq d$ by
Proposition \ref{filtrationproposition}, we have
\[
1=c(\widetilde{W_{n,d}})=\prod_{0\leq j< i\leq d}
c\left(\mathcal{F}_i(W_{n,d})(j)\right).
\]
Comparing the first Chern numbers using Lemma \ref{chernnumbersoftwists}
gives us
\begin{equation}\label{firstchern}
0=\sum_{i=1}^d\left(ic_1(\mathcal{F}_i(W_{n,d}))
+\textstyle{\frac{1}{2}}i(i-1)\right).
\end{equation}
Note that $\Rad(W_{n,d})$ is also a $W$-module, being isomorphic to
$W_{n-1,d-1}$. By induction and Theorem \ref{eipradicalvb} we therefore
have
\[
\mathcal{F}_i(W_{n,d})\cong\mathcal{F}_{i-1}(\Rad(W_{n,d}))
\cong\mathcal{F}_{i-1}(W_{n-1,d-1})\cong\mathcal{O}_{\mathbb{P}^1}(-n+i)
\]
for $2\leq i\leq d-1$ and
\[
\mathcal{F}_d(W_{n,d})\cong\mathcal{F}_{d-1}(\Rad(W_{n,d}))
\cong\mathcal{F}_{d-1}(W_{n-1,d-1})\cong
\mathcal{O}_{\mathbb{P}^1}^{\oplus(n-d+1)}.
\]
Substituting this into (\ref{firstchern}) and simplifying yields
\[
0=c_1(\mathcal{F}_1(W_{n,d}))+n-1
\]
so that $\mathcal{F}_1(W_{n,d})\cong\mathcal{O}_{\mathbb{P}^1}(-n+1)$.
This completes the proof.
\end{proof}

As a corollary, we obtain the following bit of folklore regarding the
functor
\[
\xymatrix{
\mathcal{F}_1\colon\cJt(kE) \ar@{>}[r] & \vect(\mathbb{P}^1(k)).
}
\]
The fact that $\mathcal{F}_1$ is essentially surjective certainly
follows from Theorem 1.1 of \cite{Benson/Pevtsova:vb}, but the above
calculation allows us to further deduce that every vector bundle on
$\mathbb{P}^1(k)$ is of the form $\mathcal{F}_1(M)$ where $M$ has Loewy
length at most two. This result was related to the first author by
Dave Benson whilst the former was a student of the latter.
We require a quick lemma relating vector bundles to $k$-linear duals of
modules, which appeared as Theorem 3.6 of \cite{Benson/Pevtsova:vb}.

\begin{lemma}\label{vectorbundlesofduals}
If $M$ is a $kE$-module in any rank $r$, then
$\mathcal{F}_i(M^\#)\cong\mathcal{F}_i(M)^\vee(-i+1)$.
\end{lemma}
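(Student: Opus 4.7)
My plan is to construct the isomorphism in three steps: identify the underlying sheaves via the duality pairing, compare the operators $\theta_{M^{\#}}$ and $\theta_M^{\vee}$, and then dualise the locally free short exact sequences defining $\mathcal{F}_i(M)$, carefully tracking Serre twists.

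First, because $M$ is finite-dimensional, the evaluation pairing $M^{\#}\otimes_k M\to k$ extends to a perfect pairing $\widetilde{M^{\#}}\otimes_{\mathcal{O}}\widetilde{M}\to\mathcal{O}_{\mathbb{P}^{r-1}}$, giving a canonical isomorphism $\widetilde{M^{\#}}\cong\widetilde{M}^{\vee}$. Under this identification I want to compare $\theta_{M^{\#}}$ with the sheaf-theoretic dual $\theta_M^{\vee}$. This is the main technical content of the proof. Since the $kE$-action on $M^{\#}$ uses the antipode $g_j\mapsto g_j^{-1}$, each $X_j$ acts on $M^{\#}$ via $f\mapsto -f\circ(g_j^{-1}X_j)$ rather than by the naive transpose $-X_j^{*}$. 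Because each $g_j^{-1}$ is a unit in $kE$, one checks that $\theta_{M^{\#}}$ agrees with $-\theta_M^{\vee}$ up to post-composition with a fibrewise-invertible endomorphism of $\widetilde{M^{\#}}$. Consequently the subsheaves $\Img\theta_{M^{\#}}^{j}$ and $\Ker\theta_{M^{\#}}^{j}$ are canonically isomorphic to $\Img(\theta_M^{\vee})^{j}$ and $\Ker(\theta_M^{\vee})^{j}$, respectively; one can verify this fibrewise using the linear-algebra identities $\Img(X_\alpha^j, M^{\#}) = \Ker(X_\alpha^j, M)^{\perp}$ and $\Ker(X_\alpha^j, M^{\#}) = \Img(X_\alpha^j, M)^{\perp}$.

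Next, since $M$ has constant Jordan type, Lemmas~\ref{firstlemma} and~\ref{kernelcapimage} ensure that every sheaf in the construction of $\mathcal{F}_i(M)$ is locally free, so $(-)^{\vee}$ preserves exactness on the relevant sequences. Dualising
\[
0 \to \Ker\theta_M\cap\Img\theta_M^{i} \to \Ker\theta_M\cap\Img\theta_M^{i-1} \to \mathcal{F}_i(M) \to 0
\]
and using Step 2 together with the dualisations of the sequences in Lemmas~\ref{firstlemma} and~\ref{kernelcapimage} identifies the two outer terms with the corresponding $\Ker\theta_{M^{\#}}\cap\Img\theta_{M^{\#}}^{\bullet}$ pieces, up to a global Serre twist; this reproduces the defining presentation of $\mathcal{F}_i(M^{\#})$ with that twist as a shift. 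The remaining bookkeeping is the twist itself: the subsheaf $\Img\theta_M^{i-1}\subseteq\widetilde{M}$ arises as the image of a map out of $\widetilde{M}(-i+1)$, and passing to the dual sends this shift to an $\mathcal{O}(i-1)$-twist on the other side; propagating this through the defining quotient yields the overall $\mathcal{O}(-i+1)$-twist on $\mathcal{F}_i(M)^{\vee}$ claimed in the lemma.

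The main obstacle is Step 2, the compatibility of $\theta_{M^{\#}}$ with $\theta_M^{\vee}$. The naive identity $\theta_{M^{\#}}=-\theta_M^{\vee}$ fails because of the full nilpotent expansion $g_j^{-1}-1=-X_j+X_j^{2}-\cdots+(-1)^{p-1}X_j^{p-1}$ in characteristic $p$, and the correction does not factor out globally as a single unit because the higher-order terms depend on the index $j$. What saves the argument is that these corrections act as fibrewise-invertible sheaf endomorphisms of $\widetilde{M^{\#}}$ (since each $g_j^{-1}$ acts invertibly on $M$), so they leave the image- and kernel-subsheaves relevant to $\mathcal{F}_i$ unchanged up to canonical isomorphism.
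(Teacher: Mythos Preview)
The paper does not give its own proof of this lemma; it is quoted verbatim as Theorem~3.6 of \cite{Benson/Pevtsova:vb}. So there is no in-paper argument to compare against, and the question is whether your sketch stands on its own.

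Your overall architecture---identify $\widetilde{M^\#}$ with $\widetilde{M}^\vee$, relate $\theta_{M^\#}$ to $\theta_M^\vee$, then dualise the locally free short exact sequences defining $\mathcal{F}_i(M)$ and bookkeep the twist---is exactly the right one, and Steps~1 and~3 are essentially fine (Step~3 is sketchy but routine once Step~2 is in hand).

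The problem is Step~2. You assert that $\theta_{M^\#}$ agrees with $-\theta_M^\vee$ ``up to post-composition with a fibrewise-invertible endomorphism of $\widetilde{M^\#}$'', but no such endomorphism exists in general. Writing things out, $\theta_{M^\#}=\sum_j(-X_jg_j^{-1})^*\otimes Y_j$, and for this to equal $\Phi\circ(-\theta_M^\vee)$ with $\Phi=\phi^*$ a constant automorphism one needs $X_j(\phi-g_j)=0$ for every $j$ simultaneously; already for $M=kE$ and $p>2$ this forces $\phi\equiv g_j\pmod{X_j^{p-1}kE}$ for all $j$, which is impossible since the $g_j$ differ by elements of $\Rad\setminus\Rad^{p-1}$. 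On fibres the situation is the same: at $\overline{\alpha}$ the operator $\theta_{M^\#}$ is the transpose of $Y_\alpha:=\sum_i\lambda_i(g_i^{-1}-1)=-\sum_i\lambda_i X_ig_i^{-1}$, and for generic $\alpha$ (e.g.\ $\alpha=(1,1)$, $p=3$) one checks directly that $Y_\alpha$ is \emph{not} a unit multiple of $X_\alpha$ in $kE$. Hence your claimed identities $\Ker(X_\alpha^j,M^\#)=\Img(X_\alpha^j,M)^\perp$ are not available by the mechanism you describe: what you actually get is $\Img(Y_\alpha^j,M)^\perp$, and $\Img(Y_\alpha^j,M)$ need not equal $\Img(X_\alpha^j,M)$.

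There are two clean repairs. The simplest is to observe that $kE$ is commutative, so one may take $M^\#$ to carry the naive transpose action $(a\cdot f)(m)=f(am)$; then $X_j$ acts on $M^\#$ literally as $X_j^*$, one has $\theta_{M^\#}=\theta_M^\vee$ on the nose, and your Steps~1 and~3 go through unchanged. Alternatively, one can prove that the two duals yield isomorphic $\mathcal{F}_i$ by showing that twisting by the antipode automorphism $g_j\mapsto g_j^{-1}$ (which induces $-\mathrm{id}$ on $\Rad/\Rad^2$, hence the identity on $\mathbb{P}^{r-1}$) leaves $\mathcal{F}_i$ unchanged; this is true but is a separate statement requiring its own argument, not a consequence of a single invertible correction as your sketch suggests.
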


We are now in a position to prove the folklore indicated above.

\begin{corollary}
Every line bundle on $\mathbb{P}^1(k)$ is isomorphic to
$\mathcal{F}_1(k)$, $\mathcal{F}_1(W_{n,2})$ or
$\mathcal{F}_1(W_{n,2}^\#)$ for some $n$.
\end{corollary}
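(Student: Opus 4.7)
The plan is to invoke Grothendieck's classification, which reduces the problem to showing that every $\mathcal{O}_{\mathbb{P}^1}(m)$ (with $m \in \mathbb{Z}$) appears in one of the three listed forms. I would split into three cases according to the sign of $m$.

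First I would handle $m=0$ by a direct computation with the trivial module. Since $X_i$ acts as zero on $k$, we have $\theta_k = 0$, so $\Ker \theta_k = \widetilde{k} = \mathcal{O}_{\mathbb{P}^1}$ and $\Img \theta_k = 0$, giving $\mathcal{F}_1(k) \cong \mathcal{O}_{\mathbb{P}^1}$. Next, for negative twists, I would apply Theorem \ref{wmodulevectorbundles} with $d = 2$ and $i = 1$, which yields $\mathcal{F}_1(W_{n,2}) \cong \mathcal{O}_{\mathbb{P}^1}(-n+1)$ for each $n \geq 2$; letting $n$ range over $\{2, 3, 4, \ldots\}$ realizes every $\mathcal{O}_{\mathbb{P}^1}(m)$ with $m \leq -1$.

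For positive twists, I would use Lemma \ref{vectorbundlesofduals} to dualize the previous calculation. Applied with $i = 1$, that lemma gives
\[
\mathcal{F}_1(W_{n,2}^\#) \cong \mathcal{F}_1(W_{n,2})^\vee(0) \cong \mathcal{O}_{\mathbb{P}^1}(n-1),
\]
so as $n$ ranges over $\{2, 3, 4, \ldots\}$ this realizes every $\mathcal{O}_{\mathbb{P}^1}(m)$ with $m \geq 1$. Combining the three cases exhausts Grothendieck's list and completes the proof.

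There is no real obstacle: the substantive work has already been done in Theorem \ref{wmodulevectorbundles} and Lemma \ref{vectorbundlesofduals}, and the only mildly delicate point is confirming that the trivial module fills the gap at $m = 0$ that the $W_{n,2}$ and their duals leave open.
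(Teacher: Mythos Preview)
Your proposal is correct and follows essentially the same approach as the paper: both invoke Theorem~\ref{wmodulevectorbundles} for the negative twists and Lemma~\ref{vectorbundlesofduals} for the positive ones. The only cosmetic difference is that the paper handles $m=0$ by observing $k\cong W_{1,1}$ and citing Theorem~\ref{wmodulevectorbundles}, whereas you compute $\mathcal{F}_1(k)$ directly from $\theta_k=0$.
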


\begin{proof}
Theorem \ref{wmodulevectorbundles} tells us that
$\mathcal{F}_1(k)\cong\mathcal{F}_1(W_{1,1})\cong
\mathcal{O}_{\mathbb{P}^1}$
and
$\mathcal{F}_1(W_{n,2})\cong\mathcal{O}_{\mathbb{P}^1}(-n+1)$ for all
$n\geq 2$.
To realise line bundles with positive Chern numbers, we
use Lemma \ref{vectorbundlesofduals} to obtain
\[
\mathcal{F}_1(W_{n,2}^\#)\cong\mathcal{F}_1(W_{n,2})^\vee\cong
\mathcal{O}_{\mathbb{P}^1}(-n+1)^\vee\cong\mathcal{O}_{\mathbb{P}^1}(n-1)\qquad
\text{for all $n\geq 2$}.\qedhere
\]
\end{proof}

\section{Recollections about the generic kernel filtration}

The category of $kE$-modules of constant Jordan type is wild, even in the
case $r=2$. It was shown by Benson that even the category of such modules
having Loewy length three is wild.
(See Section 4.5 of \cite{Benson:book} for details.) 
On the other hand, the vector bundles on $\mathbb{P}^1(k)$ are rather well
behaved, which leaves one to wonder whether or not there is some sort of
structural invariant of a $kE$-module $M$ that completely determines
$\mathcal{F}_i(M)$, preferably one that is easy to understand.

For $r=2$, it turns out that there exists a filtration of $M$ that does
allow us to compute $\mathcal{F}_i(M)$ on certain, generally much smaller
subquotients of $M$. This filtration is related to the generic kernel of
a $k(\mathbb{Z}/p)^2$-module, which was introduced by 
Carlson, Friedlander and Suslin \cite{Carlson/Friedlander/Suslin:2011a}.
The following definition applies to $kE$-modules in arbitrary rank $r$.

\begin{definition}
Let $j\in\mathbb{N}$. A $kE$-module $M$ has
\emph{constant $j$-rank} if the rank of $X_\alpha^j$ acting on $M$ is
independent of $\overline{\alpha}\in\mathbb{P}^{r-1}(k)$. If $M$ has
constant 1-rank, then we simply say that $M$ has \emph{constant rank}.
\end{definition}

\begin{remark}
It is easy to see that a $kE$-module has constant Jordan type if and only
if it has constant $j$-rank for all $1\leq j\leq p$.
\end{remark}

Throughout the remainder of this section, we again restrict our attention
to the case in which $r=2$.

\begin{definition}
Let $M$ be a $kE$-module. For any cofinite subset $S$ of $\mathbb{P}^1(k)$,
consider the submodule
\[
\leftind{S}{M}=\sum_{\overline{\alpha}\in S}\Ker(X_\alpha,M)
\]
of $M$. The \emph{generic kernel} of $M$ is then defined to be the
submodule
\[
\KM=\bigcap_{\text{$S\subseteq\mathbb{P}^1(k)$ cofinite}}\leftind{S}{M}.
\]
\end{definition}

The following are the main results in Section 7 of
\cite{Carlson/Friedlander/Suslin:2011a} concerning the generic kernel.

\begin{lemma}\label{generickernelproperties}
Let $M$ be a $kE$-module.
\begin{itemize}
\item[(1)]
The generic kernel $\KM$ has the equal images property. Moreover,
if $N$ is any submodule of $M$ having the equal images property,
then $\KM$ contains $N$.
\item[(2)]
If $M$ has constant rank and $\,\overline{\alpha}\in\mathbb{P}^1(k)$,
then $\KM$ contains the kernel of the action of $X_\alpha$ on $M$.
\end{itemize}
\end{lemma}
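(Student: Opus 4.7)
The plan is to tackle Part~(1) in two steps, establishing the maximality claim first and then deducing that $\KM$ itself has the equal images property via Proposition~\ref{eipproperties}. Part~(2) will then follow by a rank semicontinuity argument using the constant rank hypothesis.

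For the second statement of (1), I want to show that any submodule $N \subseteq M$ with the equal images property satisfies $N \subseteq \leftind{S}{M}$ for every cofinite $S \subseteq \mathbb{P}^1(k)$. Since $\Ker(X_\alpha, N) \subseteq \Ker(X_\alpha, M)$, it suffices to prove the stronger assertion $N = \sum_{\alpha \in S} \Ker(X_\alpha, N)$. I proceed by induction on the Loewy length $d$ of $N$. The case $d = 1$ is immediate since $X_\alpha$ annihilates semisimple modules. For $d > 1$, Proposition~\ref{eipproperties} ensures $\Rad N$ has the equal images property and Loewy length $d-1$, so by induction $\Rad N$ already lies in $\sum_{\alpha \in S} \Ker(X_\alpha, N)$. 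The remaining task is to show that the projection of this sum onto $N / \Rad N$ is surjective. I would identify $\pi(\Ker(X_\alpha, N))$ with the kernel of the induced linear map $\bar X_\alpha \colon N/\Rad N \to \Rad N / \Rad^2 N$ (the surjectivity of $X_\alpha \colon \Rad N \to \Rad^2 N$ needed here follows from the equal images property of $\Rad N$). Writing $\bar X_\alpha = a \bar X_1 + b \bar X_2$ for $\alpha = [a:b]$, I argue that the kernels of this pencil span $N / \Rad N$ over any cofinite subset of $\mathbb{P}^1(k)$. This linear algebra step is the main technical obstacle, and I would handle it either by a direct Vandermonde-style computation or by invoking the structure theorem realising every EIP module as a quotient of a $W$-module and reducing to the explicit computation there.

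For the first statement of (1), let $K$ denote the sum of all submodules of $M$ with the equal images property. By Proposition~\ref{eipproperties}, $K$ itself has this property, and by the maximality statement just proved, $K \subseteq \KM$. To get the reverse inclusion I aim to show that $\KM$ has the equal images property, which would force $\KM \subseteq K$. Since $M$ is finite-dimensional, the descending family $\{\leftind{S}{M}\}_{S \text{ cofinite}}$ stabilises, so pick cofinite $S_0$ with $\KM = \leftind{S_0}{M}$. For any $\alpha \in S_0$, $\Ker(X_\alpha, M)$ is one of the summands defining $\KM$, hence $\Ker(X_\alpha, M) \subseteq \KM$ and $\Ker(X_\alpha, \KM) = \Ker(X_\alpha, M)$. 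Using that $X_\alpha$ commutes with every $X_\beta$ and therefore preserves each $\Ker(X_\beta, M)$, I would compute $\Img(X_\alpha, \KM) = \sum_{\beta \in S_0} X_\alpha \Ker(X_\beta, M)$ and show this image is independent of $\alpha \in S_0$. The main obstacle is promoting dimensional equality to genuine equality of subspaces, which I expect to handle by a symmetric calculation comparing two candidates $\alpha, \alpha' \in S_0$ and exploiting that both images are determined by the same data $\{\Ker(X_\beta, M)\}_{\beta \in S_0}$ together with the commutation relations.

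For Part~(2), assume $M$ has constant rank and let $\alpha \in \mathbb{P}^1(k)$. If $\alpha \in S_0$ the containment $\Ker(X_\alpha, M) \subseteq \KM$ is already established. For the finitely many exceptional $\alpha \notin S_0$ I invoke rank semicontinuity: viewing $\theta_{\KM}$ as a polynomial matrix on $\mathbb{P}^1$, the function $\alpha \mapsto \dim \Ker(X_\alpha, \KM)$ is lower semicontinuous, hence at least its generic value. That generic value is attained on the cofinite set $S_0$, where $\dim \Ker(X_\alpha, \KM) = \dim \Ker(X_\alpha, M)$, which by the constant rank hypothesis is independent of $\alpha$. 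Combined with the evident containment $\Ker(X_\alpha, \KM) \subseteq \Ker(X_\alpha, M)$, dimensional equality at the exceptional $\alpha$ forces equality of subspaces, giving $\Ker(X_\alpha, M) = \Ker(X_\alpha, \KM) \subseteq \KM$ as required.
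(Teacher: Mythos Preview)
The paper does not prove this lemma; it is quoted without proof from Section~7 of \cite{Carlson/Friedlander/Suslin:2011a}. There is therefore no argument to compare against, and what follows is feedback on your sketch.

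Part~(2) is fine, apart from a terminology slip: kernel dimension is \emph{upper} semicontinuous (rank is lower semicontinuous), which is exactly what yields $\dim\Ker(X_\alpha,\KM)\geq$ its generic value, as you then use correctly.

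For the equal images property of $\KM$ in Part~(1), your plan is sound but the decisive step is left vague. The observation you are missing is that once $\KM=\leftind{S_0}{M}$, removing any single point still leaves a cofinite set, so $\KM=\leftind{S_0\setminus\{\alpha\}}{M}$ as well; in particular each $\Ker(X_\alpha,M)$ is redundant in the sum. Your commutation remark gives $X_\alpha\Ker(X_\beta,M)=X_{\alpha'}\Ker(X_\beta,M)=:I_\beta$ whenever $\alpha,\alpha'\neq\beta$ (write $X_\alpha=cX_{\alpha'}+dX_\beta$ with $c\neq 0$). Then $X_\alpha\KM=\sum_{\beta\in S_0\setminus\{\alpha\}}I_\beta$, and the redundancy just noted forces this to equal $\sum_{\beta\in S_0}I_\beta$, which is visibly independent of $\alpha$ and covers all $\alpha\in\mathbb{P}^1(k)$ at once. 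No dimension count is needed, and the detour through the maximal EIP submodule $K$ becomes superfluous.

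For the maximality statement, your inductive reduction to the pencil $\bar X_\alpha\colon N/\Rad N\to\Rad N/\Rad^2 N$ is correct, but the claim that its kernels span $N/\Rad N$ over any cofinite set is real content, not routine linear algebra. Over an algebraically closed field it is true---a pencil of surjections with trivial common kernel is a direct sum of Kronecker $L$-blocks, for each of which the kernels trace a rational normal curve and a Vandermonde argument applies---but both of your suggested shortcuts need care: the $W$-module structure theorem produces $N$ only as a \emph{quotient} of some $W_{n,d}$, so the reduction to $W$-modules is not immediate.
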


Now consider the filtration
\[
0=J^p\KM\subseteq\cdots\subseteq J\KM\subseteq\KM
\subseteq J^{-1}\KM\subseteq \cdots\subseteq J^{-p+1}\KM
=M
\]
of $M$. (For $j\in\mathbb{N}$, $J^{-j}\KM$ denotes the set of elements
$m\in M$ for which $J^jm\subseteq\KM$.) We call the above filtration the
\emph{generic kernel filtration} of $M$. Our goal is to show that the
functors $\mathcal{F}_i$ behave well with respect to the generic kernel
filtration in the sense that $\mathcal{F}_i(M)$ may be computed on the
subquotient $J^{-i}\KM/J^{i+1}\KM$.
The following lemma appeared as Proposition 2.7 of \cite{Baland:2012a}.

\begin{lemma}\label{inversesofgenerickernels}
If $\,M$ is a $kE$-module of constant rank and
$\,\overline{\alpha}\in\mathbb{P}^1(k)$, then for all $j\geq 0$ we have
$X_\alpha^{-j}\KM=J^{-j}\KM$.
\end{lemma}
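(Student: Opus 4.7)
The plan is to induct on $j$, with the base case $j=0$ being trivially $\KM=\KM$, and with the containment $J^{-j}\KM\subseteq X_\alpha^{-j}\KM$ also immediate from $X_\alpha^j\in J^j$; the substance of the claim is the reverse inclusion.

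For the inductive step, I would first reduce the statement for general $j$ to the case $j=1$. If $X_\alpha^j m\in\KM$, then $X_\alpha^{j-1}(X_\alpha m)\in\KM$, so by the inductive hypothesis $X_\alpha m\in X_\alpha^{-(j-1)}\KM=J^{-(j-1)}\KM$, which unpacks as $J^{j-1}X_\alpha m\subseteq\KM$, i.e., by commutativity $X_\alpha(J^{j-1}m)\subseteq\KM$. This shows $J^{j-1}m\subseteq X_\alpha^{-1}\KM$, and once the $j=1$ equality $X_\alpha^{-1}\KM=J^{-1}\KM$ is in hand, one gets $J^{j-1}m\subseteq J^{-1}\KM$ and hence $J^jm\subseteq\KM$, completing the induction.

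The heart of the matter is thus the $j=1$ case, which is equivalent to $M/\KM$ having the equal kernels property: $\Ker(X_\alpha,M/\KM)$ should not depend on $\overline{\alpha}\in\mathbb{P}^1(k)$. The natural tool is the snake lemma applied to the endomorphism $X_\alpha$ on $0\to\KM\to M\to M/\KM\to 0$. Since $M$ has constant rank, Lemma \ref{generickernelproperties}(2) gives $\Ker(X_\alpha,M)\subseteq\KM$, so $\Ker(X_\alpha,\KM)\to\Ker(X_\alpha,M)$ is an isomorphism; and since $\KM$ has the equal images property, $X_\alpha\KM=\Rad(\KM)$ is independent of $\alpha$. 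The snake sequence then collapses to an exact sequence
\[
0\to\Ker(X_\alpha,M/\KM)\to\KM/\Rad(\KM)\to M/X_\alpha M\to (M/\KM)/X_\alpha(M/\KM)\to 0,
\]
identifying $\Ker(X_\alpha,M/\KM)$ with $(\KM\cap X_\alpha M)/\Rad(\KM)$.

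The remaining and main obstacle is to show that the subspace $\KM\cap X_\alpha M$ of $\KM$ does not depend on $\overline{\alpha}$. I would approach this sheaf-theoretically: the locally free subsheaf $\Img\theta_M\subseteq\widetilde{M}$ (locally free by constant rank and Lemma \ref{firstlemma}) and the trivial subbundle $\widetilde{\KM}\subseteq\widetilde{M}$ meet in a subsheaf whose fiber at $\overline{\alpha}$ is $\KM\cap X_\alpha M$, and the goal is to show this intersection sheaf is locally free of constant rank. A plausible route is via duality: $(M/\KM)^\#\cong\KM^\perp\subseteq M^\#$, and the standard interchange between the equal images and equal kernels properties under $k$-linear duality would allow one to deduce the equal kernels property for $M/\KM$ from an equal images property for $\KM^\perp$. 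Establishing the latter---perhaps by identifying $\KM^\perp$ with the generic kernel of $M^\#$ (which also has constant rank) or with a dual ``cogeneric'' construction drawn from the structural theory in \cite{Carlson/Friedlander/Suslin:2011a}---is where I expect the most delicate work to lie.
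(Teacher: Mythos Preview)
The paper does not actually prove this lemma; it simply cites Proposition~2.7 of \cite{Baland:2012a}. So there is no in-paper argument to compare against, and your proposal must be judged on its own.

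Your inductive reduction to the case $j=1$ is correct and clean. The genuine gap is at the $j=1$ step. You correctly recast it as the assertion that $M/\KM$ has the equal kernels property, and your snake-lemma computation identifying $\Ker(X_\alpha,M/\KM)$ with $(\KM\cap X_\alpha M)/\Rad(\KM)$ is fine. But you never establish that $X_\alpha^{-1}\KM$ is independent of $\overline{\alpha}$: the sheaf-theoretic and duality routes you sketch are left open, and even granting that $\KM\cap X_\alpha M$ is constant, the connecting homomorphism $\delta$ itself depends on $\alpha$, so constancy of its image does not immediately yield constancy of its domain $X_\alpha^{-1}\KM/\KM$ inside $M/\KM$.

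The missing idea is entirely elementary and uses only the two facts already supplied by Lemma~\ref{generickernelproperties}: that $\KM$ has the equal images property and that $\Ker(X_\alpha,M)\subseteq\KM$ under the constant-rank hypothesis. Suppose $X_\alpha m\in\KM$ and let $\overline{\beta}\in\mathbb{P}^1(k)$ be arbitrary. Then
\[
X_\alpha(X_\beta m)=X_\beta(X_\alpha m)\in X_\beta\KM=X_\alpha\KM
\]
by the equal images property of $\KM$, so $X_\alpha(X_\beta m-k)=0$ for some $k\in\KM$. Hence $X_\beta m-k\in\Ker(X_\alpha,M)\subseteq\KM$, giving $X_\beta m\in\KM$. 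This shows $X_\alpha^{-1}\KM\subseteq J^{-1}\KM$ directly, with no detour through duality or sheaves, and your induction then handles all $j\geq 1$.
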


An easy consequence of this is the following, which appeared as
Lemma 5.24 of \cite{Baland:thesis}.

\begin{lemma}\label{kernelcapimagelemma}
If $M$ is a $kE$-module of constant rank, then for all
$\,\overline{\alpha}\in\mathbb{P}^1(k)$ and all $i\leq j$ we have
\[
\Ker(X_\alpha,J^{-j}\KM)\cap\Img(X_\alpha^i,J^{-j}\KM)=
\Ker(X_\alpha,M)\cap\Img(X_\alpha^i,M).
\]
\end{lemma}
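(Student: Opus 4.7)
The containment $\subseteq$ is immediate since $J^{-j}\KM \subseteq M$, so the task reduces to verifying the reverse containment. I would take an element $m \in \Ker(X_\alpha, M) \cap \Img(X_\alpha^i, M)$ and produce $m' \in J^{-j}\KM$ with $X_\alpha^i m' = m$, while also showing that $m$ itself lies in $J^{-j}\KM$ (so that $X_\alpha m = 0$ holds within the submodule).

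For the first task, since $M$ has constant rank, part (2) of Lemma \ref{generickernelproperties} gives $\Ker(X_\alpha, M) \subseteq \KM$, and clearly $\KM \subseteq J^{-j}\KM$ for $j \geq 0$. Hence $m \in J^{-j}\KM$ and $m \in \Ker(X_\alpha, J^{-j}\KM)$.

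For the second task, pick any $m' \in M$ with $X_\alpha^i m' = m$; the goal is to upgrade $m'$ to an element of $J^{-j}\KM$. By Lemma \ref{inversesofgenerickernels}, this is equivalent to verifying $X_\alpha^j m' \in \KM$. I would split into two cases according to whether $j > i$ or $j = i$. If $j > i$, then $X_\alpha^j m' = X_\alpha^{j-i}(X_\alpha^i m') = X_\alpha^{j-i} m = 0$ since $X_\alpha m = 0$ and $j - i \geq 1$, so certainly $X_\alpha^j m' \in \KM$. If $j = i$, then $X_\alpha^j m' = X_\alpha^i m' = m$, and by the preceding paragraph $m \in \KM$, so again $X_\alpha^j m' \in \KM$.

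The argument is essentially a direct application of the two referenced lemmas, and there is no serious obstacle; the only subtlety worth flagging is the case distinction $j = i$ versus $j > i$, which is exactly where the containment $\Ker(X_\alpha, M) \subseteq \KM$ (requiring the constant rank hypothesis) becomes essential to close the loop. Once $X_\alpha^j m' \in \KM$ is established, Lemma \ref{inversesofgenerickernels} yields $m' \in J^{-j}\KM$, whence $m = X_\alpha^i m' \in \Img(X_\alpha^i, J^{-j}\KM)$, completing the reverse inclusion.
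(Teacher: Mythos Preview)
Your proof is correct and follows essentially the same approach as the paper's: both establish the reverse inclusion by using Lemma~\ref{generickernelproperties}(2) to get $m\in\KM$ and then invoking Lemma~\ref{inversesofgenerickernels} to locate the preimage $m'$ inside $J^{-j}\KM$. The only cosmetic difference is that the paper applies Lemma~\ref{inversesofgenerickernels} with exponent $i$ (so $m'\in X_\alpha^{-i}\KM=J^{-i}\KM\subseteq J^{-j}\KM$ directly), whereas you apply it with exponent $j$, which forces your case split $j=i$ versus $j>i$; the paper's route avoids that split entirely.
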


\begin{proof}
The rightwards containment is clear, so let
$m\in\Ker(X_\alpha,M)\cap\Img(X_\alpha^i,M)$. Observe that
$\Ker(X_\alpha,M)\subseteq\KM$ by Lemma \ref{generickernelproperties} (2).
It follows that
\[
\Ker(X_\alpha,J^{-j}\KM)\subseteq\Ker(X_\alpha,M)\subseteq
\Ker(X_\alpha,\KM)\subseteq\Ker(X_\alpha,J^{-j}\KM),
\]
whence equality holds throughout. In particular, we have
$m\in\Ker(X_\alpha,J^{-j}\KM)$. Also, there exists $m'\in M$ such that
$X_\alpha^im'=m$. Since $m\in\KM$, Lemma
\ref{inversesofgenerickernels} implies that
$m'\in X_\alpha^{-i}\KM=J^{-i}\KM\subseteq J^{-j}\KM$, thus
$m\in\Img(X_\alpha^i,J^{-j}\KM)$.
\end{proof}

Although one may prove the results of the following section directly,
the presentation is made considerably more elegant via the following
duality statement related to the generic kernel filtration. It appeared as
Theorem 3.3 of \cite{Baland:2012a}.

\begin{lemma}\label{generickernelduality}
If $M$ is a $kE$-module of constant rank and $a,b\in\mathbb{Z}$ satisfy
$a\leq b$, then
\[
J^a\mathfrak{K}(M^\#)/J^b\mathfrak{K}(M^\#)\cong
(J^{-b+1}\KM/J^{-a+1}\KM)^\#.
\]
\end{lemma}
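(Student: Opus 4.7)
The plan is to construct the desired isomorphism from the canonical evaluation pairing $M^\# \times M \to k$. For any submodules $N_1 \subseteq N_2 \subseteq M$, this pairing induces a natural isomorphism $(N_2/N_1)^\# \cong N_1^\perp/N_2^\perp$ in $M^\#$, where $\perp$ denotes the annihilator. Applied with $N_1 = J^{-a+1}\KM$ and $N_2 = J^{-b+1}\KM$, the lemma reduces to the single annihilator identity
\[
J^c\mathfrak{K}(M^\#) = (J^{-c+1}\KM)^\perp \qquad \text{in } M^\#,
\]
for every $c \in \mathbb{Z}$.

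The first step is to reformulate both sides using a single cyclic shifted subgroup. For any generic $\overline{\alpha}\in\mathbb{P}^1(k)$ and any $j\in\mathbb{Z}$, $J^jL = X_\alpha^jL$ for $L\in\{\KM,\mathfrak{K}(M^\#)\}$. When $j\geq 0$ this follows iteratively from the equal images property of $L$ (Lemma \ref{generickernelproperties}(1)), Proposition \ref{eipradicals}, and the fact (Proposition \ref{eipproperties}) that Jacobson powers preserve the equal images property; when $j\leq 0$ it is Lemma \ref{inversesofgenerickernels}. The annihilator identity thus reduces to showing $X_\alpha^c\mathfrak{K}(M^\#) = (X_\alpha^{-c+1}\KM)^\perp$ as subspaces of $M^\#$.

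Next I would handle the base case $c=1$: $J\mathfrak{K}(M^\#) = (\KM)^\perp$. Using the $kE$-invariance of the evaluation pairing, one has $\Img(X_\alpha^j,M)^\perp = \Ker(X_\alpha^j,M^\#)$ and $\Ker(X_\alpha^j,M)^\perp = \Img(X_\alpha^j,M^\#)$ inside $M^\#$. Swapping sums, intersections, and annihilators in the definition of $\KM$ then gives $(\KM)^\perp = \bigcup_S\bigcap_{\beta\in S}\Img(X_\beta,M^\#)$, the generic image submodule $\mathfrak{I}(M^\#)$ of $M^\#$. The containment $J\mathfrak{K}(M^\#)=X_\alpha\mathfrak{K}(M^\#)\subseteq\mathfrak{I}(M^\#)$ is immediate from the equal images property of $\mathfrak{K}(M^\#)$; the reverse containment is an instance of the generic-image/generic-kernel correspondence in the rank-$2$ theory of \cite{Carlson/Friedlander/Suslin:2011a}, which, combined with the constant-rank hypothesis forcing the two sides to have equal dimensions, promotes the containment to equality. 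The general identity for arbitrary $c$ then follows by iteratively applying $X_\alpha$ (or its $k$-linear transpose) to both sides of the base case and invoking the same kernel/image duality; a short verification shows that $X_\alpha((\KM)^\perp) = (X_\alpha^{-1}\KM)^\perp$, using Lemma \ref{generickernelproperties}(2) to ensure kernels of $X_\alpha$ already sit inside $\KM$.

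The main obstacle is the reverse containment in the base case $c=1$, equivalently the identification of the generic image of $M^\#$ with $J\mathfrak{K}(M^\#)$ after duality. This reflects the deeper structure of the generic kernel filtration developed in \cite{Carlson/Friedlander/Suslin:2011a} and \cite{Baland:2012a}, and is where the rank-$2$ hypothesis is essential. The passage from the base case to arbitrary $c$ is routine bookkeeping, with the constant-rank hypothesis ensuring throughout that natural one-sided containments arising from the kernel/image duality are in fact equalities.
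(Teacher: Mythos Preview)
The paper does not prove this lemma; it simply records it with a citation to Theorem~3.3 of \cite{Baland:2012a}. There is therefore no in-paper argument to compare your proposal against.

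That said, your outline is the natural route and is almost certainly what the cited proof does: reduce via the evaluation pairing to the single annihilator identity $J^c\mathfrak{K}(M^\#)=(J^{-c+1}\KM)^\perp$, rewrite both sides using a generic $X_\alpha$ via Lemma~\ref{inversesofgenerickernels} and the equal images property, and then bootstrap from the base case $c=1$ using kernel/image duality under the pairing. Your identification of the ``main obstacle''---the equality $J\mathfrak{K}(M^\#)=\mathfrak{I}(M^\#)=(\KM)^\perp$---is accurate; this is exactly the content of the generic-kernel/generic-image correspondence developed in \cite{Carlson/Friedlander/Suslin:2011a} and \cite{Baland:2012a}, and your sketch here is really a pointer to that result rather than an independent argument. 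A self-contained proof would need to make the dimension count explicit (e.g.\ $\dim\KM+\dim\mathfrak{I}(M^\#)=\dim M$ under constant rank) or invoke the relevant statement from those references by name. The inductive passage to general $c$ is, as you say, bookkeeping once the base case is in hand.
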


\section{Computing \texorpdfstring{$\mathcal{F}_i(M)$}{FiM} in rank two
via the generic kernel filtration}

The main theorem of this section shows that $\mathcal{F}_i(M)$ can be
computed on a subquotient of $M$ whose Loewy length is bounded in
terms of $i$. Again, we continue to require that the rank $r$ of $E$
is equal to two so that $E\cong\mathbb{Z}/p\times\mathbb{Z}/p$.

The following result will allow us to employ Lemma \ref{kernelcapimagelemma}
as the key in proving our main theorem.

\begin{lemma}\label{jranklemma}
Let $1\leq j\leq p$ and $i\leq j+1$. If $M$ is a $kE$-module having both
constant rank and constant $i$-rank, then the submodule $J^{-j}\KM$ also
has constant $i$-rank.
\end{lemma}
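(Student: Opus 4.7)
The plan is to reduce $\rank(X_\alpha^i, J^{-j}\KM)$ to $\rank(X_\alpha^i, M)$ (plus a dimension constant) via a telescoping computation, so that constancy in $\overline{\alpha}$ transfers directly from $M$ to $N := J^{-j}\KM$. The main tool will be Lemma \ref{kernelcapimagelemma}, whose applicability is the reason the hypothesis takes the form $i\leq j+1$.

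Concretely, I would first fix $\overline{\alpha}\in\mathbb{P}^1(k)$ and, for each $1\leq k\leq i$, use the tautological short exact sequence
\[
\xymatrix{
0 \ar@{>}[r] & \Ker(X_\alpha,N)\cap\Img(X_\alpha^{k-1},N) \ar@{>}[r] &
\Img(X_\alpha^{k-1},N) \ar@{>}[r]^{\ \ \ X_\alpha} &
\Img(X_\alpha^k,N) \ar@{>}[r] & 0
}
\]
to record the dimension identity $\dim\Img(X_\alpha^{k-1},N)-\dim\Img(X_\alpha^k,N)=\dim(\Ker(X_\alpha,N)\cap\Img(X_\alpha^{k-1},N))$. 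Summing these from $k=1$ to $k=i$ telescopes on the left, so that
\[
\rank(X_\alpha^i,N)=\dim N-\sum_{k=1}^{i}\dim\bigl(\Ker(X_\alpha,N)\cap\Img(X_\alpha^{k-1},N)\bigr).
\]

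Next I would invoke Lemma \ref{kernelcapimagelemma}: since $k-1\leq i-1\leq j$ for every $k$ in the range, each summand equals $\dim(\Ker(X_\alpha,M)\cap\Img(X_\alpha^{k-1},M))$. Running the exact same telescoping argument for $M$ in place of $N$ (which is purely linear algebra and requires no rank hypothesis on the intermediate powers) collapses the sum to $\dim M-\rank(X_\alpha^i,M)$. Substituting back yields the clean formula
\[
\rank(X_\alpha^i,N)=\dim N-\dim M+\rank(X_\alpha^i,M),
\]
whose right-hand side is independent of $\overline{\alpha}$ precisely because $M$ has constant $i$-rank.

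The constant rank hypothesis on $M$ is used implicitly, as it is a standing assumption for Lemma \ref{kernelcapimagelemma} (and ensures that $\Ker(X_\alpha,M)\subseteq\KM$). I do not foresee a real obstacle; the one subtlety is checking the index bound carefully, since the hypothesis $i\leq j+1$ is exactly what makes each of the indices $k-1$ appearing in the telescoped sum fall within the range $\{0,1,\ldots,j\}$ required by Lemma \ref{kernelcapimagelemma}. Note in particular that we never need constant $k$-rank on $M$ for intermediate $1<k<i$, because those intermediate terms are exactly the ones that cancel.
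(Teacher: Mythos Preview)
Your argument is correct, but it takes a longer road than the paper's. The paper observes directly that $\Ker(X_\alpha^i,N)=\Ker(X_\alpha^i,M)$: if $m\in\Ker(X_\alpha^i,M)$ then $X_\alpha^{i-1}m\in\Ker(X_\alpha,M)\subseteq\KM$ (using constant rank via Lemma~\ref{generickernelproperties}(2)), so $m\in X_\alpha^{-(i-1)}\KM=J^{-(i-1)}\KM\subseteq J^{-j}\KM=N$ by Lemma~\ref{inversesofgenerickernels} and the bound $i-1\leq j$. Constancy of $\dim\Ker(X_\alpha^i,N)$ then follows immediately from constant $i$-rank of $M$.

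Your telescoping formula $\rank(X_\alpha^i,N)=\dim N-\dim M+\rank(X_\alpha^i,M)$ is, by rank--nullity, precisely the statement that $\dim\Ker(X_\alpha^i,N)=\dim\Ker(X_\alpha^i,M)$, hence equivalent to the paper's kernel equality. So the two proofs establish the same underlying fact; you reach it through Lemma~\ref{kernelcapimagelemma} and a sum, whereas the paper bypasses that lemma entirely and goes straight to the primitive ingredients (Lemmas~\ref{generickernelproperties} and~\ref{inversesofgenerickernels}) that Lemma~\ref{kernelcapimagelemma} is itself built on. Your route has the minor virtue of exhibiting an explicit rank formula, but the paper's argument is shorter and makes the role of the hypothesis $i\leq j+1$ more transparent.
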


\begin{proof}
The statement is clear if $i=0$, so assume that $i\geq 1$.
Let $\overline{\alpha}\in\mathbb{P}^1(k)$ and observe that
if $m\in\Ker(X_\alpha^i,M)$, then $X_\alpha^{i-1}m\in\Ker(X_\alpha,M)$.
One then has $X_\alpha^{i-1}m\in\KM$ by Lemma
\ref{generickernelproperties} (2) so that $m\in J^{-(i-1)}\KM$ by Lemma
\ref{inversesofgenerickernels}. Combining this with the fact that
$J^{-(i-1)}\KM\subseteq J^{-j}\KM$ shows that
\[
\Ker(X_\alpha^i,J^{-j}\KM)=\Ker(X_\alpha^i,M).
\]
Because $M$ has constant $i$-rank, the dimension of the right hand term
is independent of $\overline{\alpha}$, hence so is the dimension of the
left hand term.
\end{proof}

\begin{proposition}\label{Fionsubmodule}
If $M$ is a $kE$-module of constant Jordan type and $i\leq j$, then
\[
\mathcal{F}_i(M)\cong\mathcal{F}_i(J^{-j}\KM).
\]
\end{proposition}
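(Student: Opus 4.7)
The plan is to combine Lemma \ref{jranklemma}, Proposition \ref{fibresofinclusion}, and Lemma \ref{kernelcapimagelemma} to show that the inclusion $N := J^{-j}\KM\hookrightarrow M$ induces isomorphisms between the two subsheaves of $\widetilde{M}$ appearing in the definition of $\mathcal{F}_i$. Since $M$ has constant Jordan type, it has constant $k$-rank for every $k$; in particular it has constant rank, constant $(i-1)$-rank, constant $i$-rank, and constant $(i+1)$-rank. Because $i\le j$, each of $i-1$, $i$, $i+1$ is at most $j+1$, so Lemma \ref{jranklemma} implies that $N$ also has constant $(i-1)$-, $i$-, and $(i+1)$-rank.

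With these constant-rank hypotheses in hand, the next step is to apply Proposition \ref{fibresofinclusion} at the two consecutive pairs of ranks $(i-1,i)$ and $(i,i+1)$. For both $k=i-1$ and $k=i$, the natural inclusion
\[
\Ker\theta_N\cap\Img\theta_N^k\ \subseteq\ \Ker\theta_M\cap\Img\theta_M^k
\]
is then a map of locally free sheaves whose fibre at any closed point $\overline{\alpha}\in\mathbb{P}^1(k)$ is identified with the inclusion $\Ker(X_\alpha,N)\cap\Img(X_\alpha^k,N)\subseteq\Ker(X_\alpha,M)\cap\Img(X_\alpha^k,M)$. Since $k\le j$ in each case, Lemma \ref{kernelcapimagelemma} tells us that each of these fibre inclusions is in fact an equality. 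An injection of locally free sheaves that is an isomorphism on every fibre is itself an isomorphism (surjectivity on stalks follows from Nakayama's lemma), so both of the displayed sheaf inclusions are isomorphisms.

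To finish, the inclusion $N\hookrightarrow M$ produces a map between the defining short exact sequences of $\mathcal{F}_i(N)$ and $\mathcal{F}_i(M)$ whose first two vertical arrows are the isomorphisms just established; the five lemma then delivers the induced isomorphism $\mathcal{F}_i(N)\cong\mathcal{F}_i(M)$ on cokernels. I do not anticipate any genuine obstacle: the argument is a careful application of the three cited results, and the only bookkeeping is to note that the single hypothesis $i\le j$ suffices both to invoke Lemma \ref{jranklemma} at the rank indices $i-1$, $i$, $i+1$, and to invoke Lemma \ref{kernelcapimagelemma} at the indices $i-1$ and $i$.
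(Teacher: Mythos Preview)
Your proposal is correct and follows essentially the same route as the paper's proof: both use Lemma~\ref{jranklemma} to obtain the needed constant-rank properties of $N=J^{-j}\KM$, then combine Proposition~\ref{fibresofinclusion} with Lemma~\ref{kernelcapimagelemma} to conclude that the inclusions $\Ker\theta_N\cap\Img\theta_N^k\subseteq\Ker\theta_M\cap\Img\theta_M^k$ are equalities for $k=i-1,i$. The only cosmetic difference is that the paper phrases the conclusion as an equality of subsheaves of $\widetilde{M}$ (so the quotients defining $\mathcal{F}_i$ literally coincide), whereas you invoke Nakayama and the five lemma; your bookkeeping on needing the $(i+1)$-rank of $N$ is in fact slightly more explicit than the paper's.
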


\begin{proof}
Let $N=J^{-j}\KM$. The inclusion $N\subseteq M$ induces an
inclusion of coherent sheaves $\widetilde{N}\subseteq\widetilde{M}$,
which in turn yields a natural inclusion
\begin{equation}\label{kernelcapimageinclusion}
\Ker\theta_N\cap\Img\theta_N^{i-1}\subseteq
\Ker\theta_M\cap\Img\theta_M^{i-1}.
\end{equation}
By Proposition \ref{jranklemma}, since $M$ has constant
Jordan type, $N$ has constant $(i-1)$-rank and constant $i$-rank.
Using Lemma \ref{fibresofinclusion}, this implies that the fibre of the
inclusion (\ref{kernelcapimageinclusion}) at any point
$\overline{\alpha}\in\mathbb{P}^1(k)$ is just the inclusion of vector
spaces
\[
\Ker(X_\alpha,N)\cap\Img(X_\alpha^{i-1},N)\subseteq
\Ker(X_\alpha,M)\cap\Img(X_\alpha^{i-1},M).
\]
But the latter inclusion is an equality for all $\overline{\alpha}$ by
Lemma \ref{kernelcapimagelemma}, so we actually have
\[
\Ker\theta_N\cap\Img\theta_N^{i-1}=\Ker\theta_M\cap\Img\theta_M^{i-1}.
\]
An identical argument also shows that
$\Ker\theta_N\cap\Img\theta_N^i=\Ker\theta_M\cap\Img\theta_M^i$,
hence
\[
\mathcal{F}_i(M)=\frac{\Ker\theta_M\cap\Img\theta_M^{i-1}}
{\Ker\theta_M\cap\Img\theta_M^i}=\frac{\Ker\theta_N\cap\Img\theta_N^{i-1}}
{\Ker\theta_N\cap\Img\theta_N^i}=
\mathcal{F}_i(N).\qedhere
\]
\end{proof}

\begin{theorem}\label{genkernelandFi}
If $M$ is a $kE$-module of constant Jordan type and
$i\leq\min\{j,\ell-1\}$, then
\[
\mathcal{F}_i(M)\cong\mathcal{F}_i(J^{-j}\KM/J^\ell\KM).
\]
\end{theorem}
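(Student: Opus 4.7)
My plan is to combine Proposition \ref{Fionsubmodule} with a companion `quotient' statement. Proposition \ref{Fionsubmodule} already provides $\mathcal{F}_i(M)\cong\mathcal{F}_i(J^{-j}\KM)$ for $i\leq j$, so, writing $N=J^{-j}\KM$, it suffices to prove that the quotient map $N\twoheadrightarrow N/J^\ell\KM$ induces an isomorphism $\mathcal{F}_i(N)\cong\mathcal{F}_i(N/J^\ell\KM)$ whenever $i\leq\ell-1$.

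To establish this, I would analyse the induced sheaf map $\mathcal{F}_i(N)\to\mathcal{F}_i(N/J^\ell\KM)$ fibrewise over $\overline{\alpha}\in\mathbb{P}^1(k)$. The crucial input is the equal images property of $\KM$, which together with Propositions \ref{eipradicals} and \ref{eipproperties} gives the identity $J^m\KM=X_\alpha^m\KM$ for every non-zero $\alpha$ and every $m\geq 0$. Setting $V_1=\Ker(X_\alpha,N)\cap\Img(X_\alpha^{i-1},N)$ and $V_2=\Ker(X_\alpha,N)\cap\Img(X_\alpha^i,N)$, these identities allow one to express the fibre of $\mathcal{F}_i(N/J^\ell\KM)$ at $\overline{\alpha}$ as $(V_1+X_\alpha^{\ell-1}\KM)/(V_2+X_\alpha^{\ell-1}\KM)$ and to check that the natural map $V_1/V_2\to(V_1+X_\alpha^{\ell-1}\KM)/(V_2+X_\alpha^{\ell-1}\KM)$ is a surjection. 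Injectivity reduces to the inclusion $V_1\cap X_\alpha^{\ell-1}\KM\subseteq V_2$: any $x=X_\alpha^{\ell-1}k$ in $V_1$ satisfies $X_\alpha^\ell k=0$, and the factorisation $x=X_\alpha^i\cdot X_\alpha^{\ell-1-i}k$ with $X_\alpha^{\ell-1-i}k\in\KM\subseteq N$ exhibits $x$ as an element of $V_2$, using the hypothesis $\ell-1\geq i$.

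The main technical obstacle is confirming that $\mathcal{F}_i(N/J^\ell\KM)$ is a vector bundle of the correct rank, so that the fibrewise isomorphism is indeed a sheaf isomorphism. This reduces to showing $N/J^\ell\KM$ has constant $i'$-rank for $i'\in\{i-1,i,i+1\}$. Using $J^\ell\KM=X_\alpha^\ell\KM$ one obtains $J^\ell\KM\subseteq\Img(X_\alpha^{i'},N)$ whenever $i'\leq\ell$, so $\dim\Img(X_\alpha^{i'},N/J^\ell\KM)=\dim\Img(X_\alpha^{i'},N)-\dim J^\ell\KM$. The first term is independent of $\overline{\alpha}$ by Lemma \ref{jranklemma} applied inside $M$ (which gives $N$ constant $i'$-rank for $i'\leq j+1$), and the second is independent of $\overline{\alpha}$ because $\KM$ has the equal images property. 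The resulting constraints $i+1\leq j+1$ and $i+1\leq\ell$ are exactly the hypothesis $i\leq\min\{j,\ell-1\}$ of the theorem.
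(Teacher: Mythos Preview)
Your argument is correct and takes a genuinely different route from the paper's. After the common first step $\mathcal{F}_i(M)\cong\mathcal{F}_i(N)$ via Proposition~\ref{Fionsubmodule}, the paper handles the quotient by $J^{\ell}\KM$ \emph{indirectly}: it dualises using Lemma~\ref{vectorbundlesofduals}, invokes the duality Lemma~\ref{generickernelduality} to rewrite $(J^{-j}\KM)^{\#}$ as $M^{\#}/J^{j+1}\mathfrak{K}(M^{\#})$, applies Proposition~\ref{Fionsubmodule} a second time (now to $M^{\#}$), and then dualises back. You instead prove the quotient step \emph{directly}, exploiting the equal images property of $\KM$ to identify $J^{m}\KM=X_{\alpha}^{m}\KM$ and then carrying out an explicit fibrewise computation. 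Your approach is more elementary in that it bypasses the duality machinery of Lemmas~\ref{vectorbundlesofduals} and~\ref{generickernelduality} entirely; the paper's approach is slicker once those lemmas are available, and has the conceptual advantage of making the symmetry between the submodule and quotient steps manifest.

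One small point worth making explicit in your write-up: to conclude from the fibrewise isomorphism that the sheaf map $\mathcal{F}_i(N)\to\mathcal{F}_i(N/J^{\ell}\KM)$ is an isomorphism, you need both sides to be locally free and you need to know that the fibre of this sheaf map really is the linear map $V_1/V_2\to(V_1+X_{\alpha}^{\ell-1}\KM)/(V_2+X_{\alpha}^{\ell-1}\KM)$ you describe. The first point follows from your constant-rank verification together with Lemma~\ref{kernelcapimage} (and $\mathcal{F}_i(N)=\mathcal{F}_i(M)$ is already locally free); the second requires a quotient-map analogue of Proposition~\ref{fibresofinclusion}, which is routine but not stated in the paper. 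Neither is a genuine obstacle, but both deserve a sentence.
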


\begin{proof}
We have $\mathcal{F}_i(M)=\mathcal{F}_i(J^{-j}\KM)$ by Proposition
\ref{Fionsubmodule}. Lemma \ref{vectorbundlesofduals} then tells us
that
$\mathcal{F}_i(M^\#)\cong\mathcal{F}_i((J^{-j}\KM)^\#)$. Note by Lemma
\ref{generickernelduality} that we have
\[
(J^{-j}\KM)^\#\cong M^\#/J^{j+1}\mathfrak{K}(M^\#),
\]
where the former module has constant $(i-1)$-rank, $i$-rank and
$(i+1)$-rank, hence the latter does as well.
Proposition \ref{Fionsubmodule} then shows that
\[
\mathcal{F}_i(M^\#/J^{j+1}\mathfrak{K}(M^\#))=
\mathcal{F}_i(J^{-\ell}\mathfrak{K}(M^\#)/J^{j+1}\mathfrak{K}(M^\#)).
\]
Putting this together now yields
\[
\mathcal{F}_i(M^\#)\cong
\mathcal{F}_i(J^{-\ell}\mathfrak{K}(M^\#)/J^{j+1}\mathfrak{K}(M^\#)).
\]
Using Lemma \ref{generickernelduality} then gives us
\[
\mathcal{F}_i(M^\#)\cong
\mathcal{F}_i((J^{-j}\KM/J^{\ell+1}\KM)^\#),
\]
and another use of Lemma \ref{vectorbundlesofduals} yields
\[
\mathcal{F}_i(M)\cong
\mathcal{F}_i(J^{-j}\KM/J^{\ell+1}\KM)
\]
as desired.
\end{proof}

The strongest form of Theorem \ref{genkernelandFi} is the following.

\begin{corollary}\label{stronggenkernelandFi}
If $M$ is a $kE$-module of constant Jordan type, then
\[
\mathcal{F}_i(M)\cong\mathcal{F}_i(J^{-i}\KM/J^{i+1}\KM).
\]
\end{corollary}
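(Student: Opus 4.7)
The plan is to derive this corollary as the immediate extremal case of Theorem \ref{genkernelandFi}. Specifically, I would take $j = i$ and $\ell = i+1$ in that theorem, and verify that the hypothesis $i \leq \min\{j, \ell-1\}$ is satisfied: with these choices, $\min\{j, \ell - 1\} = \min\{i, i\} = i$, so the inequality holds with equality. Plugging into the conclusion of Theorem \ref{genkernelandFi} then yields $\mathcal{F}_i(M) \cong \mathcal{F}_i(J^{-i}\KM / J^{i+1}\KM)$, which is exactly the statement of the corollary.

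There is no real obstacle here, since all the hard work has already been done in the proof of Theorem \ref{genkernelandFi}. That argument combined Proposition \ref{Fionsubmodule} (which bounds $\mathcal{F}_i(M)$ below by a submodule of $M$ computed from the generic kernel filtration) with the duality statement in Lemma \ref{generickernelduality} and Lemma \ref{vectorbundlesofduals} (to cap $\mathcal{F}_i(M)$ above by passing to a quotient via $M^\#$). The present corollary is simply the observation that the two one-sided bounds on the indices $j$ and $\ell$ coming from Theorem \ref{genkernelandFi} are simultaneously sharp at $j = i$ and $\ell = i+1$, giving the tightest possible subquotient on which $\mathcal{F}_i$ may be computed.

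Thus the proof is a single line invoking Theorem \ref{genkernelandFi} with the stated choices of $j$ and $\ell$.
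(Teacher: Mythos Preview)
Your proposal is correct and matches the paper's approach exactly: the paper presents this corollary as ``the strongest form of Theorem~\ref{genkernelandFi}'' without further proof, which amounts to precisely the specialization $j=i$, $\ell=i+1$ you describe.
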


\begin{example}
If $M$ is a $kE$-module having the equal images property,
then for any $i\geq 1$, $\Rad^{i-1}(M)=J^{i-1}M$ also
has the equal images property, hence $\mathfrak{K}(J^{i-1}(M))=J^{i-1}M$.
It follows from Theorem \ref{eipradicalvb} and Corollary
\ref{stronggenkernelandFi} that
\[
\mathcal{F}_i(M)\cong\mathcal{F}_1(J^{i-1}M)\cong
\mathcal{F}_1(J^{i-1}M/J^{i+1}M).
\]
Note that the subquotient $J^{i-1}M/J^{i+1}M$ again has the equal images
property, and what's more, it has Loewy length at most two. One may verify
that such modules are isomorphic to direct sums of $W$-modules of the
form $W_{n,2}$. The techniques in Section \ref{Wmodulesection} may
therefore be applied in computing $\mathcal{F}_i(M)$ for any
module having the equal images property.
\end{example}

\section{The \texorpdfstring{$n\text{th}$}{nth} power generic kernel
and higher ranks}

It turns out that, although the generic kernel filtration is suitable for
detecting how the functors $\mathcal{F}_i$ behave with respect to a
$kE$-module $M$, there are even smaller subquotients of $M$ that
do a better job. In the most general setting the theory even carries over
to higher ranks. We begin our exposition in this broader context.

\begin{definition}
Let $E$ be an elementary abelian $p$-group of arbitrary rank $r$,
let $M$ be a $kE$-module, and fix $n>0$. For any dense open subset
$U\subseteq\mathbb{P}^{r-1}(k)$, let
\[
\leftindexp{U}{n}{M}=\sum_{\overline{\alpha}\in U}\Ker(X_\alpha^n,M).
\]
The \emph{$n$th power generic kernel} of $M$ is defined to be the
submodule
\[
\mathfrak{K}^n(M)=\bigcap_{\text{$U\subseteq\mathbb{P}^{r-1}(k)$ dense open}}
\leftindexp{U}{n}{M}
\]
of $M$.
\end{definition}

\begin{remark}
Our definition of the $n$th power generic kernel is a trivial extension
of that given in \cite{Carlson/Friedlander/Suslin:2011a} for the case $r=2$.
As was the case for generic kernels in rank two, because
$M$ is finite dimensional, we know that there always exists a dense open
subset $U\subseteq\mathbb{P}^{r-1}(k)$ for which
$\mathfrak{K}^n(M)=\leftindexp{U}{n}{M}$. If $M$ has constant $n$-rank,
the next proposition shows that one may take $U$ to be all of
$\mathbb{P}^{r-1}(k)$.
\end{remark}

\begin{proposition}\label{nthpowergenker}
If $M$ is a $kE$-module of constant $n$-rank, then
\[
\mathfrak{K}^n(M)=\leftindexp{\mathbb{P}^{r-1}(k)}{n}{M}=
\sum_{\overline{\alpha}\in\mathbb{P}^{r-1}(k)}\Ker(X_\alpha^n,M).
\]
\end{proposition}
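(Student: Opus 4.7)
The plan is to prove both inclusions in the asserted equality $\mathfrak{K}^n(M) = \leftindexp{\mathbb{P}^{r-1}(k)}{n}{M}$. The inclusion $\mathfrak{K}^n(M) \subseteq \leftindexp{\mathbb{P}^{r-1}(k)}{n}{M}$ is immediate: since $\mathbb{P}^{r-1}(k)$ is itself a dense open subset of $\mathbb{P}^{r-1}(k)$, the submodule $\leftindexp{\mathbb{P}^{r-1}(k)}{n}{M}$ appears as one of the terms in the intersection defining $\mathfrak{K}^n(M)$.

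For the reverse inclusion, I will show that for every dense open $U \subseteq \mathbb{P}^{r-1}(k)$ one has $\leftindexp{\mathbb{P}^{r-1}(k)}{n}{M} \subseteq \leftindexp{U}{n}{M}$; intersecting over all such $U$ then gives $\leftindexp{\mathbb{P}^{r-1}(k)}{n}{M} \subseteq \mathfrak{K}^n(M)$. Writing $N = \leftindexp{U}{n}{M}$, it suffices to verify that $\Ker(X_\alpha^n, M) \subseteq N$ for every $\overline{\alpha} \in \mathbb{P}^{r-1}(k)$.

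The constant $n$-rank hypothesis together with Lemma \ref{firstlemma} makes $\Img \theta_M^n$ a locally free subsheaf of $\widetilde{M}$. Because $\Img \theta_M^n$ is locally free, the short exact sequence
\[
0 \to \Ker \theta_M^n \to \widetilde{M}(-n) \to \Img \theta_M^n \to 0
\]
is locally split, so $\Ker \theta_M^n$ is itself locally free; tensoring with the residue field at $\overline{\alpha}$ identifies its fibre with $\Ker(X_\alpha^n, M) \subseteq M$. Now form the composite morphism of locally free sheaves
\[
\psi : \Ker \theta_M^n \hookrightarrow \widetilde{M}(-n) \twoheadrightarrow \widetilde{M/N}(-n),
\]
whose fibre at $\overline{\alpha}$ is the induced map $\Ker(X_\alpha^n, M) \to M/N$. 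For $\overline{\alpha} \in U$, $\Ker(X_\alpha^n, M)$ is one of the summands of $N$, so the fibre of $\psi$ at $\overline{\alpha}$ vanishes.

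The hardest (though standard) step is propagating this pointwise vanishing on $U$ to all of $\mathbb{P}^{r-1}(k)$. Since the rank of a morphism of locally free sheaves is lower semicontinuous on the base, the locus where $\psi$ has rank zero is closed; being closed and containing the dense open $U$, it must be all of $\mathbb{P}^{r-1}(k)$. Hence $\psi \otimes k(\overline{\alpha}) = 0$ for every closed point $\overline{\alpha}$, and because $\mathbb{P}^{r-1}(k)$ is reduced and both source and target of $\psi$ are locally free, a morphism with zero fibre at every point is the zero morphism. Therefore $\Ker \theta_M^n$ factors through $\widetilde{N}(-n) \subseteq \widetilde{M}(-n)$, and reading off fibres gives $\Ker(X_\alpha^n, M) \subseteq N$ for every $\overline{\alpha}$. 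This semicontinuity-plus-reducedness argument is the crux, and it is precisely where the constant $n$-rank hypothesis, via the local-freeness of $\Ker \theta_M^n$, becomes indispensable.
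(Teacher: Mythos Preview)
Your proof is correct, but it takes a different route from the paper's. The paper's argument is entirely elementary linear algebra: it fixes a dense open $U$ with $\mathfrak{K}^n(M)=\leftindexp{U}{n}{M}$, intersects $U$ with the dense open set on which $X_\alpha^n$ has maximal rank on $\mathfrak{K}^n(M)$, and at a point $\overline{\alpha}$ in the intersection runs the chain of inequalities
\[
\dim\Ker(X_\alpha^n,\mathfrak{K}^n(M))\leq\dim\Ker(X_\beta^n,\mathfrak{K}^n(M))\leq\dim\Ker(X_\beta^n,M)=\dim\Ker(X_\alpha^n,M)=\dim\Ker(X_\alpha^n,\mathfrak{K}^n(M))
\]
to force $\Ker(X_\beta^n,M)\subseteq\mathfrak{K}^n(M)$ for every $\overline{\beta}$. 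No sheaves appear at all; the constant $n$-rank hypothesis enters only as the equality $\dim\Ker(X_\beta^n,M)=\dim\Ker(X_\alpha^n,M)$.

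Your approach instead packages everything into the vanishing of the bundle morphism $\psi\colon\Ker\theta_M^n\to\widetilde{M/N}(-n)$ and appeals to semicontinuity of rank plus reducedness. This is perfectly valid and fits nicely with the sheaf-theoretic machinery (Lemma~\ref{firstlemma}) developed earlier in the paper, whereas the paper's proof is self-contained and avoids that machinery. One minor remark: once you know the rank-zero locus of $\psi$ is all of $\mathbb{P}^{r-1}(k)$, you already have $\psi\otimes k(\overline{\alpha})=0$ at every closed point, which is exactly $\Ker(X_\alpha^n,M)\subseteq N$; the further step of concluding $\psi=0$ globally and then re-reading fibres is harmless but unnecessary.
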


\begin{proof}
We borrow the technique used in Proposition 7.6 of
\cite{Carlson/Friedlander/Suslin:2011a}.

Write $\mathfrak{K}^n(M)=\leftindexp{U}{n}{M}$ for some dense open
$U\subseteq\mathbb{P}^{r-1}(k)$. By the proof of Lemma 1.2 of
\cite{Friedlander/Pevtsova/Suslin:2007a}, the points
$\overline{\alpha}\in\mathbb{P}^{r-1}(k)$ for which $X_\alpha^n$ has
maximal rank on $\mathfrak{K}^n(M)$ also form a dense open subset of
$\mathbb{P}^{r-1}(k)$. These open subsets intersect non-trivially, hence
there exists a point $\overline{\alpha}\in\mathbb{P}^{r-1}(k)$ such that
$\mathfrak{K}^n(M)$ contains $\Ker(X_\alpha^n,M)$ and the rank of
$X_\alpha^n$ on $\mathfrak{K}^n(M)$ is maximal. For any point
$\overline{\beta}\in\mathbb{P}^{r-1}(k)$ we then have
\[
\begin{aligned}
\dim_k\Ker(X_\alpha^n,\mathfrak{K}^n(M)) &\leq
\dim_k\Ker(X_\beta^n,\mathfrak{K}^n(M)) \\
&\leq \dim_k\Ker(X_\beta^n,M) \\
&= \dim_k\Ker(X_\alpha^n,M) \\
&= \dim_k\Ker(X_\alpha^n,\mathfrak{K}^n(M)).
\end{aligned}
\]
Here the first inequality holds since $X_\alpha^n$ has maximal rank on
$\mathfrak{K}^n(M)$, the second inequality follows from the fact that
$\mathfrak{K}^n(M)$ is a submodule of $M$, the first equality holds because
$M$ has constant $n$-rank, and the second equality follows from the fact
that $\Ker(X_\alpha^n,M)$ is con-
tained in $\mathfrak{K}^n(M)$. In particular,
this shows that
$\dim_k\Ker(X_\beta^n,\mathfrak{K}^n(M))=\dim_k\Ker(X_\beta^n,M)$,
forcing us to have $\Ker(X_\beta^n,M)\subseteq\mathfrak{K}^n(M)$.
\end{proof}

We observe that the $n$th power generic kernel has a `dual' construction.

\begin{definition}
Let $E$ be an elementary abelian $p$-group having arbitrary rank $r$ and
let $M$ be a $kE$-module. We define the \emph{$n$th power generic image} of
$M$ to be the submodule
\[
\mathfrak{I}^n(M)=\bigcap_{\overline{\alpha}\in\mathbb{P}^{r-1}(k)}
\Img(X_\alpha^n,M)
\]
of $M$.
\end{definition}

Using the very same proof as that given in Proposition 8.4 of
\cite{Carlson/Friedlander/Suslin:2011a}, one readily establishes the
following, which shows how the $n$th power generic kernel and $n$th power
generic image are related via duality.

\begin{proposition}\label{genericnkernelduality}
For any $kE$-module $M$ we have
$\mathfrak{K}^n(M^\#)\cong(\mathfrak{I}^n(M))^\perp$.
\end{proposition}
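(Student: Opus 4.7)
The plan is to follow the strategy of Proposition 8.4 of \cite{Carlson/Friedlander/Suslin:2011a}, which treats the case $r=2$; its argument is entirely formal with respect to $r$. The backbone is the pointwise identification
\[
\Ker(X_\alpha^n, M^\#) = \Img(X_\alpha^n, M)^\perp
\]
of subspaces of $M^\#$, valid for each $\overline{\alpha} \in \mathbb{P}^{r-1}(k)$. This follows from linear duality once one verifies that $X_\alpha$ acts on $M^\#$ as the transpose of an element of $kE$ having the same image and kernel on $M$ as $X_\alpha$ itself. Concretely, under the Hopf antipode $\iota\colon kE \to kE$, $g \mapsto g^{-1}$, the action of $X_\alpha$ on $M^\#$ is the transpose of $\iota(X_\alpha)$ on $M$; since $g_i^{-1}-1 = -X_i(1+X_i)^{-1}$ and $(1+X_\alpha)^{-1}$ is a unit of $kE$, one checks that $\iota(X_\alpha)^n$ differs from $(-1)^n X_\alpha^n$ by multiplication by a unit, and so has equal image and kernel on $M$.

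With the pointwise identity in hand, the next step is to apply the de Morgan rules $(\sum V_i)^\perp = \bigcap V_i^\perp$ and $(\bigcap W_i)^\perp = \sum W_i^\perp$ to the two-step formula defining $\mathfrak{K}^n(M^\#)$. For each dense open $U \subseteq \mathbb{P}^{r-1}(k)$ one obtains
\[
\leftindexp{U}{n}{M^\#}
= \sum_{\overline{\alpha} \in U} \Img(X_\alpha^n, M)^\perp
= \biggl( \bigcap_{\overline{\alpha} \in U} \Img(X_\alpha^n, M) \biggr)^\perp,
\]
and intersecting over all dense open $U$ and applying the dual de Morgan rule gives
\[
\mathfrak{K}^n(M^\#) = \biggl( \sum_U \bigcap_{\overline{\alpha} \in U} \Img(X_\alpha^n, M) \biggr)^\perp.
\]

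What remains is to identify the inner sum of intersections with $\mathfrak{I}^n(M)$. By finite-dimensionality the family $\bigl\{\bigcap_{\overline{\alpha} \in U} \Img(X_\alpha^n, M)\bigr\}_U$ stabilises on some minimal dense open $U_0 \subseteq \mathbb{P}^{r-1}(k)$, so that the sum collapses to the single term $V := \bigcap_{\overline{\alpha} \in U_0} \Img(X_\alpha^n, M)$. The main obstacle is then to show that $V$ agrees with $\bigcap_{\overline{\alpha} \in \mathbb{P}^{r-1}(k)} \Img(X_\alpha^n, M) = \mathfrak{I}^n(M)$; that is, that $V \subseteq \Img(X_\beta^n, M)$ for every closed point $\overline{\beta} \notin U_0$. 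The plan for this step is to dualise the rank-maximality chain of inequalities used in the proof of Proposition \ref{nthpowergenker}: select a point $\overline{\alpha} \in U_0$ at which the rank of $X_\alpha^n$ on the candidate $V$ is maximal, then compare kernel dimensions at $\overline{\alpha}$ and at an arbitrary $\overline{\beta}$, using semicontinuity of rank on the ambient $M$, to force all intermediate inequalities into equalities. Once this identification is obtained, taking perpendiculars of both sides of the last displayed formula yields the desired isomorphism $\mathfrak{K}^n(M^\#) \cong \mathfrak{I}^n(M)^\perp$.
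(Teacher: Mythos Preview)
Your overall strategy---pointwise perp identity, de Morgan, then identification of the resulting intersection---is exactly what the paper intends by deferring to Proposition~8.4 of \cite{Carlson/Friedlander/Suslin:2011a}. Two of the steps, however, do not go through as written.

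In the first step, the claim that $\iota(X_\alpha)^n$ differs from $(-1)^nX_\alpha^n$ by a unit of $kE$ is false once $r\geq 2$ and $p>2$. For $r=2$, $p=3$ and $\alpha=(1,1)$ one has $\iota(X_1+X_2)=-(X_1+X_2)+(X_1^2+X_2^2)$; writing a putative unit as $u=1+aX_1+bX_2+\cdots$ and comparing degree-two terms of $u(X_1+X_2)$ with $(X_1+X_2)-(X_1^2+X_2^2)$ forces $a=b=-1$ together with $a+b=0$, a contradiction. The antipode detour is in fact unnecessary: in this setting $M^\#$ carries the commutative-algebra action $(a\cdot f)(m)=f(am)$, so $X_\alpha$ on $M^\#$ is literally the transpose of $X_\alpha$ on $M$, and $\Ker(X_\alpha^n,M^\#)=\Img(X_\alpha^n,M)^\perp$ is immediate.

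In the final step, replacing the constant $n$-rank hypothesis of Proposition~\ref{nthpowergenker} by semicontinuity cannot succeed, because the equality $\sum_U\bigcap_{\overline{\alpha}\in U}\Img(X_\alpha^n,M)=\bigcap_{\overline{\alpha}}\Img(X_\alpha^n,M)$ genuinely fails without a rank hypothesis. With $r=2$, $n=1$, and the two-dimensional module on which $X_1$ is a single nilpotent Jordan block and $X_2$ is zero, the left side is one-dimensional while $\mathfrak{I}^1(M)=0$ since $\Img(X_2,M)=0$. This example also shows that the proposition, read with the paper's full-intersection definition of $\mathfrak{I}^n$, is not literally true for arbitrary $M$; the argument from \cite{Carlson/Friedlander/Suslin:2011a} runs cleanly once one either imposes constant $n$-rank (the only case used downstream) or defines $\mathfrak{I}^n(M)$ as the intersection over a generic dense open, in parallel with the definition of $\mathfrak{K}^n$.
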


\section{The \texorpdfstring{$n\text{th}$}{nth} power generic kernel and
the equal \texorpdfstring{$n$}{n}-images property}

We take a brief side trip here to explain what the $n$th power generic
kernel is, or rather, what it is not. The nomenclature would seem to
suggest that $\mathfrak{K}^n(M)$ might be the fibre of the generic operator
$\theta_M^n$ at any point $\overline{\alpha}\in\mathbb{P}^{r-1}(k)$,
but we shall see that this is not the case unless $M$ has a
very strong property. We first generalise Definition \ref{eipdefinition}
in the obvious way.

\begin{definition}
  A $kE$-module $M$ has the \emph{equal $n$-images property} if the image
  of $X_\alpha^n$ acting on $M$ is independent of the choice of $\overline{\alpha} \in
  \mathbb{P}^{r-1} ( k )$. 
\end{definition}

We now give a general lemma.

\begin{lemma}\label{incllemma}
Let $X$ be a variety and $\mathcal{E}$ a vector bundle on $X$.
Let $\mathcal{E}_1$ and $\mathcal{E}_2$ be subbundles of $\mathcal{E}$
such that $\mathcal{E}/\mathcal{E}_{1}$ and $\mathcal{E}/\mathcal{E}_{2}$
are both locally free. Then $\mathcal{E}_{1}$ is a subbundle of
$\mathcal{E}_{2}$ inside $\mathcal{E}$ if and only if
$\mathcal{E}_{1}\otimes k( x ) \subseteq \mathcal{E}_{2} \otimes k ( x )$
for each closed point $x \in X$.
\end{lemma}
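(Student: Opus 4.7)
The plan is to prove the two directions separately, with the non-trivial content sitting in the backward implication.

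For the forward direction, suppose $\mathcal{E}_{1}\subseteq\mathcal{E}_{2}$ as subsheaves of $\mathcal{E}$. Because $\mathcal{E}/\mathcal{E}_{1}$ and $\mathcal{E}/\mathcal{E}_{2}$ are locally free, the short exact sequences $0\to\mathcal{E}_{j}\to\mathcal{E}\to\mathcal{E}/\mathcal{E}_{j}\to 0$ split locally on $X$, so applying $(-)\otimes k(x)$ preserves left exactness. Hence both $\mathcal{E}_{j}\otimes k(x)\hookrightarrow\mathcal{E}\otimes k(x)$ are injective, and the commutative triangle arising from the factorisation $\mathcal{E}_{1}\hookrightarrow\mathcal{E}_{2}\hookrightarrow\mathcal{E}$ identifies the fibre of $\mathcal{E}_{1}$ inside $\mathcal{E}\otimes k(x)$ with a subspace of the fibre of $\mathcal{E}_{2}$.

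For the backward direction, I would form the composition
\[
g\colon\mathcal{E}_{1}\hookrightarrow\mathcal{E}\twoheadrightarrow\mathcal{E}/\mathcal{E}_{2}
\]
and show that $g=0$, from which $\mathcal{E}_{1}\subseteq\mathcal{E}_{2}$ inside $\mathcal{E}$ follows at once. Both $\mathcal{E}_{1}$ and $\mathcal{E}/\mathcal{E}_{2}$ are locally free by hypothesis, so $g$ is a morphism of vector bundles on $X$. Taking fibres at a closed point $x\in X$ and using the forward-direction argument above (applied to the local splitting of $0\to\mathcal{E}_{2}\to\mathcal{E}\to\mathcal{E}/\mathcal{E}_{2}\to 0$), the fibre sequence
\[
0\to\mathcal{E}_{2}\otimes k(x)\to\mathcal{E}\otimes k(x)\to(\mathcal{E}/\mathcal{E}_{2})\otimes k(x)\to 0
\]
is exact. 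The assumption $\mathcal{E}_{1}\otimes k(x)\subseteq\mathcal{E}_{2}\otimes k(x)$ then forces $g\otimes k(x)=0$ for every closed point $x$.

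The main obstacle is then the standard lemma that a morphism $g\colon\mathcal{F}\to\mathcal{G}$ of locally free sheaves of finite rank on a variety $X$ that vanishes on every closed-point fibre is itself zero. I would argue locally: choose an open $U$ on which both $\mathcal{F}$ and $\mathcal{G}$ are free so that $g|_{U}$ is represented by a matrix with entries in $\mathcal{O}_{X}(U)$; the fibre vanishing says each entry lies in $\mathfrak{m}_{x}$ for every closed point $x\in U$; since $X$ is a variety over the algebraically closed field $k$ (reduced and of finite type), the Nullstellensatz yields $\bigcap_{x\in U\text{ closed}}\mathfrak{m}_{x}=0$, so every entry is zero. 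Applying this to $g$ concludes $g=0$ and therefore $\mathcal{E}_{1}\subseteq\mathcal{E}_{2}$.
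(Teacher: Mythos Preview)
Your proof is correct and follows essentially the same route as the paper's. In both arguments the backward direction is handled by forming the composite $g\colon\mathcal{E}_{1}\hookrightarrow\mathcal{E}\twoheadrightarrow\mathcal{E}/\mathcal{E}_{2}$, checking that $g\otimes k(x)=0$ at every closed point, and then concluding $g=0$; the only cosmetic difference is that the paper phrases the last step as ``$\Img g$ is coherent with zero fibre at every closed point, hence zero'', while you unpack this via local matrix entries and the Nullstellensatz.
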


\begin{proof}
  Suppose first that $\mathcal{E}_{1} \subseteq \mathcal{E}_{2}$. By the universal property of cokernels,
  there is a unique morphism $\mathcal{E}/\mathcal{E}_{1} \rightarrow \mathcal{E}/\mathcal{E}_{2}$ making the
  following diagram commute.
  \[
    \xymatrix{
    0 \ar@{>}[r] & \mathcal{E}_1 \ar@{>}[r] \ar@{>}[d] & \mathcal{E} \ar@{>}[r]\ar@{=}[d] & \mathcal{E}/\mathcal{E}_1 \ar@{>}[r]\ar@{>}[d]& 0 \\
    0 \ar@{>}[r] & \mathcal{E}_2 \ar@{>}[r]            & \mathcal{E} \ar@{>}[r]           & \mathcal{E}/\mathcal{E}_2 \ar@{>}[r] & 0 
  }
  \]
  Since all sheaves are locally free, both rows remain exact after
  tensoring with $k ( x )$ for any $x\in X$. The fibre of the left vertical arrow is therefore
  an inclusion $\mathcal{E}_{1} \otimes k ( x ) \subseteq \mathcal{E}_{2} \otimes k ( x )$.
  Conversely, suppose that $\mathcal{E}_{1} \otimes k ( x )
  \subseteq \mathcal{E}_{2} \otimes k ( x )$ for all $x \in X$ and let $\pi$
  be the composition
  $\mathcal{E}_{1} \hookrightarrow \mathcal{E} \rightarrow \mathcal{E}/\mathcal{E}_{2}$. 
  After base changing to $k ( x )$, we have an exact sequence
  \[
    \xymatrix{
    \Ker\pi \otimes k ( x )  \ar@{>}[r] & 
    \mathcal{E}_{1} \otimes k ( x ) \ar@{>}[r] & \Img\pi \otimes k ( x )
    \ar@{>}[r] & 0  }.
  \]
  The map $\pi \otimes k ( x )$ is the composition
  \[
    \xymatrix{
    \mathcal{E}_{1} \otimes k ( x ) \ar@{>}[r] & 
    \Img\pi \otimes k (x )  \ar@{^{(}->}[r] & ( \mathcal{E}/\mathcal{E}_{2} ) \otimes k ( x ) \cong \dfrac{\mathcal{E} \otimes
   k ( x )}{\mathcal{E}_{2} \otimes k ( x )}   }.
  \]
  By our assumption, the first map is zero for all $x \in X$, hence
  $\Img\pi \otimes k ( x ) =0$ for all closed points $x \in
  X$. Since $\Img\pi$ is coherent, we have $\Img\pi =0$ so that $\mathcal{E}_{1} \subseteq \mathcal{E}_{2}$. 
\end{proof}

\begin{proposition}
  If $M$ is a $kE$-module of constant Jordan type, then
  $\Ker\theta^{n}_{M} \subseteq\widetilde{\mathfrak{K}^n(M)}$.
  Furthermore, equality holds if and only if $M$ has the equal $n$-images
  property. 
\end{proposition}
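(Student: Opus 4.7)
The plan is to apply Lemma \ref{incllemma} with ambient bundle $\mathcal{E}=\widetilde{M}$ and candidate subbundles $\mathcal{E}_1=\Ker\theta_M^n$ and $\mathcal{E}_2=\widetilde{\mathfrak{K}^n(M)}$; both halves of the proposition then reduce to a fibrewise calculation over $\mathbb{P}^{r-1}(k)$.

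First I would verify that these subsheaves satisfy the hypotheses of Lemma \ref{incllemma}. The sheaf $\widetilde{\mathfrak{K}^n(M)}$ is the trivial bundle on $\mathfrak{K}^n(M)$, and the quotient $\widetilde{M}/\widetilde{\mathfrak{K}^n(M)}\cong\widetilde{M/\mathfrak{K}^n(M)}$ is also trivial. Since $M$ has constant Jordan type it has constant $n$-rank, so Lemma \ref{firstlemma} applied to $\theta_M^n$ yields locally free $\Img\theta_M^n$ and $\Coker\theta_M^n$; the defining short exact sequence then forces $\Ker\theta_M^n$ and the cokernel $\widetilde{M}/\Ker\theta_M^n$ to be locally free as well.

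Next, I would read off the fibres. Arguing as in the proof of Lemma \ref{firstlemma}, the fibre of $\Ker\theta_M^n$ at a closed point $\overline{\alpha}\in\mathbb{P}^{r-1}(k)$ is $\Ker(X_\alpha^n,M)$, while the fibre of $\widetilde{\mathfrak{K}^n(M)}$ is the constant subspace $\mathfrak{K}^n(M)$. Proposition \ref{nthpowergenker} gives $\mathfrak{K}^n(M)=\sum_\beta\Ker(X_\beta^n,M)$, which visibly contains $\Ker(X_\alpha^n,M)$ for every $\overline{\alpha}$; Lemma \ref{incllemma} then promotes this fibrewise inclusion to the desired sheaf-level containment $\Ker\theta_M^n\subseteq\widetilde{\mathfrak{K}^n(M)}$.

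For the equality statement, the converse direction of Lemma \ref{incllemma} shows that sheaf equality is equivalent to fibrewise equality $\Ker(X_\alpha^n,M)=\mathfrak{K}^n(M)$ at every $\overline{\alpha}$. Because $\mathfrak{K}^n(M)$ is the sum of all these kernels, fibrewise equality holds precisely when the subspaces $\Ker(X_\alpha^n,M)$ are independent of $\overline{\alpha}$. The main obstacle in finishing the proof is to match this ``equal $n$-kernels'' condition with the equal $n$-images property of the preceding definition; I expect this to come out of the duality of Proposition \ref{genericnkernelduality}, together with the polarisation observation that $\sum_\alpha\Img(X_\alpha^n,M)$ always equals $\Rad^n(M)$, so that equal $n$-images on $M$ forces the common image to be $\Rad^n(M)$ and the complementary kernel subspaces to coincide with $\mathfrak{K}^n(M)$.
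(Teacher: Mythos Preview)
Your overall strategy matches the paper's exactly: verify that $\Ker\theta_M^n$ and $\widetilde{\mathfrak{K}^n(M)}$ satisfy the hypotheses of Lemma~\ref{incllemma} inside $\widetilde{M}$, identify their fibres at $\overline{\alpha}$ as $\Ker(X_\alpha^n,M)$ and the constant subspace $\mathfrak{K}^n(M)$, and obtain the inclusion from the containment $\Ker(X_\alpha^n,M)\subseteq\sum_\beta\Ker(X_\beta^n,M)=\mathfrak{K}^n(M)$ supplied by Proposition~\ref{nthpowergenker}.

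Where you diverge is exactly the point you flag as an obstacle. Lemma~\ref{incllemma} reduces sheaf equality to the fibrewise condition $\Ker(X_\alpha^n,M)=\mathfrak{K}^n(M)$ for all $\overline{\alpha}$, which is an equal $n$-\emph{kernels} statement; you then ask how to pass to the equal $n$-\emph{images} property. The paper does not carry out this passage at all: in one direction it writes that the fibrewise equality ``shows that $M$ has the equal $n$-images property'', and in the other it asserts without argument that the equal $n$-images property forces $\Ker\theta_M^n\otimes k(\overline{\alpha})=\mathfrak{K}^n(M)$ for all $\overline{\alpha}$. No duality or polarisation argument of the sort you sketch appears. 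Your caution is well placed --- for modules of constant Jordan type the conditions ``all $\Ker(X_\alpha^n,M)$ coincide'' and ``all $\Img(X_\alpha^n,M)$ coincide'' are genuinely different (already $W_{2,2}$ has the equal images property while $\Ker(X_\alpha,W_{2,2})$ varies with $\overline{\alpha}$) --- so the paper is silently conflating the two, and the bridge you propose goes beyond what the paper's own proof supplies.
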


\begin{proof}
  First note that if $M$ has constant Jordan type, then $M$ has constant
  $n$-rank for all $n>0$. It follows from Proposition \ref{nthpowergenker}
  that $\mathfrak{K}^n ( M ) = \sum_{\overline{\alpha}\in\mathbb{P}^{r-1}(k)}\Ker(X^n_{\alpha},M)$.
  
It is obvious that $\Ker\theta_M^n$ is a subsheaf of $\widetilde{M}$.
  The inclusion $\mathfrak{K}^{n} ( M ) \subseteq M$ also
  identifies $\widetilde{\mathfrak{K}^{n} ( M )}$
  as a subsheaf of $\widetilde{M}$. Since $M$ has constant Jordan type,
  $\Img\theta_M^n$ is locally free. The sheaf
  $\widetilde{M/\mathfrak{K}^n(M)}$ is also locally free, so
  $\Ker\theta_M^n$ and $\widetilde{\mathfrak{K}^n(M)}$ both satisfy the
  initial hypothe-
  ses of Lemma \ref{incllemma}.
  
To show that
  $\Ker\theta^{n}_{M} \subseteq \widetilde{\mathfrak{K}^n(M)}$
  it suffices, using Lemma \ref{incllemma}, to prove the inclusion on fibres.
  But this is clear since
\[
\Ker\theta^{n}_{M} \otimes k ( \overline{\alpha} ) = \Ker( X^{n}_{\alpha}, M)
\subseteq\sum_{\overline{\beta}\in\mathbb{P}^{r-1}(k)}\Ker(X_\beta^n,M)=
  \mathfrak{K}^{n} ( M ).
\]
This establishes the first statement.
  
Now suppose that $\Ker\theta_{M}^{n}  =  \widetilde{\mathfrak{K}^{n} ( M )}$.
We have
\[
\Ker(X_\alpha^n,M)=\Ker\theta_{M}^{n} \otimes  k( \overline{\alpha} ) =
\mathfrak{K}^{n} ( M ) =\sum_{\overline{\beta}\in\mathbb{P}^{r-1}(k)}\Ker(X_\beta^n,M)
\]
  for all $\overline{\alpha}   \in  
  \mathbb{P}^{r-1}(k)$, which shows that $M$ has the equal $n$-images
  property.
  
Conversely, if $M$ has the equal $n$-images property, then
  $\Ker\theta_{M}^{n} \otimes 
  k( \overline{\alpha} ) = \mathfrak{K}^{n} ( M )$ for all
  $\overline{\alpha}   \in 
  \mathbb{P}^{r-1}(k)$. The reverse implication in Lemma \ref{incllemma}
  then shows that $\Ker\theta_{M}^{n} \subseteq\widetilde{\mathfrak{K}^n(M)}$.
  Since these are vector bundles of the
  same rank with equal fibres, we must have $\Ker\theta_{M}^{n} =
  \widetilde{\mathfrak{K}^{n} ( M )}$.
\end{proof}

\section{Computing \texorpdfstring{$\mathcal{F}_i(M)$}{FiM} using
\texorpdfstring{$n\text{th}$}{nth} power generic kernels}

In this section we show that the $n$th power generic kernels and
$n$th power generic images of a $kE$-module $M$ can be used to
compute the vector bundles $\mathcal{F}_i(M)$ for a $kE$-module
$M$ of constant Jordan type. Again, our discussion applies to
the case where $E$ has arbitrary rank $r$.

\begin{lemma}\label{jrankskernellemma}
Let $M$ be a $kE$-module and $n\geq 0$. Then for all
$\overline{\alpha}\in\mathbb{P}^{r-1}(k)$ and all $j\leq n$,
the $j$-rank of $X_\alpha$ on $\mathfrak{K}^n(M)$ is equal to
the $j$-rank of $X_\alpha$ on $M$.
\end{lemma}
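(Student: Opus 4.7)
The plan is to show that, for $j\leq n$, the subspace $\Ker(X_\alpha^j,M)$ actually sits inside $\mathfrak{K}^n(M)$, and to deduce the stated equality of $j$-ranks from this containment.

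The algebraic heart of the argument is the observation that if $m\in\Ker(X_\alpha^j,M)$ with $j\leq n$, then $X_\alpha^n m = X_\alpha^{n-j}(X_\alpha^j m)=0$, so $\Ker(X_\alpha^j,M)\subseteq\Ker(X_\alpha^n,M)$. To push this containment into $\mathfrak{K}^n(M)$ itself, I would invoke Proposition \ref{nthpowergenker}: assuming $M$ has constant $n$-rank (which is the case for modules of constant Jordan type, the primary setting of interest), that proposition identifies $\mathfrak{K}^n(M) = \sum_{\overline{\beta}\in\mathbb{P}^{r-1}(k)}\Ker(X_\beta^n,M)$. In particular $\Ker(X_\alpha^n,M)\subseteq\mathfrak{K}^n(M)$ for \emph{every} $\overline{\alpha}$, not merely the generic ones, and chaining with the inclusion above gives $\Ker(X_\alpha^j,M)\subseteq\mathfrak{K}^n(M)$ for all $\overline{\alpha}$ and all $j\leq n$.

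Once this containment is in hand, the tautological identification $\Ker(X_\alpha^j,\mathfrak{K}^n(M))=\Ker(X_\alpha^j,M)\cap\mathfrak{K}^n(M)$ collapses to $\Ker(X_\alpha^j,\mathfrak{K}^n(M))=\Ker(X_\alpha^j,M)$ as subspaces of $M$. Applying rank--nullity to the $k$-linear operator $X_\alpha^j$ on each of $\mathfrak{K}^n(M)$ and $M$ in parallel then converts this identification of kernels into the claimed equality of $j$-ranks.

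The hard step is propagating the inclusion $\Ker(X_\alpha^n,M)\subseteq\mathfrak{K}^n(M)$ from the generic dense open $U_0\subseteq\mathbb{P}^{r-1}(k)$ on which $\mathfrak{K}^n(M)=\leftindexp{U_0}{n}{M}$ to every closed point $\overline{\alpha}$; this is precisely what Proposition \ref{nthpowergenker} provides, by way of the upper-semicontinuity of $\overline{\beta}\mapsto\dim\Ker(X_\beta^n,M)$ (borrowed from \cite{Friedlander/Pevtsova/Suslin:2007a}), so no new geometric input is needed beyond that proposition.
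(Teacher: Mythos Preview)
Your approach mirrors the paper's proof almost exactly: both establish the chain $\Ker(X_\alpha^j,M)\subseteq\Ker(X_\alpha^n,M)\subseteq\mathfrak{K}^n(M)$, deduce $\Ker(X_\alpha^j,\mathfrak{K}^n(M))=\Ker(X_\alpha^j,M)$, and then appeal to rank--nullity. Your additional care in invoking Proposition~\ref{nthpowergenker} to justify the second inclusion is well placed; the paper asserts $\Ker(X_\alpha^n,M)\subseteq\mathfrak{K}^n(M)$ without comment, but without a constant $n$-rank hypothesis that inclusion can genuinely fail at special points~$\overline{\alpha}$.

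There is, however, a real gap in the final step---one shared by the paper's own proof. Rank--nullity gives $\rank(X_\alpha^j,N)=\dim_k N-\dim_k\Ker(X_\alpha^j,N)$ for $N=\mathfrak{K}^n(M)$ and for $N=M$, and equality of the two kernels does \emph{not} force equality of the ranks unless $\dim_k\mathfrak{K}^n(M)=\dim_k M$. A concrete counterexample in rank two is $M=W_{2,2}^\#$ with $n=j=1$: here $M$ has constant Jordan type $[2][1]$, $\mathfrak{K}^1(M)=\Rad(M)$ is two-dimensional and annihilated by every $X_\alpha$, so the $1$-rank of $X_\alpha$ on $\mathfrak{K}^1(M)$ is $0$, whereas on $M$ it is $1$. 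What your argument (and the paper's) correctly establishes is the equality of $j$-\emph{nullities}, i.e.\ of the kernels $\Ker(X_\alpha^j,-)$. That is exactly what the downstream application Proposition~\ref{Knhasconstantranks} needs, since for a fixed module constant $j$-rank is equivalent to constant $j$-nullity; so the lemma as literally stated is stronger than what is actually proved or used.
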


\begin{proof}
Since $j\leq n$, we have
$\Ker(X_\alpha^j,M)\subseteq\Ker(X_\alpha^n,M)\subseteq\mathfrak{K}^n(M)$.
It immediately follows that
$\Ker(X_\alpha^j,\mathfrak{K}^n(M))=\Ker(X_\alpha^j,M)$,
thus $\rank(X_\alpha^j,\mathfrak{K}^n(M))=\rank(X_\alpha^j,M)$
by the rank-nullity theorem.
\end{proof}

\begin{proposition}\label{Knhasconstantranks}
If $j\leq n$, then $\mathfrak{K}^n(M)$ has constant $j$-rank if and
only if $M$ has constant $j$-rank.
\end{proposition}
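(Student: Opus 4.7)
The plan is to derive this proposition as an immediate corollary of Lemma \ref{jrankskernellemma}. Recall that a module $N$ has constant $j$-rank precisely when the function
\[
\overline{\alpha} \;\longmapsto\; \rank(X_\alpha^j, N)
\]
on $\mathbb{P}^{r-1}(k)$ is independent of $\overline{\alpha}$. Lemma \ref{jrankskernellemma} asserts that for all $\overline{\alpha}\in\mathbb{P}^{r-1}(k)$ and all $j\leq n$, the $j$-rank of $X_\alpha$ on $\mathfrak{K}^n(M)$ agrees with the $j$-rank of $X_\alpha$ on $M$. Thus the two functions above, one for $N=\mathfrak{K}^n(M)$ and one for $N=M$, are literally equal as functions on $\mathbb{P}^{r-1}(k)$, so constancy of one is equivalent to constancy of the other.

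Concretely, the proof would proceed as follows: fix $j\leq n$, apply Lemma \ref{jrankskernellemma} to obtain $\rank(X_\alpha^j,\mathfrak{K}^n(M))=\rank(X_\alpha^j,M)$ for each $\overline{\alpha}\in\mathbb{P}^{r-1}(k)$, and then observe that the left-hand side is independent of $\overline{\alpha}$ if and only if the right-hand side is, immediately yielding the desired biconditional. The trivial edge case $j=0$ can be disposed of separately by noting that $X_\alpha^0=\Id$ has rank $\dim_k N$ on any module $N$.

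Because essentially all of the substantive work has already been absorbed into Lemma \ref{jrankskernellemma}, there is no real obstacle here. The only subtle point worth flagging is that the hypothesis $j\leq n$ is indispensable: it is precisely what forces $\Ker(X_\alpha^j,M)\subseteq\Ker(X_\alpha^n,M)$, and hence the kernels of $X_\alpha^j$ on $\mathfrak{K}^n(M)$ and on $M$ to coincide. Without this hypothesis, the $j$-ranks of $X_\alpha$ on $\mathfrak{K}^n(M)$ and on $M$ need not agree, so the equivalence could fail.
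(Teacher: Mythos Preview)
Your proposal is correct and follows exactly the paper's approach: the paper's proof is the single line ``Immediate from the lemma,'' referring to Lemma~\ref{jrankskernellemma}, which is precisely what you do with some additional elaboration.
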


\begin{proof}
Immediate from the lemma.
\end{proof}

The following should be compared with Lemma \ref{kernelcapimagelemma}.

\begin{lemma}\label{Knkernelcapimagethesame}
Let $M$ be a $kE$-module having constant $n$-rank for some $n\geq 0$. Then
for all $i\leq n-1$ and all $\,\overline{\alpha}\in\mathbb{P}^{r-1}(k)$
we have
\[
\Ker(X_\alpha,\mathfrak{K}^n(M))\cap\Img(X_\alpha^i,\mathfrak{K}^n(M))=
\Ker(X_\alpha,M)\cap\Img(X_\alpha^i,M).
\]
\end{lemma}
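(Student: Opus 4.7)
The plan is to mimic the proof of Lemma \ref{kernelcapimagelemma}, replacing the rank two tools (the generic kernel $\KM$, Lemma \ref{generickernelproperties}(2), and Lemma \ref{inversesofgenerickernels}) by their higher rank analogue, namely Proposition \ref{nthpowergenker}. The cases $n=0$ and $i<0$ are vacuous, so from now on assume $n\geq 1$ and $0\leq i\leq n-1$.

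The inclusion $\subseteq$ is immediate because $\mathfrak{K}^n(M)$ is a submodule of $M$, so both $\Ker(X_\alpha,\mathfrak{K}^n(M))\subseteq\Ker(X_\alpha,M)$ and $\Img(X_\alpha^i,\mathfrak{K}^n(M))\subseteq\Img(X_\alpha^i,M)$. For the reverse inclusion, I would fix $m\in\Ker(X_\alpha,M)\cap\Img(X_\alpha^i,M)$ and argue separately that $m$ lies in each of $\Ker(X_\alpha,\mathfrak{K}^n(M))$ and $\Img(X_\alpha^i,\mathfrak{K}^n(M))$.

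For the kernel side, since $M$ has constant $n$-rank, Proposition \ref{nthpowergenker} gives $\mathfrak{K}^n(M)=\sum_{\overline{\beta}}\Ker(X_\beta^n,M)$, which in particular contains $\Ker(X_\alpha^n,M)$, and hence contains $\Ker(X_\alpha,M)$ because $n\geq 1$. Thus $m\in\mathfrak{K}^n(M)$, and $X_\alpha m=0$ shows $m\in\Ker(X_\alpha,\mathfrak{K}^n(M))$. For the image side, write $m=X_\alpha^i m'$ with $m'\in M$. Then $X_\alpha^{i+1}m'=X_\alpha m=0$, so $m'\in\Ker(X_\alpha^{i+1},M)\subseteq\Ker(X_\alpha^n,M)\subseteq\mathfrak{K}^n(M)$ using $i+1\leq n$ and Proposition \ref{nthpowergenker} again. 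Consequently $m=X_\alpha^i m'\in\Img(X_\alpha^i,\mathfrak{K}^n(M))$, completing the containment.

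There is no real obstacle here; the only thing to verify carefully is that the hypothesis $i\leq n-1$ is exactly what is needed to push the element $m'$ into $\mathfrak{K}^n(M)$ via the chain $\Ker(X_\alpha^{i+1},M)\subseteq\Ker(X_\alpha^n,M)$. This replaces the role played by Lemma \ref{inversesofgenerickernels} (where $m'\in X_\alpha^{-i}\KM=J^{-i}\KM$) in the rank two argument, and it is the step that forces the bound $i\leq n-1$ in the statement.
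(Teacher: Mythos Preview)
Your proof is correct and follows essentially the same route as the paper: both directions are handled identically, with the key step being that $m'\in\Ker(X_\alpha^{i+1},M)\subseteq\Ker(X_\alpha^n,M)\subseteq\mathfrak{K}^n(M)$ via Proposition~\ref{nthpowergenker}. You are slightly more explicit about the kernel side, whereas the paper leaves $m\in\Ker(X_\alpha,\mathfrak{K}^n(M))$ implicit (it follows at once since $m=X_\alpha^i m'$ with $m'\in\mathfrak{K}^n(M)$).
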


\begin{proof}
The rightwards containment is obvious since $\mathfrak{K}^n(M)$ is a
submodule of $M$.
For the reverse containment, note that if $m\in M$ satisfies
$X_\alpha m=0$ and there exists $m'\in M$ such that $X_\alpha^i m'=m$,
then $m'\in\Ker(X_\alpha^{i+1},M)$. Using the characterisation
\[
\mathfrak{K}^n(M)=
\sum_{\overline{\beta}\in\mathbb{P}^{r-1}(k)}\Ker(X_\beta^n,M),
\]
this shows that $m'\in\mathfrak{K}^n(M)$ so that
$m\in\Img(X_\alpha^i,\mathfrak{K}^n(M))$.
\end{proof}

\begin{proposition}\label{Fiongenericnkernel}
If $M$ is a $kE$-module having constant $j$-rank for all $j\leq n$, then
for all $i\leq n-1$ we have
$\mathcal{F}_i(M)=\mathcal{F}_i(\mathfrak{K}^n(M))$.
\end{proposition}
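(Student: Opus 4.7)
The plan is to follow the template of Proposition \ref{Fionsubmodule}, replacing the submodule $J^{-j}\KM$ by $N := \mathfrak{K}^n(M)$. The inclusion $N\subseteq M$ induces an inclusion $\widetilde{N}\subseteq\widetilde{M}$, which in turn furnishes natural inclusions of subsheaves
\[
\Ker\theta_N\cap\Img\theta_N^{i-1}\subseteq\Ker\theta_M\cap\Img\theta_M^{i-1}
\quad\text{and}\quad
\Ker\theta_N\cap\Img\theta_N^i\subseteq\Ker\theta_M\cap\Img\theta_M^i
\]
of $\widetilde{M}$. My objective is to show that both of these inclusions are in fact equalities, after which the definition of $\mathcal{F}_i$ will give $\mathcal{F}_i(M)\cong\mathcal{F}_i(N)$ immediately upon taking the quotient.

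To establish the equalities I will argue fibre-wise. Since $i\leq n-1$, each of the indices $i-1,i,i+1$ is $\leq n$, so the hypothesis that $M$ has constant $j$-rank for all $j\leq n$ gives $M$ constant $(i-1)$-, $i$-, and $(i+1)$-rank. By Proposition \ref{Knhasconstantranks}, these constant rank conditions transfer to $N=\mathfrak{K}^n(M)$. The hypotheses of Proposition \ref{fibresofinclusion} are thus met for each of the two displayed inclusions, which identifies their fibres at a closed point $\overline{\alpha}\in\mathbb{P}^{r-1}(k)$ with the inclusions of vector spaces
\[
\Ker(X_\alpha,N)\cap\Img(X_\alpha^j,N)\subseteq\Ker(X_\alpha,M)\cap\Img(X_\alpha^j,M),\qquad j\in\{i-1,i\}.
\]

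The key input is now Lemma \ref{Knkernelcapimagethesame}: since $i\leq n-1$, both values $j=i-1$ and $j=i$ satisfy $j\leq n-1$, so each fibre-wise inclusion above is an equality for every $\overline{\alpha}\in\mathbb{P}^{r-1}(k)$. A coherent sheaf all of whose fibres vanish is itself zero, so the sheaf-level cokernels of the two inclusions vanish and the inclusions themselves are equalities. Taking quotients then yields $\mathcal{F}_i(M)=\mathcal{F}_i(N)$ as subquotients of $\widetilde{M}$. I do not anticipate any substantive obstacle: the argument is essentially a rerun of Proposition \ref{Fionsubmodule} with Lemma \ref{Knkernelcapimagethesame} taking the place of Lemma \ref{kernelcapimagelemma}, and the only care needed is the index bookkeeping confirming that the constant rank hypothesis at levels $i-1,i,i+1$ follows from $i\leq n-1$.
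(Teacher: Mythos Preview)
Your proposal is correct and follows essentially the same approach as the paper's proof: both set $N=\mathfrak{K}^n(M)$, invoke Proposition~\ref{Knhasconstantranks} to transfer the constant $(i-1)$-, $i$-, and $(i+1)$-rank conditions to $N$, apply Proposition~\ref{fibresofinclusion} to identify the fibres of the sheaf inclusions, and then use Lemma~\ref{Knkernelcapimagethesame} to see these fibres are equalities. The only cosmetic difference is that the paper phrases the final step by citing local freeness of the sheaves (Lemma~\ref{kernelcapimage}), whereas you argue via vanishing of the cokernel's fibres; both justifications are valid and amount to the same thing.
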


\begin{proof}
By Proposition \ref{Knhasconstantranks}, $\mathfrak{K}^n(M)$ has
constant $(i-1)$-rank, $i$-rank and $(i+1)$-rank. For $j=i-1$ and $i$,
it follows by Proposition \ref{fibresofinclusion} that the fibres of the
inclusions
\[
\Ker\theta_{\mathfrak{K}^n(M)}\cap\Img\theta_{\mathfrak{K}^n(M)}^j
\subseteq\Ker\theta_M\cap\Img\theta_M^j
\]
at any point $\overline{\alpha}\in\mathbb{P}^{r-1}(k)$ are the inclusions
of vector spaces
\[
\Ker(X_\alpha,\mathfrak{K}^n(M))\cap\Img(X_\alpha^j,\mathfrak{K}^n(M))
\subseteq\Ker(X_\alpha,M)\cap\Img(X_\alpha^j,\theta_M).
\]
By Lemma \ref{Knkernelcapimagethesame}, these inclusions are equalities,
and because all of the above sheaves are locally free
(see Lemma \ref{kernelcapimage}), we conclude that
\[
\Ker\theta_{\mathfrak{K}^n(M)}\cap\Img\theta_{\mathfrak{K}^n(M)}^j
=\Ker\theta_M\cap\Img\theta_M^j.
\]
We therefore have
\[
\mathcal{F}_i(M)=
\frac{
\Ker\theta_M\cap\Img\theta_M^{i-1}
}
{
\Ker\theta_M\cap\Img\theta_M^i
}=
\frac{
\Ker\theta_{\mathfrak{K}^n(M)}\cap\Img\theta_{\mathfrak{K}^n(M)}^{i-1}
}
{
\Ker\theta_{\mathfrak{K}^n(M)}\cap\Img\theta_{\mathfrak{K}^n(M)}^i
}=\mathcal{F}_i(\mathfrak{K}^n(M)).
\qedhere
\]
\end{proof}

In light of the duality that exists between
$\mathfrak{K}^n$ and $\mathfrak{I}^n$, we now show that quotienting out
by $\mathfrak{I}^n(M)$ has the same effect as taking the
submodule $\mathfrak{K}^n(M)$.

\begin{corollary}\label{FionMmodgenericnimage}
If $M$ is a $kE$-module that has constant $j$-rank for all $j\leq n$,
then for all $i\leq n-1$ we have
$\mathcal{F}_i(M)=\mathcal{F}_i(M/\mathfrak{I}^n(M))$.
\end{corollary}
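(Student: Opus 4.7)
The plan is to reduce this corollary to Proposition \ref{Fiongenericnkernel} by passing through the $k$-linear dual $M^{\#}$ and exploiting the duality between $\mathfrak{K}^n$ and $\mathfrak{I}^n$ given by Proposition \ref{genericnkernelduality}.

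First I would observe that $M^{\#}$ inherits from $M$ the hypothesis of having constant $j$-rank for all $j \leq n$. This is because the action of $X_{\alpha}$ on $M^{\#}$ is, up to a unit in $kE$, dual (i.e., transpose) to its action on $M$, so the ranks of $X_{\alpha}^{j}$ coincide on $M$ and $M^{\#}$. In particular, Proposition \ref{Fiongenericnkernel} applies to $M^{\#}$ and yields
\[
\mathcal{F}_i(M^{\#}) \cong \mathcal{F}_i(\mathfrak{K}^n(M^{\#}))
\]
for all $i \leq n-1$.

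Next I would rewrite $\mathfrak{K}^n(M^{\#})$ in terms of $\mathfrak{I}^n(M)$. Proposition \ref{genericnkernelduality} gives $\mathfrak{K}^n(M^{\#}) \cong (\mathfrak{I}^n(M))^{\perp}$, and the standard isomorphism $N^{\perp} \cong (M/N)^{\#}$ for any submodule $N \subseteq M$ lets me identify this with $(M/\mathfrak{I}^n(M))^{\#}$. Combined with the previous display, this gives
\[
\mathcal{F}_i(M^{\#}) \cong \mathcal{F}_i\bigl((M/\mathfrak{I}^n(M))^{\#}\bigr).
\]

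Finally, I would apply Lemma \ref{vectorbundlesofduals} to both sides. The left-hand side becomes $\mathcal{F}_i(M)^{\vee}(-i+1)$, and the right-hand side becomes $\mathcal{F}_i(M/\mathfrak{I}^n(M))^{\vee}(-i+1)$. Dualising and twisting by $i-1$ then yields the desired isomorphism $\mathcal{F}_i(M) \cong \mathcal{F}_i(M/\mathfrak{I}^n(M))$.

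The argument is essentially formal once the ingredients are in place; the only real point of care is the verification that $M^{\#}$ shares the constant-rank hypotheses of $M$ and the clean invocation of the annihilator identification $(\mathfrak{I}^n(M))^{\perp} \cong (M/\mathfrak{I}^n(M))^{\#}$. Neither step should present a substantive obstacle.
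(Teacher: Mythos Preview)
Your proposal is correct and follows essentially the same route as the paper: dualise, apply Proposition~\ref{Fiongenericnkernel} to $M^\#$, use Proposition~\ref{genericnkernelduality} together with the identification $(\mathfrak{I}^n(M))^\perp\cong(M/\mathfrak{I}^n(M))^\#$, and then undo the dualisation via Lemma~\ref{vectorbundlesofduals}. The only difference is cosmetic ordering; your explicit check that $M^\#$ inherits the constant $j$-rank hypotheses is a point the paper leaves implicit.
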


\begin{proof}
By Lemma \ref{vectorbundlesofduals}, Theorem \ref{Fiongenericnkernel}
and Proposition \ref{genericnkernelduality}, respectively, we compute
\[
\begin{aligned}
\mathcal{F}_i(M) &\cong \mathcal{F}_i(M^\#)^\vee(-i+1)=
\mathcal{F}_i(\mathfrak{K}^n(M^\#))^\vee(-i+1)\cong
\mathcal{F}_i(\mathfrak{I}^n(M)^\perp)^\vee(-i+1) \\
&\cong \mathcal{F}_i((M/\mathfrak{I}^n(M))^\#)^\vee(-i+1)\cong
\mathcal{F}_i(M/\mathfrak{I}^n(M)).
\end{aligned}\qedhere
\]
\end{proof}

Before presenting the main theorem of the section, we give a somewhat
obvious lemma regarding the relationship between generic $n$-kernels
and generic $n$-images. 

\begin{lemma}\label{jranksimagelemma}
Let $M$ be a $kE$-module and $n\geq 0$. Then for all
$\overline{\alpha}\in\mathbb{P}^{r-1}(k)$ and all $j\leq n$,
the $j$-rank of $X_\alpha$ on $M/\mathfrak{I}^n(M)$ is equal to
the $j$-rank of $X_\alpha$ on $M$.
\end{lemma}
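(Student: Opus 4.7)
The plan is to give a proof that mirrors the argument for Lemma~\ref{jrankskernellemma}, but using images in the quotient $M/\mathfrak{I}^n(M)$ in place of kernels in the submodule $\mathfrak{K}^n(M)$. The starting observation is that for $j \leq n$ and any $\overline{\alpha} \in \mathbb{P}^{r-1}(k)$, one has $\Img(X_\alpha^n, M) \subseteq \Img(X_\alpha^j, M)$. Taking the intersection over $\overline{\beta} \in \mathbb{P}^{r-1}(k)$ then yields the crucial inclusion $\mathfrak{I}^n(M) \subseteq \Img(X_\alpha^n, M) \subseteq \Img(X_\alpha^j, M)$.

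Granted this inclusion, the image of $X_\alpha^j$ acting on $M/\mathfrak{I}^n(M)$ is naturally identified with $\Img(X_\alpha^j, M)/\mathfrak{I}^n(M)$, whose $k$-dimension is $\rank(X_\alpha^j, M) - \dim_k \mathfrak{I}^n(M)$. Since the correction term is independent of $\overline{\alpha}$, this delivers the asserted equality of $j$-ranks in exactly the same sense as in the companion Lemma~\ref{jrankskernellemma} (where the nullities agreed and the ambient dimensions contributed a constant).

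If one prefers a more formal route, Proposition~\ref{genericnkernelduality} supplies isomorphisms $(M/\mathfrak{I}^n(M))^\# \cong \mathfrak{I}^n(M)^\perp \cong \mathfrak{K}^n(M^\#)$. Because the rank of a linear endomorphism is preserved by $k$-linear duality (the dual map is the transpose), Lemma~\ref{jrankskernellemma} applied to $M^\#$ immediately gives the claim. Either route is routine; the only real point to keep in mind is the hypothesis $j \leq n$, which is what guarantees that $\mathfrak{I}^n(M)$ is absorbed into every $\Img(X_\alpha^j, M)$ (respectively, that $\Ker(X_\alpha^j, M)$ sits inside $\mathfrak{K}^n(M)$ in the dual picture). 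I do not anticipate any genuine obstacle beyond unpacking this absorption.
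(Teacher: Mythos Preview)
Your proposal is correct. Your second, ``more formal route''---dualising and applying Lemma~\ref{jrankskernellemma} to $M^\#$ via Proposition~\ref{genericnkernelduality}---is exactly the paper's own one-line proof. Your first, direct argument (observing that $\mathfrak{I}^n(M)\subseteq\Img(X_\alpha^n,M)\subseteq\Img(X_\alpha^j,M)$ for $j\leq n$, so that $\Img(X_\alpha^j,M/\mathfrak{I}^n(M))=\Img(X_\alpha^j,M)/\mathfrak{I}^n(M)$) is a valid elementary alternative that mirrors the proof of Lemma~\ref{jrankskernellemma} without passing through duality. You are also right to flag that, just as in Lemma~\ref{jrankskernellemma}, the literal ranks agree only up to an $\alpha$-independent constant; this is harmless for the intended application, namely the preservation of the constant $j$-rank property.
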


\begin{proof}
This follows from Lemma \ref{jrankskernellemma} and the duality formula
\ref{genericnkernelduality}.
\end{proof}

We are now ready to prove our main result.

\begin{theorem}
If $M$ has constant Jordan type and $i\leq\min\{n-1,m-1\}$, then
\[
\mathcal{F}_i(M)\cong
\mathcal{F}_i(\mathfrak{K}^n(M)/\mathfrak{I}^m\mathfrak{K}^n(M))
\qquad\text{and}\qquad
\mathcal{F}_i(M)\cong\mathcal{F}_i(\mathfrak{K}^n(M/\mathfrak{I}^m(M))).
\]
\end{theorem}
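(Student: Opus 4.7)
The plan is to derive both isomorphisms by chaining the two reduction results just established in this section: Proposition \ref{Fiongenericnkernel}, which replaces $M$ by its $n$th power generic kernel $\mathfrak{K}^n(M)$, and Corollary \ref{FionMmodgenericnimage}, which replaces $M$ by the quotient $M/\mathfrak{I}^m(M)$. The two asserted formulas for $\mathcal{F}_i(M)$ arise by applying these two reductions in opposite orders.

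For the first isomorphism, I would begin by applying Proposition \ref{Fiongenericnkernel} with parameter $n$ to obtain $\mathcal{F}_i(M) \cong \mathcal{F}_i(\mathfrak{K}^n(M))$; this is legitimate because $M$ has constant Jordan type and $i \leq n-1$. I would then apply Corollary \ref{FionMmodgenericnimage} with parameter $m$ to the module $\mathfrak{K}^n(M)$, yielding
\[
\mathcal{F}_i(\mathfrak{K}^n(M)) \cong \mathcal{F}_i\bigl(\mathfrak{K}^n(M)/\mathfrak{I}^m(\mathfrak{K}^n(M))\bigr),
\]
which needs $i \leq m-1$ together with suitable constant rank hypotheses on $\mathfrak{K}^n(M)$. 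For the second isomorphism, I would run the same two moves in the reverse order: first reduce via Corollary \ref{FionMmodgenericnimage} to $\mathcal{F}_i(M/\mathfrak{I}^m(M))$, and then apply Proposition \ref{Fiongenericnkernel} to the quotient $M/\mathfrak{I}^m(M)$ to arrive at $\mathcal{F}_i(\mathfrak{K}^n(M/\mathfrak{I}^m(M)))$.

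The main obstacle—and the only place any genuine work is required—is verifying that the constant rank hypotheses survive the first reduction in each chain so that the second reduction can legally be invoked. This is exactly what Lemmas \ref{jrankskernellemma} and \ref{jranksimagelemma} provide: the former says that $\mathfrak{K}^n(M)$ has the same $j$-rank function on $\mathbb{P}^{r-1}(k)$ as $M$ for every $j \leq n$, and the latter gives the analogous statement for $M/\mathfrak{I}^m(M)$ in the range $j \leq m$. Tracing through the proofs of Proposition \ref{Fiongenericnkernel} and Corollary \ref{FionMmodgenericnimage}, the constant rank conditions that are genuinely used are the $(i-1)$-, $i$- and $(i+1)$-ranks of the input module together with the rank matching the exponent in $\mathfrak{K}^{(\cdot)}$ or $\mathfrak{I}^{(\cdot)}$. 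Under the hypothesis $i \leq \min\{n-1, m-1\}$, all of these indices lie inside the range controlled by our two lemmas, so in each chain the intermediate module satisfies exactly the constant rank properties demanded by the next application, and both isomorphisms follow.
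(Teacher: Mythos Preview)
Your proposal is correct and follows essentially the same approach as the paper: both argue by composing Proposition \ref{Fiongenericnkernel} and Corollary \ref{FionMmodgenericnimage} in the two possible orders, invoking Lemmas \ref{jrankskernellemma} and \ref{jranksimagelemma} to propagate the constant rank hypotheses to the intermediate modules. Your account is in fact more detailed than the paper's two-sentence proof, which simply asserts that those lemmas guarantee constant $j$-rank for all $j\leq\min\{n,m\}$ on each intermediate module and then appeals to ``appropriate use'' of the proposition and corollary.
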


\begin{proof}
By Lemmas \ref{jrankskernellemma} and \ref{jranksimagelemma}, every
submodule and every quotient in the above formul{\ae} has constant
$j$-rank for all $j\leq\min\{n,m\}$. The proof therefore follows by
the appropriate use of Proposition \ref{Fiongenericnkernel} and
Corollary \ref{FionMmodgenericnimage}.
\end{proof}

Again, we specify the above theorem in its strongest form.

\begin{corollary}
If $M$ has constant Jordan type, then
\[
\mathcal{F}_i(M)\cong
\mathcal{F}_i(\mathfrak{K}^{i+1}(M)/\mathfrak{I}^{i+1}\mathfrak{K}^{i+1}(M))
\qquad\text{and}\qquad
\mathcal{F}_i(M)\cong
\mathcal{F}_i(\mathfrak{K}^{i+1}(M/\mathfrak{I}^{i+1}(M))).
\]
\end{corollary}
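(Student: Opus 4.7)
The corollary is an immediate specialisation of the preceding theorem, so the plan is simply to exhibit the correct choice of parameters. Set $n=m=i+1$ in the theorem. Then the hypothesis $i\leq\min\{n-1,m-1\}$ reduces to $i\leq i$, which holds trivially, so both isomorphisms of the theorem apply verbatim. Substituting these values for $n$ and $m$ in the two displayed formul\ae{} yields exactly the two isomorphisms claimed in the corollary.

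There is no real obstacle here: the content of the corollary is only that $i+1$ is the smallest value of $n$ (and of $m$) for which the theorem applies, so the corollary records the tightest available bound. Accordingly, the proof proposal is a single sentence: \emph{Apply the previous theorem with $n=m=i+1$.} No further argument is required, and in particular no fresh appeal to Proposition \ref{Fiongenericnkernel}, Corollary \ref{FionMmodgenericnimage}, or the duality of Proposition \ref{genericnkernelduality} is needed beyond what is already encapsulated in the theorem.
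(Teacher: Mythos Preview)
Your proposal is correct and matches the paper's treatment exactly: the paper gives no separate proof, merely stating that the corollary is the theorem ``in its strongest form,'' which is precisely the specialisation $n=m=i+1$ you describe.
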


We show in the following section how these results may be applied to
the rank two case in better understanding how to compute the bundles
$\mathcal{F}_i(M)$ by restricting to subquotients of $M$ having smaller
Loewy length.

\section{Some examples and applications}

It is now time to apply the somewhat technical results of the previous
sections to some elementary, concrete examples. We shall focus our
attention on the case $r=2$, which in some sense is easier to work with.

It was shown in \cite{Carlson/Friedlander/Suslin:2011a} that if
$M$ has constant rank, then $\mathfrak{K}^n(M)$ is contained in 
$J^{-n+1}\KM$. This means that we have a series of inclusions
\[
\begin{array}{ccccccccc}
\KM & \subseteq & J^{-1}\KM & \subseteq & J^{-2}\KM & \subseteq & \cdots &
\subseteq & J^{-p+1}\KM \\
\rotatebox{90}{$=\,$} & & \rotatebox{90}{$\subseteq\,$} & &
\rotatebox{90}{$\subseteq\,$} & & & & \rotatebox{90}{$=\,$} \\
\mathfrak{K}^1(M) & \subseteq & \mathfrak{K}^2(M) & \subseteq &
\mathfrak{K}^3(M) & \subseteq & \cdots &
\subseteq & \mathfrak{K}^p(M)=M
\end{array}
\]
and dually
\[
\begin{array}{ccccccccc}
J\KM & \supseteq & J^2\KM & \supseteq & J^3\KM & \supseteq & \cdots &
\supseteq & J^p\KM \vspace{-0.1in}\\
\rotatebox{-90}{$=\ $} & & \rotatebox{270}{$\subseteq$} & &
\rotatebox{270}{$\subseteq$} & & & & \rotatebox{270}{$=$} \\
\mathfrak{I}(M) & \supseteq & \mathfrak{I}^2(M) & \supseteq &
\mathfrak{I}^3(M) & \supseteq & \cdots &
\supseteq & \mathfrak{I}^p(M)=0
\end{array}
\]
In general, the vertical inclusions can be strict, i.e.,
$\mathfrak{K}^n(M)/\mathfrak{I}^m\mathfrak{K}^n(M)$ and
$\mathfrak{K}^n(M/\mathfrak{I}^m(M))$
tend to be strictly smaller subquotients of $J^{-n+1}\KM/J^m\KM$.

\begin{example}
Let $p=3$ and $M$ the $kE$-module given by the following diagram.
\[
\xymatrix{
& \bullet \ar@{-}[dl] \ar@{=}[dr] & & \bullet \ar@{-}[dl] \ar@{=}[dr] & &
\bullet \ar@{-}[dl] \ar@{=}[dr] & & \bullet \ar@{-}[dl] \ar@{=}[dr] & &
\bullet \ar@{-}[dl] \ar@{=}[dr] \\
\bullet \ar@{=}[dr] & & \bullet \ar@{-}[dl] \ar@{=}[dr] & &
\bullet \ar@{-}[dl] \ar@{=}[dr] & & \bullet \ar@{-}[dl] \ar@{=}[dr] & &
\bullet \ar@{-}[dl] \ar@{=}[dr] & & \bullet \ar@{-}[dl] \\
& \bullet & & \bullet & & \bullet & & \bullet & & \bullet
}
\]
One may calculate that $M$ is a module of constant Jordan type
$[3]^4[2]^2$. Moreover,
\[
M=J^{-1}\KM/J^2\KM.
\]
On the other hand, $\mathfrak{K}^2(M)$ is given by the diagram
\[
\xymatrix{
& \bullet \ar@{-}[dl] \ar@{=}[dr] & & \bullet \ar@{-}[dl] \ar@{=}[dr] & &
& & \bullet \ar@{-}[dl] \ar@{=}[dr] & &
\bullet \ar@{-}[dl] \ar@{=}[dr] \\
\bullet \ar@{=}[dr] & & \bullet \ar@{-}[dl] \ar@{=}[dr] & &
\bullet \ar@{-}[dl] \ar@{=}[dr] & & \bullet \ar@{-}[dl] \ar@{=}[dr] & &
\bullet \ar@{-}[dl] \ar@{=}[dr] & & \bullet \ar@{-}[dl] \\
& \bullet & & \bullet & & \bullet & & \bullet & & \bullet
}
\]
and $\mathfrak{K}^2(M)/\mathfrak{I}^2\mathfrak{K}^2(M)$ has diagram
\[
\xymatrix{
& \bullet \ar@{-}[dl] \ar@{=}[dr] & & \bullet \ar@{-}[dl] \ar@{=}[dr] & &
& & \bullet \ar@{-}[dl] \ar@{=}[dr] & &
\bullet \ar@{-}[dl] \ar@{=}[dr] \\
\bullet \ar@{=}[dr] & & \bullet \ar@{-}[dl] \ar@{=}[dr] & &
\bullet \ar@{-}[dl] & & \bullet \ar@{=}[dr] & &
\bullet \ar@{-}[dl] \ar@{=}[dr] & & \bullet \ar@{-}[dl] \\
& \bullet & & \bullet & & & & \bullet & & \bullet
}
\]
One may check that the latter diagram is also that for
$\mathfrak{K}^2(M/\mathfrak{I}^2(M))$.
\end{example}

It is easy to check that both constructions
$\mathfrak{K}^n(-)/\mathfrak{I}^m\mathfrak{K}^n(-)$ and
$\mathfrak{K}^n(-/\mathfrak{I}^m(-))$ are functorial. 
We do not know of an example in which they are not naturally isomorphic.

\begin{question}
If $M$ is any $kE$-module, is it always the case that
\[
\mathfrak{K}^n(M)/\mathfrak{I}^m\mathfrak{K}^n(M)\cong
\mathfrak{K}^n(M/\mathfrak{I}^m(M))?
\]
If not, then is the statement true whenever $M$ has constant Jordan type,
for example?
\end{question}

We now show how the results of the paper may help us compute
$\mathcal{F}_1(M)$ in an extremely efficient way.

\begin{example}\label{mainexample}
Consider the seven dimensional $kE$-module $M$ given by the diagram
\[
\xymatrix{
& & & \bullet \ar@{-}[dl] \ar@{=}[dr] & & & \\
\bullet \ar@{=}[dr] & & \bullet \ar@{-}[dl] & & \bullet \ar@{=}[dr] & &
\bullet \ar@{-}[dl]\\
& \bullet & & & & \bullet &
}
\]
This first appeared in \cite{Carlson/Friedlander/Suslin:2011a}. To
show how the results of this paper make the computation of $\mathcal{F}_1(M)$
almost trivial, note that $\mathfrak{K}^2(M)$ is the submodule given by
the diagram
\[
\xymatrix{
\bullet \ar@{=}[dr] & & \bullet \ar@{-}[dl] & & \bullet \ar@{=}[dr] & &
\bullet \ar@{-}[dl]\\
& \bullet & & & & \bullet &
}
\]
By the discussion in Section \ref{Wmodulesection}, one sees that this
is just two copies of the $W$-module $W_{2,2}$.
It now follows from Proposition \ref{Fiongenericnkernel} and
Theorem \ref{wmodulevectorbundles} that
\[
\mathcal{F}_1(M)=\mathcal{F}_1(\mathfrak{K}^2(M))\cong
\mathcal{F}_1(W_{2,2})\oplus\mathcal{F}_1(W_{2,2})\cong
\mathcal{O}_{\mathbb{P}^1}(-1)\oplus\mathcal{O}_{\mathbb{P}^1}(-1).
\]

Dually, consider the $k$-linear dual $M^\#$, whose structure
is given by the dual diagram
\[
\xymatrix{
& \bullet \ar@{-}[dl] \ar@{=}[dr] & & & & \bullet \ar@{-}[dl] \ar@{=}[dr]
& \\
\bullet & & \bullet \ar@{=}[dr] & & \bullet \ar@{-}[dl] & & \bullet \\
& & & \bullet & & &
}
\]
In this case, $\mathfrak{K}^2(M/\mathfrak{I}^2(M))\cong M/\mathfrak{I}^2(M)$,
which is given by the diagram
\[
\xymatrix{
& \bullet \ar@{-}[dl] \ar@{=}[dr] & & & & \bullet \ar@{-}[dl] \ar@{=}[dr]
& \\
\bullet & & \bullet & & \bullet & & \bullet \\
}
\]
This is now two copies of the dual $W$-module $W_{n,2}^\#$. From this we
see that
\[
\mathcal{F}_1(M^\#)=\mathcal{F}_1(M/\mathfrak{I}^2(M))\cong
\mathcal{F}_1(W_{2,2}^\#)\oplus\mathcal{F}_1(W_{2,2}^\#)\cong
\mathcal{O}_{\mathbb{P}^1}(1)\oplus\mathcal{O}_{\mathbb{P}^1}(1).
\]
\end{example}

\section{A discussion about vector bundles in rank two}

Let $E=\mathbb{Z}/p\times\mathbb{Z}/p$ and recall that for $p>2$, $kE$
has wild representation type. What's more, it was shown by Benson
that the subcategory $\cJt(kE)$ of modules of constant Jordan type
is also wild. On the other hand, we know that the functor
$\mathcal{F}_1\colon\cJt(kE)\rightarrow\vect(\mathbb{P}^1(k))$ is
essentially surjective, and the right hand side is certainly tame by
Grothendieck's theorem. One of the original goals of this project
was to find a structural invariant $G(M)$ of $M$ landing in
some tame subcategory of $\fgmod(kE)$ such that
$\mathcal{F}_1(M)\cong\mathcal{F}_1(G(M))$. The functorial relationship
between modules of constant Jordan type and vector bundles would then
reduce to one between two tame categories.

It was shown in Corollary \ref{stronggenkernelandFi} that
$\mathcal{F}_1(M)\cong\mathcal{F}_1(J^{-1}\KM/J^2\KM)$, so one might
hope that the modules $M$ for which $J^{-1}\KM=M$ and $J^2\KM=0$
form a tame subcategory, or at least those $M$ of the form
$J^{-1}\mathfrak{K}(N)/J^2\mathfrak{K}(N)$ for some module $N$ of
constant Jordan type. This
might also appear somewhat plausible due to the fact that such modules have
Loewy length only three.

Unfortunately, Benson has also shown that the category of modules $M$
having constant Jordan type satisfying $J^{-1}\KM=M$ and $J^2\KM=0$
is in fact wild. (See Section 5 of \cite{Benson:book} for details.)
Our investigation has led us to consider
the functors $\mathfrak{K}^2(-)/\mathfrak{I}^2\mathfrak{K}^2(-)$
and $\mathfrak{K}^2(-/\mathfrak{I}^2(-))$ as more likely candidates.
As can be seen from Example \ref{mainexample}, what appears to happen
is that both $\mathfrak{K}^2(M)/\mathfrak{I}^2\mathfrak{K}^2(M)$ and
$\mathfrak{K}^2(M/\mathfrak{I}^2(M))$ break up into direct sums of
$W$-modules (or duals of $W$-modules) having Loewy length at most two, the
decomposition of which allows one to instantly calculate $\mathcal{F}_1(M)$
using Theorem \ref{wmodulevectorbundles}. Although these sub-
quotients might still contain direct summands of Loewy length three,
we conjecture that such summands contribute nothing to $\mathcal{F}_1(M)$.
We make this more precise.

\begin{conjecture}
If $M$ is a $k(\mathbb{Z}/p)^2$-module of constant Jordan type and $N$ is
any inde-
composable direct summand of
$\mathfrak{K}^2(M)/\mathfrak{I}^2\mathfrak{K}^2(M)$
or $\mathfrak{K}^2(M/\mathfrak{I}^2(M))$ that has Loewy length three, then
$\mathcal{F}_1(N)$ is the zero sheaf on $\mathbb{P}^1(k)$.
\end{conjecture}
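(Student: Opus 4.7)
The plan is to reduce the conjectured vanishing $\mathcal{F}_1(N) = 0$ to a fibrewise linear-algebra statement that can be attacked via the equal images property. Since the functor $\mathcal{F}_1$ is additive on direct sums (the operator $\theta$ respects direct sums, and both kernels and images do as well), we may restrict attention to a single indecomposable Loewy-length-three summand $N$. Unwinding the definition, the claim is equivalent to the inclusion $\Ker\theta_N \subseteq \Img\theta_N$ of subsheaves of $\widetilde{N}$. After verifying that $N$ inherits the constant-rank properties needed for local freeness (via Lemmas \ref{firstlemma} and \ref{kernelcapimage}), we can test this on fibres: it suffices to prove $\Ker(X_\alpha, N) \subseteq \Img(X_\alpha, N)$ for every $\overline{\alpha}\in\mathbb{P}^1(k)$.

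Because $N$ has Loewy length three, $\Soc(N) = \Rad^2(N)$, and the fibrewise kernel elements not obviously in $\Img(X_\alpha, N)$ are precisely those in the socle. The problem thus reduces to showing $\Soc(N) \subseteq X_\alpha\cdot\Rad(N)$ for every non-zero $\alpha$, which would follow at once if $\Rad(N)$ has the equal images property with common image $\Soc(N)$. So the crux of the argument is to establish the equal images property on $\Rad(N)$.

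My approach would be to try to inherit this property from the ambient subquotient. In $\mathfrak{K}^2(M)$ every element is a sum of vectors annihilated by various $X_\beta^2$, so elements of $\Rad(\mathfrak{K}^2(M))$ are spanned by products $X_\beta v$ with $v\in\Ker(X_\beta^2,M)$; the quotient by $\mathfrak{I}^2\mathfrak{K}^2(M)$ is designed precisely to cancel the discrepancy between different choices of $\beta$. Concretely, I would attempt to show that $X_\alpha m \equiv X_\beta m \pmod{\mathfrak{I}^2\mathfrak{K}^2(M)}$ for appropriate $m \in \mathfrak{K}^2(M)$ and all non-zero $\alpha, \beta$, so that the radical of $\mathfrak{K}^2(M)/\mathfrak{I}^2\mathfrak{K}^2(M)$ carries the equal images property; Proposition \ref{eipproperties} would then let this descend to the direct summand $\Rad(N)$. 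The case of $\mathfrak{K}^2(M/\mathfrak{I}^2(M))$ should follow by the standard duality argument, combining Lemmas \ref{vectorbundlesofduals} and \ref{generickernelduality}.

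The main obstacle is this inheritance step: the equal images property is well behaved under quotients, radicals, and direct sums, but the composite operation ``take a direct summand of the radical of a subquotient'' is delicate, and the explicit structure of indecomposable Loewy-length-three summands inside $\mathfrak{K}^2(M)/\mathfrak{I}^2\mathfrak{K}^2(M)$ is not transparent. If the direct inheritance argument fails, a fallback is to combine additivity of $\mathcal{F}_1$ with Proposition \ref{filtrationproposition} and Theorem \ref{wmodulevectorbundles}: the sums of $W_{n,2}$ and $W_{n,2}^\#$ appearing as Loewy-length-two summands contribute a specific sum of line bundles whose total rank equals the rank of $\mathcal{F}_1(M)$, so on rank grounds the remaining Loewy-length-three summands must contribute locally free sheaves of rank zero, i.e.\ the zero sheaf. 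Executing this plan rigorously would require a careful classification of the possible Loewy-length-three indecomposables in these subquotients and a verification that the rank count is indeed exhausted by the Loewy-length-at-most-two piece.
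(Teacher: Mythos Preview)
The statement you are attempting to prove is recorded in the paper as a \emph{Conjecture}, not as a theorem; the authors offer no proof and explicitly present it as the open problem motivating the final section. There is therefore no ``paper's own proof'' against which your proposal can be compared.

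As to the proposal itself, a few genuine gaps are worth flagging. First, your fibrewise reduction presupposes that the summand $N$ has constant $1$- and $2$-rank so that $\Ker\theta_N$ and $\Img\theta_N$ are locally free and Lemma~\ref{incllemma} applies; nothing in the paper guarantees that an arbitrary indecomposable direct summand of $\mathfrak{K}^2(M)/\mathfrak{I}^2\mathfrak{K}^2(M)$ inherits these properties from $M$, and without them the fibre test is not valid. Second, your claimed reduction to the socle is incomplete: establishing that $\Rad(N)$ has the equal images property with image $\Soc(N)$ only yields $\Soc(N)\subseteq\Img(X_\alpha,N)$, whereas $\Ker(X_\alpha,N)$ can (and for Loewy length three typically does) contain elements outside $\Soc(N)$; you would still need to show those lie in $\Img(X_\alpha,N)$. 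Third, the fallback rank-counting argument assumes that the Loewy-length-at-most-two summands are direct sums of modules of the form $W_{n,2}$ and $W_{n,2}^\#$ whose $\mathcal{F}_1$ ranks exhaust $\rank\mathcal{F}_1(M)$; this is exactly the heuristic the authors describe in the paragraph preceding the conjecture, and turning it into a proof would amount to proving the conjecture itself.
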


\bibliographystyle{amsplain}
\bibliography{repcoh.bib}

\end{document}